\newtheorem{thm}{Theorem}[section]
\newtheorem{lem}{Lemma}[section]
\newtheorem{prop}{Proposition}[section]
\newcommand{\C}{\mathcal{C}}
\newcommand{\Cexp}{\mathcal{C}_{\mathrm{exp}}}
\newcommand{\Cimp}{\mathcal{C}_{\mathrm{imp}}}
\newcommand{\hFE}{h_\textup{FE}}
\newcommand{\Real}{\mathbb R}
\newcommand{\Comp}{\mathbb C}
\newcommand{\transp}{\top}
\DeclareMathOperator*{\rank}{rank}
\renewcommand{\Re}{\operatorname{Re}}
\renewcommand{\Im}{\operatorname{Im}}
\providecommand{\abs}[1]{\lvert#1\rvert}
\providecommand{\conj}[1]{\bar{#1}}
\providecommand{\floorM}[1]{\left\lfloor#1\right\rfloor}
\providecommand{\diff}{\mathop{}\!\mathrm{d}}
\title{Existence and optimality of strong stability preserving linear multistep methods: a duality-based approach}
\author{Adri{\'a}n N{\'e}meth \and David I. Ketcheson\footnote{Author email addresses: \texttt{nemetha@sze.hu}, \texttt{david.ketcheson@kaust.edu.sa}.
This work was supported by Award No.\ FIC/2010/05 - 2000000231,
made by King Abdullah University of Science and Technology; and by
T\'AMOP-4.2.2.A-11/1/KONV-2012-0012:
Basic research for the development of hybrid and electric vehicles - The Project is
supported by the Hungarian Government and co-financed by the European Social Fund.
}}
\begin{document}
\maketitle

\begin{abstract}
We prove the existence of explicit linear multistep methods of 
any order with positive coefficients.  Our approach is based on
formulating a linear programming problem and establishing infeasibility
of the dual problem.  This yields a number of other theoretical 
advances.
\end{abstract}

\section{Introduction}
In this work we study numerical methods for the solution of the initial
value problem (IVP)
\begin{align} \label{ivp}
    u'(t) & = f(t,u) & u(t_0) & = u_0 & t_0\le t \le T,
\end{align}
under the assumption that the solution is monotone in time:
\begin{align} \label{mono-cont}
    \|u(t+h)\| & \le \|u(t)\| & \forall h \ge 0.
\end{align}
Here $u : \Real \to \Real^m$, and $\|\cdot\|$ is any convex functional.  The theory pursued herein is also
relevant when $u$ satisfies a contractivity or positivity condition.  We will usually
write $f(u)$ instead of $f(t,u)$ merely to keep the notation simpler.

We focus on the class of methods for solving \eqref{ivp} that are known as
linear multistep methods (LMMs).  To solve \eqref{ivp} by a linear multistep
method, we define a sequence of times $t_0 < t_1 < \ldots < t_N = T$ where $t_n := t_{n-1} + h$,
and compute the values $u_n\approx u(t_n)$ sequentially by
\begin{align} \label{lmm}
  u_n = \sum_{j=1}^{k} \alpha_j u_{n-j} + h \sum_{j=0}^k \beta_j f(u_{n-j}).
\end{align}
Some prescription must be given for the starting
values $u_0, \dots, u_{k-1}$.
If $\beta_0=0$, the method is said to be explicit; otherwise it is implicit.

We are interested in methods that preserve a discrete version of the monotonicity
condition \eqref{mono-cont}; namely
\begin{align} \label{monotonicity}
    \|u_n\| \le \max\left\{ \|u_{n-1}\|,\dots,\|u_{n-k}\| \right\}.
\end{align}
The backward Euler method achieves \eqref{monotonicity} under any step
size, as long as $f$ is such that \eqref{mono-cont} is satisfied.

In order to achieve the discrete monotonicity property \eqref{monotonicity} with explicit methods, or with
implicit methods of order higher than one, we assume a
stronger condition than \eqref{mono-cont}.  We require that $f$ be monotone
under a forward Euler step, with some restriction on the step size:
\begin{align} \label{forward-Euler}
    \|u + h f(u)\| & \le \|u\| & \forall 0 \le h \le \hFE(u).
\end{align}
Under assumption \eqref{forward-Euler} it can be shown that any method
\eqref{lmm} with non-negative coefficients preserves discrete monotonicity
\eqref{monotonicity} under some time step restriction.  The step size
restriction that guarantees monotonicity is then
\begin{align*}
    0 \le h \le \C \hFE,
\end{align*}
where the factor
\begin{align}
  \C :=
  \begin{cases}
    \min\limits_{\substack{1 \le j \le k\\ \beta_j \ne 0}} \alpha_j/\beta_j &
    \text{ if } \alpha_j \ge 0 \text{ for } 1 \le j \le k \text{ and }
    \beta_j \ge 0 \text{ for } 0 \le j \le k,\\
    0 & \text{ otherwise}
  \end{cases}
\end{align}
is known as the {\em threshold factor} or {\em strong stability preserving
coefficient} (SSP coefficient) of the method.

Runge--Kutta methods with positive SSP coefficient are subject to restrictive
order barriers: explicit methods have order at most four and implicit methods
have order at most six.  SSP linear multistep methods of high order are therefore
of particular interest.
The objective of the present work is to investigate existence of methods
with $\C>0$ and bounds on the value of $\C$ for methods with a prescribed
order of accuracy and number of steps.

In this paper we have assumed a fixed step size;
the case of SSP explicit linear multistep methods with variable step size is covered in \cite{VSSLMMpaper}.

\subsection{Previous work}
Contractive linear multistep methods were studied by Sand,
who constructed a family of implicit methods with arbitrarily high order
\cite[Theorem~2.3]{Sand_1986} whose number of steps is exponential in the
order of accuracy.
Later Lenferink deduced many properties of
the optimal methods and threshold factors for low order methods \cite{Lenferink_1989,Lenferink_1991}.  
More recently, such methods have been studied by Hundsdorfer \& Ruuth 
\cite{Hundsdorfer_Ruuth_2005,Ruuth_Hundsdorfer_2005}, who consider the 
effect of special starting procedures and of requiring only boundedness
rather than strict contractivity or monotonicity.  A fast algorithm for
computing optimal methods, along with extensive results, was given in
\cite{Ketcheson_2009}.

\subsection{Scope and main results}
Linear multistep methods are closely related to polynomial interpolation
formulas.  We investigate their properties herein using the framework of
linear programming and results on polynomial interpolants.

The main results proved in the present work are:
\begin{enumerate}
    \item Existence of arbitrary order explicit LMMs with $\C>0$;
    \item A sharper upper bound on $\C$ for implicit LMMs;
    \item Behavior of the optimal value of $\C$ for $k$-step methods as $k\to \infty$.
\end{enumerate}
Along the way, we also give a new proof of the known upper bound on $\C$ for
explicit LMMs, and a new relation between $\C$ for certain implicit and explicit
classes of LMMs.

\section{Formulation as a linear programming feasibility problem}
By replacing $u_n$ with the exact solution $u(t_n)$ in \eqref{lmm} and expanding
terms in Taylor series about $t_{n}$, we obtain the following conditions for method
\eqref{lmm} to be consistent of order $p$:
\begin{subequations}
\begin{align}
  \sum_{j=1}^{k} \alpha_j & = 1 \label{primal:oc1}\\
  \sum_{j=1}^{k} \left(j^m \alpha_j - m j^{m-1} \beta_j\right) - m 0^{m-1}\beta_0& = 0 & 1 \le m \le p;\label{primal:oc2}
\end{align}
\end{subequations}
here and in the rest of the paper, we assign $0^0$ the value $1$.

A linear multistep method \eqref{lmm} has SSP coefficient at least equal to $r>0$ iff
\begin{equation*}
  \beta_j\geq 0 \text{ and } \alpha_j-r\beta_j \geq 0 \text{ for } 1 \leq j\leq k, \text{ and } \beta_0 \ge 0.
\end{equation*}
Thus the problem of whether there exists a method of order $p$ with $k$ steps and 
SSP coefficient at least $r > 0$ can be formulated for explicit methods as \cite[LP 2]{Ketcheson_2009}:
\begin{align}
  \text{Find}\label{lp:primal:exp}\\
  \beta_j \ge 0 \quad \delta_j & \ge 0 && \text{ for } 1\le j \le k\notag\\
  \text{ such that }\notag\\
  \sum_{j=1}^{k}\left( (\delta_j + r\beta_j)j^m - \beta_j m j^{m-1} \right) & = 0^m && \text{ for } 0\le m\le p;\notag
\end{align}
and for implicit methods as \cite[LP 4]{Ketcheson_2009}:
\begin{align}
  \text{Find}\label{lp:primal:imp}\\
  \beta_0 \geq 0 \quad \text{and}\quad \beta_j \ge 0\quad \delta_j &\ge 0 & &\text{ for } 1\le j \le k\notag\\
  \text{ such that }\notag\\
  \sum_{j=1}^{k} (\delta_j + r\beta_j) & = 1 && \text{ and }\notag\\
  \sum_{j=1}^{k}\left( (\delta_j + r\beta_j)j^m - \beta_j m j^{m-1} \right) - \beta_0 0^{m-1}& = 0 & &\text{ for } 1\le m\le p.\notag
\end{align}
In both cases, $\alpha_j$ can be obtained for a fixed $r$ via $\alpha_j = \delta_j + r\beta_j$
for all $1 \le j \le k$.

\section{Optimality conditions for SSP methods}
In this section we develop the basic tools used in this paper. 
Our analysis relies on Farkas' lemma and on the Duality and Complementary
slackness theorems in linear programming (LP), which we now recall; see, e.g.
\cite{schrijver1998theory}.

\begin{prop}[Farkas' lemma]
  \label{prop:farkas}
  Let $A\in\Real^{m\times n}$ be a matrix and let $b\in\Real^m$ be a vector.
  The system $Ax = b, x \geq 0$ is feasible if and only if the system
  $A^\top y \ge 0, b^\top y < 0$ is infeasible.
\end{prop}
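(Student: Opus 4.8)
The plan is to prove the two directions separately, reserving the separating hyperplane theorem for the substantive implication and handling the remaining direction by a direct computation.

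First I would dispatch the easy direction: the two systems cannot both be feasible. If $x \ge 0$ satisfies $Ax = b$ and $y$ satisfies $A^\top y \ge 0$, then $b^\top y = (Ax)^\top y = x^\top(A^\top y) \ge 0$, since $x$ and $A^\top y$ are both entrywise nonnegative. This contradicts $b^\top y < 0$. Hence feasibility of $Ax = b,\ x \ge 0$ forces infeasibility of $A^\top y \ge 0,\ b^\top y < 0$, which is one half of the equivalence.

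For the converse I would argue contrapositively, assuming $Ax = b,\ x \ge 0$ is infeasible and constructing a certificate $y$. Consider the cone $C = \{Ax : x \in \Real^n,\ x \ge 0\}$ generated by the columns of $A$; it is a convex set, and infeasibility says precisely that $b \notin C$. The separating hyperplane theorem then supplies a vector $y$ with $b^\top y < \gamma \le z^\top y$ for all $z \in C$ and some scalar $\gamma$. Because $C$ is a cone containing the origin, scaling $z$ by $\lambda \to \infty$ shows $z^\top y \ge 0$ for every $z \in C$, and taking $z = 0$ forces $\gamma \le 0$, so in fact $b^\top y < 0$. Letting $z$ range over the columns $A e_j$ gives $A^\top y \ge 0$, the desired dual certificate.

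The main obstacle is that the strict separation I use requires $C$ to be closed, and the closedness of a finitely generated cone is the one genuinely nontrivial point. I would establish it by a Carath\'eodory-type reduction: every element of $C$ can be written as a nonnegative combination of a \emph{linearly independent} subset of the columns of $A$. Since there are only finitely many such subsets, $C$ is a finite union of images of the nonnegative orthant under linear maps of full column rank, each of which is closed; a finite union of closed sets is closed. With closedness in hand, the separation step goes through and completes the proof.
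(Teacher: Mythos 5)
The paper never proves this proposition: it is stated as a classical tool and justified only by a citation to Schrijver's textbook, so there is no internal argument to compare yours against. Your proposal is a correct and essentially complete proof along one of the standard routes. The easy direction, $b^\top y = (Ax)^\top y = x^\top (A^\top y) \ge 0$ whenever $x \ge 0$ and $A^\top y \ge 0$, is exactly right, and the substantive direction via strict separation of $b$ from the cone $C = \{Ax : x \ge 0\}$ is sound: the scaling argument correctly converts the separation inequality into $z^\top y \ge 0$ on $C$ together with $b^\top y < 0$, and specializing $z$ to the columns of $A$ yields $A^\top y \ge 0$. Most importantly, you identify the one genuinely nontrivial point --- that strict separation requires $C$ to be closed, which is not automatic for images of closed sets under linear maps --- and your Carath\'eodory-type reduction (every point of $C$ is a nonnegative combination of a linearly independent subset of columns, so $C$ is a finite union of closed images of orthants under injective linear maps) is the standard and correct way to settle it; the only sentence worth adding in a polished write-up is why an injective linear map sends the closed orthant to a closed set (e.g.\ $\lVert Bt\rVert \ge c\lVert t\rVert$ for some $c>0$ bounds preimages of convergent sequences). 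In terms of trade-offs: the paper's choice to cite rather than prove keeps its focus on the SSP linear multistep results, where Farkas' lemma is pure infrastructure, whereas your proof makes the foundation self-contained at the cost of importing the separating hyperplane theorem and a small amount of convex-geometric bookkeeping.
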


\begin{prop}[Duality theorem in linear programming]
  \label{prop:lp-duality}
  Let $A \in \Real^{m\times n}$ be a matrix and let $b \in \Real^m, c \in \Real^n$ be vectors.
  Consider the primal-dual pair of LP problems
  \begin{subequations}\label{eq:dual-primal}
  \begin{align}
    \text{Maximize } c^\top x &\text{ subject to } Ax = b \text{ and } x\ge 0,\label{eq:primal}\\
    \text{Minimize } b^\top y &\text{ subject to } A^\top y \ge c.\label{eq:dual}
  \end{align}
  \end{subequations}
  If both problems are feasible, then the two optima are equal. 
\end{prop}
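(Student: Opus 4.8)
The plan is to derive strong duality from weak duality together with Farkas' lemma (Proposition~\ref{prop:farkas}), which is the standard route. First I would record weak duality: if $x$ is primal feasible ($Ax=b$, $x\ge 0$) and $y$ is dual feasible ($A^\top y \ge c$), then pairing the inequality $A^\top y \ge c$ with the nonnegative vector $x$ gives $c^\top x \le x^\top A^\top y = (Ax)^\top y = b^\top y$. Thus every primal objective value is bounded above by every dual objective value. Since the dual is assumed feasible, fixing one dual-feasible $y_0$ bounds the primal objective above by the finite number $b^\top y_0$; hence $\gamma := \sup\{\,c^\top x : Ax=b,\ x\ge 0\,\}$ is finite, and $\gamma$ is at most the dual optimum. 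This inequality is the easy half and requires no further machinery.

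The core of the argument is the reverse inequality: that the dual optimum is at most $\gamma$. By the definition of $\gamma$ as a supremum, for every $\epsilon>0$ the system $Ax=b$, $c^\top x \ge \gamma+\epsilon$, $x\ge 0$ is infeasible. I would recast this as an equality system in the nonnegative variables $(x,s)$ by introducing a slack $s\ge 0$ with $c^\top x - s = \gamma+\epsilon$, yielding the augmented matrix $\tilde A = \left(\begin{smallmatrix} A & 0 \\ c^\top & -1 \end{smallmatrix}\right)$ and right-hand side $\tilde b = (b,\ \gamma+\epsilon)^\top$. Infeasibility of $\tilde A z = \tilde b$, $z\ge 0$ lets me invoke Farkas' lemma in its contrapositive form: there exists a vector $w=(y,\lambda)$, with $y\in\Real^m$ and $\lambda\in\Real$, satisfying $\tilde A^\top w \ge 0$ and $\tilde b^\top w < 0$. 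Unwinding the block structure gives $A^\top y + \lambda c \ge 0$, together with $-\lambda \ge 0$ and $b^\top y + \lambda(\gamma+\epsilon) < 0$; writing $\mu := -\lambda \ge 0$, these read $A^\top y \ge \mu c$ and $b^\top y < \mu(\gamma+\epsilon)$.

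The step I expect to be the crux is ruling out the degenerate case $\mu=0$. If $\mu=0$ then $A^\top y \ge 0$ and $b^\top y < 0$, which by Farkas' lemma (applied to the original data $A,b$) certifies that $Ax=b$, $x\ge 0$ is infeasible --- contradicting the hypothesis that the primal is feasible. Hence $\mu>0$, and I may normalize by setting $\bar y := y/\mu$, which produces a genuinely dual-feasible point $A^\top \bar y \ge c$ with objective $b^\top \bar y < \gamma+\epsilon$. As $\epsilon>0$ was arbitrary, the dual optimum is $\le \gamma$; combined with weak duality this forces the dual optimum to equal $\gamma$, so the two optima coincide. The only genuine subtlety is the homogenization-and-normalization bookkeeping: arranging the augmented system so that Farkas' lemma applies in exactly the stated equality form, and using primal feasibility to secure $\mu>0$ so that the Farkas certificate can be scaled into a bona fide dual-feasible vector. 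That both optima are in fact attained then follows from feasibility together with the boundedness of the objectives established in the first paragraph.
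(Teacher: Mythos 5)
A preliminary remark on the comparison: the paper does not prove Proposition~\ref{prop:lp-duality} at all---it is recalled as a classical result with a citation to \cite{schrijver1998theory}---so there is no in-paper proof to measure you against. Your route is the standard textbook derivation of strong duality from Farkas' lemma, and it is the natural one here since the paper states Proposition~\ref{prop:farkas} immediately beforehand. The core of your argument is correct: weak duality gives $\sup\{c^\top x : Ax=b,\ x\ge 0\}\le\inf\{b^\top y : A^\top y\ge c\}$ and finiteness of $\gamma$; homogenizing $Ax=b$, $c^\top x\ge\gamma+\epsilon$, $x\ge0$ with one slack, applying Farkas to the augmented system, and using primal feasibility to rule out $\mu=0$ correctly produces a dual-feasible $\bar y$ with $b^\top\bar y<\gamma+\epsilon$, so the two optimal \emph{values} coincide.

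There is, however, a genuine gap in your last sentence. You assert that attainment of both optima ``follows from feasibility together with the boundedness of the objectives,'' but that inference is not valid as stated: minimizing $e^{-x}$ over $x\ge 0$ is feasible and bounded yet attains nothing, so for linear programs attainment is a theorem that uses the polyhedral structure, not a generic consequence of boundedness. Attainment is part of the content of the proposition as the paper uses it: Proposition~\ref{prop:compslack} speaks of optimal solutions $\tilde x$ and $\tilde y$ to \eqref{eq:primal} and \eqref{eq:dual}, so equality of suprema alone would not suffice. Fortunately your own machinery closes the gap. For the primal, rerun your homogenization with $\epsilon=0$: if $Ax=b$, $x\ge0$, $c^\top x\ge\gamma$ were infeasible, Farkas gives either a certificate $A^\top y\ge0$, $b^\top y<0$ (contradicting primal feasibility) or, after normalizing, a dual-feasible $\bar y$ with $b^\top\bar y<\gamma$ (contradicting weak duality); hence the supremum is attained. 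For the dual, write $A^\top y\ge c$, $b^\top y\le\gamma$ in standard equality form (split $y=u-v$ with $u,v\ge0$ and add slacks) and apply Farkas once more: the alternative system yields $x'\ge0$, $\theta'\ge0$ with $Ax'=\theta'b$ and $c^\top x'>\gamma\theta'$, and then $\theta'>0$ gives a primal point of value exceeding $\gamma$, while $\theta'=0$ gives a recession direction making the primal unbounded; both contradict what you already established. With those two additions your proof is complete.
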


\begin{prop}[Complementary slackness theorem]
  \label{prop:compslack}
  Let $A \in \Real^{m \times n}$ be a matrix and let $b \in \Real^m, c \in \Real^n$ be vectors.
  Consider the primal-dual pair of LP problems \eqref{eq:primal} and \eqref{eq:dual}.
  Assume that both optima are finite; let $\tilde{x}$ be an optimal solution to \eqref{eq:primal}
  and let $\tilde{y}$ be an optimal solution to \eqref{eq:dual}.
  Then $\tilde{x}^\transp (b-A^\top \tilde{y}) = 0$, i.e.\ if a component of $\tilde{x}$ is positive,
  then the corresponding inequality in $A^\transp y \ge c$ is satisfied by $\tilde{y}$ with equality.
\end{prop}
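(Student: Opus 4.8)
The plan is to derive the complementary slackness condition from strong duality by a sandwiching argument. First I would observe that, since both optima are assumed finite, both the primal \eqref{eq:primal} and the dual \eqref{eq:dual} are in particular feasible, so Proposition~\ref{prop:lp-duality} applies and gives the strong duality equality $c^\top \tilde x = b^\top \tilde y$. This equality is the only nontrivial input; everything else is bookkeeping with the sign constraints.

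Next I would assemble the chain of relations
\begin{equation*}
c^\top \tilde x \;\le\; (A^\top \tilde y)^\top \tilde x \;=\; \tilde y^\top (A\tilde x) \;=\; \tilde y^\top b \;=\; b^\top \tilde y .
\end{equation*}
Here the inequality uses dual feasibility $A^\top \tilde y \ge c$ together with primal sign feasibility $\tilde x \ge 0$, which together give $(A^\top \tilde y - c)^\top \tilde x \ge 0$; the first equality is the transpose identity $(A^\top \tilde y)^\top \tilde x = \tilde y^\top A \tilde x$; and the middle equality is the primal equality constraint $A\tilde x = b$. Comparing the two ends of this chain against the strong duality equality forces the single inequality in it to hold with equality, so that $(A^\top \tilde y - c)^\top \tilde x = 0$. (I note that this is the dimensionally consistent form of the asserted identity $\tilde x^\transp(b - A^\top\tilde y)=0$, matching the verbal statement about the constraints of $A^\top y \ge c$; writing it this way avoids a dimension mismatch between $b\in\Real^m$ and $A^\top\tilde y\in\Real^n$.)

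Finally, I would exploit the termwise nonnegativity. The quantity $(A^\top \tilde y - c)^\top \tilde x = \sum_j (A^\top \tilde y - c)_j\, \tilde x_j$ is a sum of products, each factor of which is nonnegative: $(A^\top \tilde y - c)_j \ge 0$ by dual feasibility and $\tilde x_j \ge 0$ by primal feasibility. A sum of nonnegative terms that vanishes must have every term equal to zero, whence $(A^\top \tilde y - c)_j\, \tilde x_j = 0$ for each $j$. In particular, $\tilde x_j > 0$ forces $(A^\top \tilde y)_j = c_j$, i.e.\ the $j$-th constraint of $A^\top y \ge c$ is active at $\tilde y$, which is exactly the claimed complementarity. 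I do not expect a genuine obstacle here: the entire content is the passage from strong duality through the sandwich to termwise complementarity, and the only point requiring care is the correct transposition and dimension tracking in forming the inner products.
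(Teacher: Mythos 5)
Your proof is correct, and there is nothing in the paper to compare it against: Proposition~\ref{prop:compslack} is recalled there as a known result with only a citation to a linear programming textbook, so the paper supplies no proof of its own. Your argument is exactly the standard textbook derivation that the citation points to. The chain $c^\top\tilde x \le (A^\top\tilde y)^\top\tilde x = \tilde y^\top A\tilde x = \tilde y^\top b = b^\top\tilde y$, collapsed to equality via the Duality theorem (Proposition~\ref{prop:lp-duality}, applicable since finiteness of both optima gives feasibility of both \eqref{eq:primal} and \eqref{eq:dual}), followed by the observation that a vanishing sum of nonnegative terms vanishes termwise, is complete and gap-free. A genuine merit of your write-up is that you spotted the dimensional inconsistency in the statement as printed: $\tilde x^\transp(b-A^\top\tilde y)$ pairs $\tilde x\in\Real^n$ with the difference of $b\in\Real^m$ and $A^\top\tilde y\in\Real^n$, which is ill-formed; the intended identity is $\tilde x^\transp(c-A^\top\tilde y)=0$, equivalently $(A^\top\tilde y-c)^\transp\tilde x=0$, which is what your proof establishes and what the verbal gloss in the proposition (activity of the constraints of $A^\transp y\ge c$ at positive components of $\tilde x$) actually asserts. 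That is a typo in the paper, and your corrected form is the right one.
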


Our analysis also relies on the following well-known result of Linear Algebra that can be found in many textbooks,
for instance, in \cite{spence2008elementary}.

\begin{prop}[Rouch\'e--Capelli theorem]
  \label{prop:rouchecapelli}
  Let $A \in \Real^{m \times n}$ be a matrix and let $b \in \Real^m$ be a vector.
  The system $Ax = b$ has a solution if and only if the rank of its coefficient matrix $A$
  is equal to the rank of its augmented matrix, $(A|b)$.
  If a solution exists, then the set of solutions forms an affine subspace of $\Real^n$ of dimension $(n-\rank{A})$.
\end{prop}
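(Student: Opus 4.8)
The plan is to reduce both assertions to standard facts about column spaces and the rank--nullity theorem. First I would observe that the system $Ax=b$ has a solution precisely when $b$ lies in the column space $\operatorname{col}(A)$ of $A$, since the image $\{Ax : x\in\Real^n\}$ is exactly the set of linear combinations of the columns of $A$. This recasts the solvability question as a membership question, which is the geometric heart of the statement.

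Next I would connect column-space membership to the rank condition. Because $(A\mid b)$ is obtained from $A$ by appending a single column, its column space is $\operatorname{col}(A)+\operatorname{span}\{b\}$, so one always has $\rank(A)\le\rank(A\mid b)\le\rank(A)+1$. The equality $\rank(A\mid b)=\rank(A)$ holds if and only if appending $b$ does not enlarge the column space, i.e.\ if and only if $b\in\operatorname{col}(A)$. Combining this with the first step yields the solvability criterion: $Ax=b$ is solvable iff $\rank(A)=\rank(A\mid b)$.

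For the dimension claim I would fix, under the solvability hypothesis, one particular solution $x_0$ with $Ax_0=b$. For any vector $x$ one then has $Ax=b$ iff $A(x-x_0)=0$, i.e.\ iff $x-x_0\in\ker(A)$; hence the full solution set equals the affine translate $x_0+\ker(A)$. This is an affine subspace whose dimension equals $\dim\ker(A)$, and by the rank--nullity theorem $\dim\ker(A)=n-\rank(A)$, which is the asserted dimension.

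Since every step here is a classical fact, I do not expect a genuine obstacle; the only point requiring mild care is the clean equivalence in the second step, namely that appending one column to $A$ changes the rank exactly when that column fails to lie in the span of the existing ones. Once the solution set is identified with a translate of the kernel, the dimension formula follows immediately from rank--nullity.
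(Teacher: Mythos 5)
Your proof is correct and is the standard textbook argument: solvability of $Ax=b$ is equivalent to $b$ lying in the column space of $A$, which is equivalent to the rank equality $\rank A = \rank(A|b)$, and once a particular solution $x_0$ exists the solution set is the translate $x_0+\ker(A)$, whose dimension is $n-\rank A$ by rank--nullity. The paper itself gives no proof of this proposition---it is quoted as a well-known linear-algebra fact with a textbook citation---so your argument simply supplies the canonical proof that the paper delegates to the literature.
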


We can now state three simple lemmas. Since the system \eqref{dual:lp:exp:a}--\eqref{dual:lp:exp:c} below is the Farkas-dual
of \eqref{lp:primal:exp}, and the system formed by \eqref{dual:lp:exp:a}--\eqref{dual:lp:exp:c}
and \eqref{dual:lp:imp:d} is the Farkas-dual of \eqref{lp:primal:imp},
Lemma \ref{lem:farkas:exp} and Lemma \ref{lem:farkas:imp} are consequences of
Farkas's lemma (Proposition \ref{prop:farkas}) and the observation that the dual
variables can be viewed as the coefficients of a polynomial of degree at most $p$.

\begin{lem}
  \label{lem:farkas:exp}
  Let $r\in \Real$ and let $k,p$ be positive integers.
  The following statements are equivalent:
  \begin{enumerate}[(i)]
    \item
      The LP feasibility problem \eqref{lp:primal:exp} is infeasible.
    \item
      There exist $y_m\in \Real$ for $0\le m \le p$ that satisfy the system
      \begin{subequations} \label{dual:lp:exp}
      \begin{align}
        \sum_{m=0}^p j^my_m & \geq 0 && \text{ for } 1\leq j \leq k \label{dual:lp:exp:a}\\
        \sum_{m=0}^p \left(-mj^{m-1}y_m+rj^my_m\right) & \geq 0 && \text{ for } 1\leq j \leq k \label{dual:lp:exp:b}\\
        y_0 & < 0. \label{dual:lp:exp:c}
      \end{align}
      \end{subequations}
    \item
      There exists a real polynomial $q$ of degree at most $p$ that satisfies the conditions
      \begin{subequations}\label{dual:poly:exp}
      \begin{align}
        q(j) & \ge 0 && \text{ for } 1\leq j \leq k \label{dual:poly:exp:a}\\
        -q'(j)+r\cdot q(j) & \ge 0 && \text{ for } 1\leq j \leq k \label{dual:poly:exp:b}\\
        q(0) & < 0. \label{dual:poly:exp:c}
      \end{align}
      \end{subequations}
  \end{enumerate}
\end{lem}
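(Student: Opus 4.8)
The plan is to prove the chain $(i)\Leftrightarrow(ii)\Leftrightarrow(iii)$. The first equivalence will be a direct application of Farkas' lemma (Proposition~\ref{prop:farkas}) to the feasibility problem \eqref{lp:primal:exp}, and the second will be nothing more than a change of viewpoint, in which the dual vector $(y_0,\dots,y_p)$ is read off as the coefficient sequence of a polynomial of degree at most $p$.

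First I would cast \eqref{lp:primal:exp} in the standard form $Ax=b$, $x\ge 0$, with variables $x=(\beta_1,\dots,\beta_k,\delta_1,\dots,\delta_k)^\top$, rows of $A$ indexed by $m=0,\dots,p$, and right-hand side $b=(0^0,0^1,\dots,0^p)^\top=(1,0,\dots,0)^\top$. Reading the coefficients from the order conditions, the column of $A$ attached to $\delta_j$ is $(j^m)_{m=0}^p$, and the column attached to $\beta_j$ is $(rj^m-mj^{m-1})_{m=0}^p$. Applying Farkas' lemma in the contrapositive form it provides, infeasibility of this system is equivalent to the existence of $y\in\Real^{p+1}$ with $A^\top y\ge 0$ and $b^\top y<0$. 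Computing $A^\top y$ column by column, the inequality from the $\delta_j$-column is exactly \eqref{dual:lp:exp:a}, the inequality from the $\beta_j$-column is exactly \eqref{dual:lp:exp:b}, and, since only the first entry of $b$ is nonzero, $b^\top y<0$ reduces to $y_0<0$, which is \eqref{dual:lp:exp:c}. This yields $(i)\Leftrightarrow(ii)$.

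For $(ii)\Leftrightarrow(iii)$ I would set $q(x):=\sum_{m=0}^p y_m x^m$ and use that this is a bijection between vectors $(y_0,\dots,y_p)$ and real polynomials of degree at most $p$. Then $q(j)=\sum_{m=0}^p j^m y_m$ and $q'(j)=\sum_{m=0}^p m j^{m-1}y_m$, so \eqref{dual:poly:exp:a}, \eqref{dual:poly:exp:b}, and \eqref{dual:poly:exp:c} are literal restatements of \eqref{dual:lp:exp:a}, \eqref{dual:lp:exp:b}, and the identity $q(0)=y_0$ (using $0^0=1$) respectively.

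The only point requiring care --- and the nearest thing to an obstacle --- is the bookkeeping of the transpose together with the $m=0$ term. In \eqref{dual:lp:exp:b} the summand $-mj^{m-1}y_m$ vanishes at $m=0$ for every $j\ge 1$, which is precisely why differentiation drops the constant term of $q$ cleanly and the identity $q'(j)=\sum_{m=0}^p m j^{m-1}y_m$ holds with no boundary subtlety. Beyond checking that the matrix entries match the order conditions, no real difficulty arises: the lemma is simply Farkas duality repackaged in the language of polynomials.
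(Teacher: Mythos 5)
Your proof is correct and takes exactly the paper's approach: the paper also obtains this lemma by applying Farkas' lemma (Proposition~\ref{prop:farkas}) to \eqref{lp:primal:exp} and then identifying the dual variables $(y_0,\dots,y_p)$ with the coefficients of a polynomial of degree at most $p$. The only difference is that the paper states this in one sentence, while you spell out the matrix form, the column-by-column computation of $A^\top y$, and the $0^0=1$ bookkeeping --- all of which check out.
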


\begin{lem}
  \label{lem:farkas:imp}
  Let $r\in \Real$ and let $k,p$ be positive integers.
  The following statements are equivalent:
  \begin{enumerate}[(i)]
    \item
      The LP feasibility problem \eqref{lp:primal:imp} is infeasible.
    \item
      There exist $y_m\in \Real$ for $0\le m \le p$ that satisfy the system
      defined by \eqref{dual:lp:exp:a}--\eqref{dual:lp:exp:c} and
      \begin{equation} \label{dual:lp:imp:d}
        -y_1 \geq 0. \tag{\ref*{dual:lp:exp}d}
      \end{equation}
    \item
      There exists a real polynomial $q$ of degree at most $p$ that
      satisfies the system defined by \eqref{dual:poly:exp:a}--\eqref{dual:poly:exp:c} and
      \begin{equation} \label{dual:poly:imp:d}
        -q'(0) \ge 0. \tag{\ref*{dual:poly:exp}d}
      \end{equation}
  \end{enumerate}
\end{lem}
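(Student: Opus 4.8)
The plan is to prove Lemma~\ref{lem:farkas:imp} by the same two-part strategy that establishes Lemma~\ref{lem:farkas:exp}, adapting it to the additional constraint imposed by the implicit variable $\beta_0$. The equivalence (ii)$\iff$(iii) is purely notational: a real polynomial $q$ of degree at most $p$ is nothing but a choice of coefficients $y_0,\dots,y_p$ via $q(x)=\sum_{m=0}^p y_m x^m$, and under this identification one checks that $q(j)=\sum_m j^m y_m$, that $q'(x)=\sum_m m x^{m-1} y_m$ so that $-q'(j)+r\,q(j)=\sum_m(-mj^{m-1}+rj^m)y_m$, that $q(0)=y_0$, and that $q'(0)=y_1$. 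Thus conditions \eqref{dual:poly:exp:a}--\eqref{dual:poly:exp:c} translate termwise into \eqref{dual:lp:exp:a}--\eqref{dual:lp:exp:c}, and the new condition $-q'(0)\ge 0$ translates into $-y_1\ge 0$, which is exactly \eqref{dual:lp:imp:d}. This part requires only bookkeeping and carries over verbatim from the explicit case together with one extra line for the derivative-at-zero term.

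\smallskip
The substantive part is (i)$\iff$(ii), which I would obtain by applying Farkas' lemma (Proposition~\ref{prop:farkas}) to the implicit primal feasibility problem \eqref{lp:primal:imp}. First I would write \eqref{lp:primal:imp} in the standard Farkas form $Ax=b$, $x\ge 0$, where the nonnegative variables are $x=(\beta_0;\,\beta_1,\dots,\beta_k;\,\delta_1,\dots,\delta_k)$ and the rows are indexed by $m=0,\dots,p$: the $m=0$ row encodes the normalization $\sum_{j}(\delta_j+r\beta_j)=1$ (so $b_0=1$), and the rows $1\le m\le p$ encode $\sum_j\big((\delta_j+r\beta_j)j^m-\beta_j m j^{m-1}\big)-\beta_0\,0^{m-1}=0$ (so $b_m=0$ for $m\ge 1$). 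Reading off the columns: the $\delta_j$-column is $(1,j,j^2,\dots,j^p)^\transp$, the $\beta_j$-column for $1\le j\le k$ is $(r,\,rj-1,\,rj^2-2j,\dots)^\transp$ with entries $rj^m-mj^{m-1}$, and the $\beta_0$-column is $(0,-1,0,\dots,0)^\transp$, since $-0^{m-1}$ vanishes for all $m\ge 2$ and for $m=0$ (our convention $0^0=1$ affects only the $\delta$-columns and the normalization), while it equals $-1$ precisely at $m=1$. By Farkas, \eqref{lp:primal:imp} is infeasible iff there exists $y=(y_0,\dots,y_p)$ with $A^\transp y\ge 0$ and $b^\transp y<0$; here $b^\transp y=y_0$, so the strict inequality is exactly $y_0<0$, i.e.\ \eqref{dual:lp:exp:c}.

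\smallskip
It then remains to identify the rows of $A^\transp y\ge 0$. The $\delta_j$-columns give $\sum_m j^m y_m\ge 0$, i.e.\ \eqref{dual:lp:exp:a}; the $\beta_j$-columns for $1\le j\le k$ give $\sum_m(rj^m-mj^{m-1})y_m\ge 0$, i.e.\ \eqref{dual:lp:exp:b}; and the single $\beta_0$-column gives $-y_1\ge 0$, which is precisely the new inequality \eqref{dual:lp:imp:d}. This shows that the Farkas-dual of the implicit problem is obtained from that of the explicit problem by adjoining exactly one inequality, $-y_1\ge 0$, coming from the one extra primal variable $\beta_0$, which is the structural observation announced in the text preceding the lemma.

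\smallskip
The only genuine subtlety, and the step I expect to require the most care, is the correct treatment of the convention $0^0=1$ and the term $\beta_0\,0^{m-1}$ across the range $0\le m\le p$. One must verify that the $\beta_0$-column contributes $0$ in the $m=0$ row and in every row with $m\ge 2$, and $-1$ only in the $m=1$ row, so that no spurious coupling between $\beta_0$ and $y_0$ or the higher $y_m$ appears; this is what makes the dual acquire the clean single extra constraint $-y_1\ge0$ rather than something entangled with the normalization. Once this indexing is pinned down, the argument is a direct specialization of Farkas' lemma and the remaining verifications are routine.
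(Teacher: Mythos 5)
Your proof is correct and takes essentially the same approach as the paper: the paper disposes of this lemma in the remark preceding it, namely by applying Farkas' lemma (Proposition~\ref{prop:farkas}) to \eqref{lp:primal:imp} and viewing the dual variables $y_0,\dots,y_p$ as the coefficients of a polynomial of degree at most $p$, so that the single extra primal variable $\beta_0$ produces exactly the single extra dual inequality $-y_1\ge 0$; your write-up just makes this bookkeeping explicit. One small quibble: your parenthetical claim that the convention $0^0=1$ ``affects only the $\delta$-columns and the normalization'' is off, since that convention is precisely what makes the $\beta_0$-column entry $-0^{m-1}$ equal to $-1$ at $m=1$, but your computed column $(0,-1,0,\dots,0)^\transp$ is nevertheless correct.
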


Recall that due to the Fundamental theorem of algebra,
a real polynomial can be factored in a unique way as $q(x) = c\cdot \prod_{m=1}^{p_0}(x-\lambda_m)$,
where $c \in \Real \setminus \{0\}$ is the leading coefficient of $q$ and
$\lambda_1, \lambda_2, \ldots, \lambda_{p_0} \in \Comp$ are the roots of $q$.
Moreover, in view of the Complex conjugate root theorem, the non-real roots of $q$ occur in complex conjugate pairs.
The proof of Lemma \ref{lem:dual:roots} is also based on the formula
$\frac{q'(x)}{q(x)} = \sum_{m=1}^{p_0}\frac{1}{(x-\lambda_m)}$
for the logarithmic derivative of the polynomial $q(x) := c\cdot \prod_{m=1}^{p_0}(x-\lambda_m)$.

\begin{lem} \label{lem:dual:roots}
  Let $r\in \Real$,
  and let $q$ be a non-zero real polynomial of degree $p_0 > 0$.
  Let $c \in \Real \setminus \{0\}$ be the leading coefficient of $q$ and
  $\lambda_1, \lambda_2, \ldots, \lambda_{p_0} \in \Comp$ be the roots of $q$.
  For any real $\mu$, let $s(\mu) := \left|\{m : 1\le m \le p_0, \text{ and } \lambda_m\in \Real, \text{ and }\lambda_m \ge \mu \}\right|$
  denote the number of real roots of $q$ that are greater than or equal to $\mu$.
  Then the following statements hold.
  \begin{subequations}
  \begin{enumerate}[(i)]
    \item \label{roots:s1}
      The polynomial $q$ satisfies the inequality \eqref{dual:poly:exp:a} for some integer $j$ if and only if either of the following is true:
      \begin{itemize}
        \item $j$ is a root of $q$;
        \item $j$ is not a root of $q$, and
          \begin{equation} \label{dual:roots:exp:a}
            c \cdot (-1)^{s(j)} > 0.
          \end{equation}
      \end{itemize}
    \item \label{roots:s2}
      Suppose that $q$ satisfies the inequality \eqref{dual:poly:exp:a} for some integer $j$.
      Then $q$ satisfies the inequality \eqref{dual:poly:exp:b} for the same value of $j$ if
      and only if either of the following is true:
      \begin{itemize}
        \item $j$ is a multiple root of $q$;
        \item $j$ is a simple root of $q$, and \eqref{dual:roots:exp:a} holds for $j$;
        \item $j$ is not a root of $q$, and
          \begin{equation} \label{dual:roots:exp:b}
            \sum_{m=1}^{p_0}\frac{1}{j-\lambda_m} \le r.
          \end{equation}
      \end{itemize}
    \item \label{roots:s3}
      The polynomial $q$ satisfies \eqref{dual:poly:exp:c} if and only if $0$
      is not a root of $q$ and
      \begin{equation} \label{dual:roots:exp:c}
        c \cdot (-1)^{s(0)} < 0.
      \end{equation}
    \item \label{roots:s4}
      Suppose that $q$ satisfies \eqref{dual:poly:exp:c}.
      Then $q$ satisfies \eqref{dual:poly:imp:d} if and only if  
      \begin{equation} \label{dual:roots:imp:d}
        \sum_{m=1}^{p_0}\frac{1}{\lambda_m} \le 0.
      \end{equation}
  \end{enumerate}
\end{subequations}
\end{lem}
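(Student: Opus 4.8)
The plan is to derive all four parts from two elementary facts about the factored form $q(x) = c\prod_{m=1}^{p_0}(x-\lambda_m)$: a sign rule for $q$ evaluated at a real point, and the logarithmic-derivative identity $q'(x)/q(x) = \sum_{m=1}^{p_0} 1/(x-\lambda_m)$ quoted just before the statement. The sign rule I would establish first is: for any real $\mu$ that is \emph{not} a root of $q$, one has $\operatorname{sign} q(\mu) = \operatorname{sign}(c)\cdot(-1)^{s(\mu)}$. To see this, group the non-real roots into conjugate pairs; each pair contributes a factor $(\mu-\lambda)(\mu-\conj{\lambda}) = \abs{\mu-\lambda}^2 > 0$, strictly positive since $\mu$ is real and $\lambda$ is not, so only the real roots affect the sign. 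A real root $\lambda_m$ gives a negative factor $\mu-\lambda_m$ exactly when $\lambda_m > \mu$; because $\mu$ is not a root, $\lambda_m \ge \mu$ and $\lambda_m > \mu$ are equivalent for real roots, so the number of negative factors is precisely $s(\mu)$.

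Parts (i) and (iii) then follow immediately. For (i), if $j$ is a root then $q(j)=0$ satisfies \eqref{dual:poly:exp:a} trivially; otherwise $q(j)\ne 0$, so \eqref{dual:poly:exp:a} is equivalent to $q(j)>0$, which by the sign rule is exactly \eqref{dual:roots:exp:a}. For (iii), the strict inequality \eqref{dual:poly:exp:c} rules out $0$ being a root, and the sign rule at $\mu=0$ then turns $q(0)<0$ into \eqref{dual:roots:exp:c}.

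For parts (ii) and (iv) I would bring in the derivative. In (ii), I split on the multiplicity of $j$. If $j$ is a multiple root then $q(j)=q'(j)=0$, so $-q'(j)+r q(j)=0$ and \eqref{dual:poly:exp:b} holds. If $j$ is not a root then, since \eqref{dual:poly:exp:a} is assumed and $j$ is not a root, $q(j)>0$; dividing \eqref{dual:poly:exp:b} by $q(j)$ and invoking the logarithmic-derivative identity converts it into $\sum_m 1/(j-\lambda_m)\le r$, i.e.\ \eqref{dual:roots:exp:b}. Part (iv) is the analogous computation at $\mu=0$: the hypothesis \eqref{dual:poly:exp:c} forces $q(0)<0$, so dividing the inequality $-q'(0)\ge 0$ by the negative quantity $q(0)$ reverses it to $q'(0)/q(0)\ge 0$, and the identity gives $q'(0)/q(0)=-\sum_m 1/\lambda_m$, which is \eqref{dual:roots:imp:d}.

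The step I expect to be the real obstacle is the simple-root case of (ii), where the logarithmic derivative is singular and the sign rule must be adapted. Here $q(j)=0$ but $q'(j)\ne 0$, so \eqref{dual:poly:exp:b} reduces to $q'(j)\le 0$. Writing $q(x)=(x-j)g(x)$ with $g(j)\ne 0$ gives $q'(j)=g(j)$, where $g$ is the polynomial obtained from $q$ by deleting the single real root at $j$. Applying the sign rule to $g$ at $\mu=j$, now legitimate because $g(j)\ne 0$, shows that the count of negative real factors drops by exactly one, so $\operatorname{sign} g(j)=\operatorname{sign}(c)\cdot(-1)^{s(j)-1}$. Hence $q'(j)\le 0$, equivalently $q'(j)<0$ since $q'(j)\ne 0$, holds iff $c\cdot(-1)^{s(j)}>0$, which is \eqref{dual:roots:exp:a}. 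The care needed is precisely the bookkeeping that removing the simple root at $j$ changes $s(j)$ into $s(j)-1$; this rests on the fact that a simple root accounts for the unique real root equal to $j$.
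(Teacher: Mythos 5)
Your proof is correct and takes exactly the route the paper intends: the paper gives no written-out proof of this lemma, only the hints preceding its statement (the factored form, the conjugate-pair sign argument, and the logarithmic-derivative identity $q'(x)/q(x)=\sum_m 1/(x-\lambda_m)$), and your argument fills in those details faithfully, including the one genuinely delicate point, the simple-root case of statement (ii), which you handle correctly via the factorization $q(x)=(x-j)g(x)$ and the observation that $s(j)$ drops by one when the root at $j$ is removed.
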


For convenience, we introduce the following notation.
Let $\Cexp(k,p)$ denote the ``optimal'' SSP coefficient for explicit LM methods with $k$ steps and
order of accuracy $p$, i.e.\
\begin{equation*}
  \Cexp(k,p) = \sup_{\substack{\text{$k$-step explicit}\\ \text{methods of order $p$}}}\C,
\end{equation*}
and let $\Cimp(k,p)$ denote the ``optimal'' SSP coefficient for implicit LM methods with $k$ steps and
order of accuracy $p$, i.e.\
\begin{equation*}
  \Cimp(k,p) = \sup_{\substack{\text{$k$-step implicit}\\ \text{methods of order $p$}}}\C.
\end{equation*}

Our purpose is to study the behavior of $\Cexp(k,p)$ and $\Cimp(k,p)$.
It turns out to be convenient to study polynomials that satisfy the following
conditions, in place of \eqref{dual:poly:exp:a}--\eqref{dual:poly:imp:d}:
      \begin{subequations} \label{dual:poly2:exp}
      \begin{align}
        q(j) &\ge 0 && \text{ for } 1\leq j \leq k \label{dual:poly2:exp:a}\\
        -q'(j)+r\cdot q(j) & \ge 0 && \text{ for } 1\leq j \leq k \label{dual:poly2:exp:b}\\
        q(0) &= 0 \label{dual:poly2:exp:c} \\
        q'(0) &= 0. \label{dual:poly2:exp:d}
      \end{align}
      \end{subequations}
Before we proceed to our main lemmas, Lemma \ref{lem:dualcond:exp} and Lemma \ref{lem:dualcond:imp},
we need to prove two auxiliary results relating these sets of conditions.

\begin{lem} \label{lem:aux:exp}
  Let $r \in \Real$ and let $k, p$ be positive integers.
  Suppose that $q$ is a real polynomial of degree $0 < p_0 \le p$ that satisfies
  the system \eqref{dual:poly:exp:a}--\eqref{dual:poly:exp:c}.
  Then there exists a non-zero real polynomial
  \begin{equation*}
    \tilde{q}(x) := \tilde{c} \cdot \prod_{m=1}^p (x-\tilde{\lambda}_m)
  \end{equation*}
  of degree $p$ that satisfies the system \eqref{dual:poly2:exp:a}--\eqref{dual:poly2:exp:c}, has
  a leading coefficient $\tilde{c} = \pm1$, and whose roots satisfy
  $\tilde{\lambda}_m \in \mathcal{H} := \left\{z \in \Comp : \vert\Im(z)\vert \le \frac{k-1}{2}, \text{ and } 0\le \Re(z)\le k \right\}$
  for all $1 \le m \le p$.
\end{lem}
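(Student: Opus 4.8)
The plan is to build $\tilde q$ from $q$ by a finite sequence of root relocations, each preserving the three conditions \eqref{dual:poly:exp:a}--\eqref{dual:poly:exp:c} while pushing roots into $\mathcal H$, and then to fix the degree and leading coefficient at the very end. Two observations drive everything. First, by Lemma~\ref{lem:dual:roots} the sign conditions in \eqref{dual:poly:exp:a} and \eqref{dual:poly:exp:c} depend only on the \emph{real} roots of $q$ (through the counts $s(j)$), so complex roots are ``sign-neutral'' and are constrained only through the logarithmic-derivative inequality \eqref{dual:poly:exp:b}, i.e.\ through $\sum_m (j-\lambda_m)^{-1}\le r$. Second, multiplication by the decreasing linear factor $(k-x)$ preserves all three conditions: indeed $\bigl((k-x)q\bigr)(j)=(k-j)q(j)\ge0$ and a one-line computation gives
\begin{equation*}
  -\bigl((k-x)q\bigr)'(j)+r\,(k-x)q(j)=q(j)+(k-j)\bigl(-q'(j)+r\,q(j)\bigr)\ge 0,
\end{equation*}
while $\bigl((k-x)q\bigr)(0)=k\,q(0)<0$; this adds a root at $k\in\mathcal H$ and will be my tool for restoring the degree.

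Next I would dispose of the real roots and create the root at the origin. A real root $\lambda<0$ can be deleted: dividing by $(x-\lambda)$ multiplies each $q(j)$ ($0\le j\le k$) by the positive number $(j-\lambda)^{-1}$, preserving the signs in \eqref{dual:poly:exp:a} and \eqref{dual:poly:exp:c}, and removes the positive term $(j-\lambda)^{-1}$ from the sum in \eqref{dual:poly:exp:b}. A real root $\lambda>k$ can instead be slid to $k$; this leaves every $s(j)$ (for $j\le k$) unchanged and only decreases each term $(j-\lambda)^{-1}$ toward the more negative $(j-k)^{-1}$. To force $q(0)=0$, note that since $q(0)<0\le q(1)$ the smallest positive real root $\rho$ lies in $(0,1]$; replacing the factor $(x-\rho)$ by $x$, i.e.\ passing to $\tfrac{x}{x-\rho}q$, makes $0$ a root, keeps $\tfrac{j}{j-\rho}q(j)\ge0$, and changes the log-derivative at $j$ by $\tfrac1j-\tfrac1{j-\rho}\le0$. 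The only boundary case $\rho=1$ cannot be a \emph{simple} root, because by part~(ii) of Lemma~\ref{lem:dual:roots} a simple root at $1$ with $q<0$ on $[0,1)$ would violate \eqref{dual:poly:exp:b}; hence $1$ has multiplicity $\ge2$ and moving a single copy to $0$ is legitimate.

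The crux is confining the complex roots to the strip $|\Im(z)|\le\frac{k-1}{2}$, and this is where the specific shape of $\mathcal H$ is forced. For a conjugate pair $a\pm b\mathrm i$ lying outside $\mathcal H$ I would replace the quadratic factor $(x-a)^2+b^2$ by $(x-k)^2$, collapsing the pair to a double real root at $k$. Since $(x-k)^2\ge0$ this preserves \eqref{dual:poly:exp:a} and changes each $s(j)$ by an even amount, so the sign conditions survive; the real content is \eqref{dual:poly:exp:b}. Comparing contributions to $\sum_m(j-\lambda_m)^{-1}$, the new double root contributes $\tfrac{2}{j-k}$ and the old pair contributed $\tfrac{2(j-a)}{(j-a)^2+b^2}$, and for $1\le j<k$ the inequality ``new $\le$ old'' reduces, after clearing the positive denominators, to
\begin{equation*}
  b^2\ \ge\ (a-j)(k-a).
\end{equation*}
Since $\max_{1\le a\le k}(a-1)(k-a)=\bigl(\tfrac{k-1}{2}\bigr)^2$, this holds exactly when $b>\tfrac{k-1}{2}$ (and trivially when $a>k$, where the right-hand side is negative); the endpoint $j=k$ becomes a multiple root and is handled by part~(ii) of Lemma~\ref{lem:dual:roots}. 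This estimate is precisely what pins down the half-width $\tfrac{k-1}{2}$, and checking it is the main obstacle. A conjugate pair with $|\Im|\le\frac{k-1}{2}$ but $\Re<0$ needs no such estimate: its contribution $\tfrac{2(j-a)}{(j-a)^2+b^2}$ is positive at every $j\ge1$, so it can simply be deleted.

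Finally I would assemble the pieces. After the relocations above, every root lies in $\mathcal H$, conditions \eqref{dual:poly2:exp:a}--\eqref{dual:poly2:exp:c} hold, and the degree is some $d\le p_0\le p$; multiplying by $(k-x)^{\,p-d}$ raises the degree to $p$ (new roots at $k\in\mathcal H$, all conditions preserved and $q(0)=0$ maintained), and a final rescaling by a positive constant normalizes the leading coefficient to $\pm1$ without affecting any of the homogeneous inequalities or the equality $q(0)=0$.
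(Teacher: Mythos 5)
Your proposal is correct, and it follows the same broad strategy as the paper---surgery on the roots guided by Lemma~\ref{lem:dual:roots}, pushing offending roots to $k$ and exploiting the fact that a root at $k$ contributes the most negative possible term $\tfrac{1}{j-k}$ to the logarithmic derivative---but the execution differs at three points, and the differences are worth recording. (1) To create the zero at the origin, the paper simply sets $\hat q(x) := q(x) - q(0)$: this vertical shift makes $0$ a root, makes $\hat q(1),\dots,\hat q(k)$ strictly positive, and leaves the derivative unchanged, so there is no case analysis at all; note however that it preserves \eqref{dual:poly2:exp:b} only because $-r\,q(0)\ge 0$, which tacitly uses $r\ge 0$. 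You instead relocate the smallest positive real root $\rho\in(0,1]$ to $0$, which costs you the intermediate-value argument and the multiplicity-$\ge 2$ edge case at $\rho=1$, but your operations only ever decrease the logarithmic-derivative sums, so your version works verbatim for every $r\in\Real$ as the statement literally allows. (2) For the strip, the paper bounds the contribution of a wide conjugate pair by
\begin{equation*}
  \frac{2(j-\Re\mu)}{(j-\Re\mu)^2+(\Im\mu)^2} \ \ge\ -\frac{1}{\vert\Im\mu\vert} \ >\ -\frac{2}{k-1} \ \ge\ \frac{2}{j-k},
\end{equation*}
whereas your exact criterion $b^2\ge (a-j)(k-a)$, maximized over $a$ to give $\bigl(\tfrac{k-1}{2}\bigr)^2$, is sharper and makes transparent why $\tfrac{k-1}{2}$ is the natural half-width of $\mathcal{H}$. (3) For bookkeeping, the paper relocates \emph{every} root outside $\mathcal{H}$ (real or complex) to $k$ in one definition and repairs the sign structure with the explicit factor $(-1)^{M+p-p_0}$ in the leading coefficient; you mix deletions (negative real roots, pairs with $\Re<0$) with replacements and restore the degree at the end via $(k-x)^{p-d}$, justified by the clean identity $-\bigl((k-x)q\bigr)'(j)+r\bigl((k-x)q\bigr)(j)=q(j)+(k-j)\bigl(-q'(j)+r\,q(j)\bigr)$, a nice self-contained substitute for the paper's parity accounting. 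The only place you are terser than you should be is condition \eqref{dual:poly2:exp:b} at integers that are roots of the modified polynomial, e.g.\ at $j=k$ after sliding a single real root there, where the logarithmic-derivative criterion does not apply and one needs the simple-root-plus-sign-condition clause of statement (ii) of Lemma~\ref{lem:dual:roots} (which does hold, since the slide changes neither $s(k)$ nor the leading coefficient); this is, however, the same level of detail the paper itself allows.
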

\begin{proof}
  Let $r \in \Real$, and let $k, p$ be positive integers.
  Suppose that $q$ is a real polynomial of degree $p_0 \le p$ that satisfies the system
  \eqref{dual:poly:exp:a}--\eqref{dual:poly:exp:c}.
  Let us consider the polynomial $\hat{q}(x) := q(x) - q(0)$.
  This is a non-zero real polynomial of degree $p_0$ that satisfies
  the system \eqref{dual:poly2:exp:a}--\eqref{dual:poly2:exp:c}; furthermore, it
  fulfills the condition $\hat{q}(j) > 0$ for all $1 \le j \le k$.
  Let $\hat{q}(x) := \hat{c} \cdot \prod_{m=1}^{p_0} (x-\hat{\lambda}_m)$ be the
  factored form of $\hat{q}$, let $M := |\{\hat{\lambda}_m : 1 \le m \le p_0, \text{ and } \hat{\lambda}_m \in \Real, \text{ and } \hat{\lambda}_m < 0\}|$
  be the number of negative real roots of $\hat{q}$, and let us take
  \begin{align*}
    \tilde{c} & := (-1)^{M+p-p_0}\cdot \frac{\hat{c}}{|\hat{c}|} \\
    \tilde{\lambda}_m & :=
      \begin{cases}
        \hat{\lambda}_m & \text{ if } 1 \le m \le p_0 \text{ and } \hat{\lambda}_m \in \mathcal{H} \\
        k & \text{ if } 1 \le m \le p_0 \text{ and } \hat{\lambda}_m \not\in \mathcal{H}, \text{ or if } p_0 < m \le p
      \end{cases}\\
      \tilde{q}(x) & := \tilde{c}\cdot \prod_{m=1}^{p} (x-\tilde{\lambda}_m).
  \end{align*}
  It is clear that $\tilde{q}$ satisfies condition \eqref{dual:poly2:exp:c}, 
  and that $\tilde{c} = \pm1$.

  Let $\hat{s}$ and $\tilde{s}$ be defined as in Lemma \ref{lem:dual:roots} with
  respect to $\hat{q}$ and $\tilde{q}$, respectively.
  Because of statements \eqref{roots:s1} and \eqref{roots:s2} of Lemma \ref{lem:dual:roots},
  we have $\hat{c}\cdot (-1)^{\hat{s}(j)} > 0$
  and $\sum_{m=1}^{p_0}(j-\hat{\lambda}_m)^{-1} \le r$ for all $1 \le j \le k$.
  Since the non-real roots of $\hat{q}$ come in complex conjugate pairs,
  $\tilde{s}(j) \equiv \hat{s}(j) + M + p-p_0 \mod 2$
  for all $1 \le j \le k$. Hence
  $\tilde{c} \cdot (-1)^{\tilde{s}(j)} = {\hat{c}}/{|\hat{c}|} \cdot (-1)^{\hat{s}(j)} > 0$
  for all $1 \le j \le k$, and thus
  statement \eqref{roots:s1} of Lemma \ref{lem:dual:roots} implies that  $\tilde{q}$
  satisfies condition \eqref{dual:poly2:exp:a}.

  Now we prove that $\tilde{q}$ satisfies condition \eqref{dual:poly2:exp:b}.
  To achieve this, we show that the contribution of the roots of $\tilde{q}$
  to the left hand side of \eqref{dual:roots:exp:b} is less than or equal to
  that of the roots of $\hat{q}$.

  Let us consider first a $\mu \in \Comp$ such that $|\Im(\mu)| > \frac{k-1}{2}$.
  Then
  \begin{equation*}
    \frac{1}{j-\mu}+\frac{1}{j-\conj{\mu}} = \frac{2(j-\Re(\mu))}{(j-\Re(\mu))^2+(\Im(\mu))^2}
    \geq -\frac{1}{|\Im(\mu)|} > -\frac{2}{k-1} \ge \frac{2}{j-k}
  \end{equation*}
  for all $1 \le j \le k-1$.

  Let us consider now a $\mu \in \Comp$ such that either $\Re(\mu) > k$ or $\Re(\mu) < 0$.
  Then
  \begin{equation*}
    \frac{1}{j-\mu}+\frac{1}{j-\conj{\mu}} = \frac{2(j-\Re(\mu))}{(j-\Re(\mu))^2+(\Im(\mu))^2}
    \geq -\frac{2}{\Re(\mu) - j} > \frac{2}{j-k}
  \end{equation*}
  for all $1 \le j \le k-1$.

  It follows that 
  \begin{equation*}
    \sum_{m=1}^{p}\frac{1}{j-\tilde{\lambda}_m} \le \sum_{m=1}^{p_0}\frac{1}{j-\tilde{\lambda}_m} \le \sum_{m=1}^{p_0}\frac{1}{j-\hat{\lambda}_m} \le r
  \end{equation*}
  for all $1 \le j \le k-1$.
  If $k$ is not a root of $\tilde{q}$ then the same holds for $j = k$;
  otherwise $k$ is a root of $\tilde{q}$ and $\tilde{c}\cdot (-1)^{\tilde{s}(k)} > 0$.
  In both cases, statement \eqref{roots:s2} of Lemma \ref{lem:dual:roots} implies that $\tilde{q}$
  satisfies condition \eqref{dual:poly2:exp:b}, which completes the proof.
\end{proof}

\begin{lem} \label{lem:aux:imp}
  Let $r \in \Real$ and let $k, p$ be positive integers.
  Suppose that $q$ is a real polynomial of degree $0 < p_0 \le p$ that satisfies
  the system formed by \eqref{dual:poly:exp:a}--\eqref{dual:poly:exp:c} and \eqref{dual:poly:imp:d}.
  Then there exists a non-zero real polynomial
  \begin{equation*}
    \tilde{q}(x) := \tilde{c} \cdot \prod_{m=1}^p (x-\tilde{\lambda}_m)
  \end{equation*}
  of degree $p$ that satisfies the system \eqref{dual:poly2:exp:a}--\eqref{dual:poly2:exp:d}, has
  a leading coefficient $\tilde{c} = \pm 1$, and whose roots satisfy
  $\tilde{\lambda}_m \in \mathcal{H} := \left\{z \in \Comp : \vert\Im(z)\vert \le \frac{k-1}{2}, \text{ and } 0\le \Re(z)\le k \right\}$
  for all $1 \le m \le p$.
\end{lem}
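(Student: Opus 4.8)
The plan is to mirror the construction in the proof of Lemma \ref{lem:aux:exp}, the only genuinely new task being to arrange that $0$ is a \emph{double} root of $\tilde q$, so that \eqref{dual:poly2:exp:d} holds in addition to \eqref{dual:poly2:exp:c}. The extra hypothesis \eqref{dual:poly:imp:d}, namely $-q'(0) \ge 0$, is precisely what will make this possible.

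As in Lemma \ref{lem:aux:exp} I would first pass to $\hat q(x) := q(x) - q(0)$, a non-zero real polynomial of degree $p_0$ that satisfies \eqref{dual:poly2:exp:a}--\eqref{dual:poly2:exp:c} with $\hat q(j) > 0$ for $1 \le j \le k$, and record that $\hat q(0) = 0$ and $\hat q'(0) = q'(0) \le 0$. If $q'(0) = 0$, then $0$ is already a root of $\hat q$ of multiplicity at least two and no further work is needed at this stage. If $q'(0) < 0$, then $\hat q$ is strictly negative immediately to the right of $0$, while $\hat q(1) = q(1) - q(0) > 0$; by the intermediate value theorem $\hat q$ therefore has a real root $\rho \in (0,1)$. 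I would relocate this root to $0$, i.e.\ pass to the polynomial obtained from $\hat q$ by deleting the factor $(x-\rho)$ and inserting a factor $x$; this is again a real polynomial of degree $p_0$, and now $0$ is a root of multiplicity at least two.

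The key point is that relocating the root from $\rho$ to $0$ preserves \eqref{dual:poly2:exp:a} and \eqref{dual:poly2:exp:b} for all $1 \le j \le k$. Since $0 < \rho < 1 \le j$, neither $\rho$ nor $0$ is counted by $s(j)$, so the sign count $s(j)$ and the leading coefficient are unchanged and no $j \in \{1,\dots,k\}$ becomes a root; hence statement \eqref{roots:s1} of Lemma \ref{lem:dual:roots} still yields \eqref{dual:poly2:exp:a}. The relocation changes the left-hand side of \eqref{dual:roots:exp:b} by $\frac{1}{j} - \frac{1}{j-\rho}$, which is negative because $0 < j-\rho < j$, so the logarithmic-derivative sum only decreases and remains $\le r$, and statement \eqref{roots:s2} of Lemma \ref{lem:dual:roots} still yields \eqref{dual:poly2:exp:b}. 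It also changes neither $p_0$ nor the number $M$ of negative real roots, since $\rho$ and $0$ are both non-negative, so the sign bookkeeping of Lemma \ref{lem:aux:exp} applies verbatim. With a degree-$p_0$ polynomial satisfying \eqref{dual:poly2:exp:a}--\eqref{dual:poly2:exp:c} and carrying a double root at $0$ in hand, I would finish exactly as in Lemma \ref{lem:aux:exp}: replace every root lying outside $\mathcal H$ by $k$, append $p-p_0$ further roots at $k$, and fix the leading coefficient as $\tilde c = \pm 1$. As $0 \in \mathcal H$, the double root at $0$ is untouched, so the resulting degree-$p$ polynomial $\tilde q$ satisfies \eqref{dual:poly2:exp:d} together with \eqref{dual:poly2:exp:a}--\eqref{dual:poly2:exp:c}, with all roots in $\mathcal H$.

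The main obstacle is reconciling the required double root at $0$ with \eqref{dual:poly2:exp:b}: a root at $0$ contributes $+\tfrac1j > 0$ to the sum in \eqref{dual:roots:exp:b} and so works \emph{against} that inequality, which is why one cannot simply impose it for free. Hypothesis \eqref{dual:poly:imp:d} resolves exactly this tension: through $\hat q'(0) = q'(0) \le 0$ it forces $\hat q$ to dip below zero just to the right of $0$ and hence, by the intermediate value theorem, to possess a root in $(0,1)$, and relocating \emph{that} root to $0$ \emph{lowers} the logarithmic-derivative sum rather than raising it. Without the hypothesis, $\hat q$ could be positive immediately to the right of $0$, no such root need exist, and the argument would break down. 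I would also note that the small-degree behaviour is automatically consistent: for $p_0 = 1$ the conditions \eqref{dual:poly:exp:a} and \eqref{dual:poly:exp:c} already force $q'(0) > 0$, so \eqref{dual:poly:imp:d} cannot hold and that case is vacuous.
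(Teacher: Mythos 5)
Your proof is correct and takes essentially the same route as the paper's: the paper likewise passes to $\hat q(x) = q(x) - q(0)$, shows that besides the root at $0$ there is a second real root in $[0,1)$, relocates that root to $0$ to create the double root required by \eqref{dual:poly2:exp:d}, and then finishes with exactly the construction of Lemma \ref{lem:aux:exp}. The only (cosmetic) difference is that the paper obtains the extra root in $[0,1)$ from the sign-count statements of Lemma \ref{lem:dual:roots} via a parity argument, whereas you use the intermediate value theorem together with $\hat q'(0)=q'(0)\le 0$, and you spell out explicitly that the relocation preserves \eqref{dual:poly2:exp:a}--\eqref{dual:poly2:exp:b}, which the paper leaves implicit.
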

\begin{proof}
  Let $r \in \Real$, and let $k, p$ be positive integers.
  Suppose that $q$ is a real polynomial of degree $p_0 \le p$ that satisfies the system
  formed by \eqref{dual:poly:exp:a}--\eqref{dual:poly:exp:c} and \eqref{dual:poly:imp:d}.
  Let us consider the polynomial $\hat{q}(x) := q(x)-q(0)$.
  This is a non-zero real polynomial of degree $p_0$ that satisfies
  the system formed by \eqref{dual:poly2:exp:a}--\eqref{dual:poly2:exp:c} and \eqref{dual:poly:imp:d},
  furthermore it fulfills the condition $\hat{q}(j) > 0$ for all $1 \le j \le k$.

  Let $\hat{q}(x) := \hat{c} \cdot \prod_{m=1}^{p_0} (x-\hat{\lambda}_m)$ be the
  factored form of $\hat{q}$, and let $\hat{s}$ be as in Lemma \ref{lem:dual:roots} with respect to $\hat{q}$.
  Statement \eqref{roots:s1} of Lemma \ref{lem:dual:roots} implies that $\hat{c}\cdot (-1)^{\hat{s}(1)} > 0$
  and statement \eqref{roots:s2} of Lemma \ref{lem:dual:roots} implies that either $\hat{c}\cdot (-1)^{\hat{s}(0)} > 0$
  or $0$ is root of $\hat{q}$ of multiplicity at least two.
  In both cases there are two distinct indices $1 \le m_0 \ne m_1 \le p_0$, such that $\hat{\lambda}_{m_0} = 0$,
  and $\hat{\lambda}_{m_1} \in \Real$, and $0 \le \hat{\lambda}_{m_1} < 1$.
  Let $M$ be the number of negative real roots of $\hat{q}$, as in the proof of Lemma \ref{lem:aux:exp},
  and let us take
  \begin{align*}
    \tilde{c} & := (-1)^{M+p-p_0}\cdot \frac{\hat{c}}{|\hat{c}|} \\
    \tilde{\lambda}_m & :=
      \begin{cases}
        \hat{\lambda}_m & \text{ if } 1 \le m \le p_0 \text{ and } \hat{\lambda}_m \in \mathcal{H} \text{ and } m \ne m_1\\
        0 & \text{ if } m = m_1 \\
        k & \text{ if } 1 \le m \le p_0 \text{ and } \hat{\lambda}_m \not\in \mathcal{H}, \text{ or if } p_0 < m \le p
      \end{cases}\\
      \tilde{q}(x) & := \tilde{c}\cdot \prod_{m=1}^{p} (x-\tilde{\lambda}_m).
  \end{align*}

  We can proceed in the same way as in Lemma \ref{lem:aux:exp} to show that this
  $\tilde{q}$ satisfies the claims of the lemma.
\end{proof}

\begin{lem} \label{lem:dualcond:exp}
  Let $r \ge 0$ and let $k, p$ be positive integers.
  Then the following statements are equivalent.
  \begin{enumerate}[(i)]
    \item \label{dualcond:exp:s1}
      $\Cexp(k,p) \le r$.
    \item \label{dualcond:exp:s2}
      There exists a non-zero real polynomial $q$ of degree at most $p$ that satisfies
      the system \eqref{dual:poly2:exp:a}--\eqref{dual:poly2:exp:c}.
  \end{enumerate}
\end{lem}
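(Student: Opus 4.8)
The plan is to translate the assertion $\Cexp(k,p) \le r$ into a statement about the Farkas-dual polynomials of Lemma~\ref{lem:farkas:exp}, and then to bridge the gap between the strict condition \eqref{dual:poly:exp:c}, namely $q(0) < 0$, and the equality \eqref{dual:poly2:exp:c}, namely $q(0) = 0$. The key dictionary is the following. Feasibility of \eqref{lp:primal:exp} is monotone in $r$: if a method $(\alpha_j,\beta_j)$ is feasible at $r$, then $\delta_j + (r-r')\beta_j \ge 0$ shows the same method is feasible at every $r' \le r$, since the order conditions do not involve $r$. Hence the feasible values of $r$ form a down-closed interval with supremum $\Cexp(k,p)$, and therefore $\Cexp(k,p) \le r$ holds if and only if \eqref{lp:primal:exp} is infeasible at every $r' > r$, which by Lemma~\ref{lem:farkas:exp} is equivalent to the existence, for every $r' > r$, of a polynomial of degree at most $p$ satisfying \eqref{dual:poly:exp:a}--\eqref{dual:poly:exp:c} with $r$ replaced by $r'$.

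For the implication \eqref{dualcond:exp:s2}$\Rightarrow$\eqref{dualcond:exp:s1}, I would start from a nonzero $q$ of degree $p_0 \le p$ satisfying \eqref{dual:poly2:exp:a}--\eqref{dual:poly2:exp:c} and fix an arbitrary $r' > r$. Writing $-q'(j) + r'q(j) = \bigl(-q'(j) + rq(j)\bigr) + (r'-r)q(j)$ shows that $q$ already satisfies \eqref{dual:poly:exp:a}--\eqref{dual:poly:exp:b} at $r'$; only \eqref{dual:poly:exp:c} fails, since $q(0) = 0$. I would repair this by a perturbation $\tilde q := q - \varepsilon \ell$ with $\varepsilon > 0$ small, where $\ell$ vanishes to first order at the simple integer roots of $q$ in $[1,k]$ and to second order at the multiple ones, normalized so that $\ell(0) > 0$. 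Because $q$ has a root at $0$ in addition to its roots in $[1,k]$, a root count bounds $\deg \ell$ by $p_0 - 1 < p$, so $\deg \tilde q \le p$. The decisive observation, which uses $r' > r$ strictly, is that $B_j := -q'(j) + r'q(j)$ vanishes \emph{only} at the multiple integer roots of $q$ in $[1,k]$ (both summands above must then be zero), and there $\ell$ has a double root, so $-\ell'(j) + r'\ell(j) = 0$; at every other node $B_j > 0$ leaves room for a small $\varepsilon$. Thus $\tilde q$ satisfies \eqref{dual:poly:exp:a}--\eqref{dual:poly:exp:c} at $r'$, so \eqref{lp:primal:exp} is infeasible at $r'$ by Lemma~\ref{lem:farkas:exp}; as $r' > r$ was arbitrary, $\Cexp(k,p) \le r$.

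For \eqref{dualcond:exp:s1}$\Rightarrow$\eqref{dualcond:exp:s2}, I would argue by compactness. Choosing $r'_n \downarrow r$ with $r'_n > \Cexp(k,p)$, the problem \eqref{lp:primal:exp} is infeasible at each $r'_n$, so Lemma~\ref{lem:farkas:exp} yields dual polynomials; these are nonconstant, as a constant cannot satisfy both \eqref{dual:poly:exp:a} and \eqref{dual:poly:exp:c}, so Lemma~\ref{lem:aux:exp} applies at $r'_n$ and produces $\tilde q_n$ of degree exactly $p$, with leading coefficient $\pm 1$ and all roots in the compact set $\mathcal H$ of Lemma~\ref{lem:aux:exp}, satisfying \eqref{dual:poly2:exp:a}--\eqref{dual:poly2:exp:c} at $r'_n$. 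Since the $\tilde q_n$ range over a compact family (finitely many leading coefficients, roots in $\mathcal H^p$), I would pass to a convergent subsequence whose limit $q^\ast$ is a nonzero polynomial of degree $p$ with roots in $\mathcal H$; continuity of evaluation and differentiation then gives $q^\ast(j) \ge 0$ and $-q^{\ast\prime}(j) + r\,q^\ast(j) \ge 0$ for $1 \le j \le k$, while $q^\ast(0) = \lim \tilde q_n(0) = 0$, which is exactly \eqref{dual:poly2:exp:a}--\eqref{dual:poly2:exp:c} at $r$.

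I expect the main obstacle to be the boundary case $r = \Cexp(k,p)$, where \eqref{lp:primal:exp} may be feasible at $r$ itself, so Farkas' lemma produces no certificate \emph{at} $r$ and the equality $q(0) = 0$ can only be reached in the limit; this is precisely why the normalization of Lemma~\ref{lem:aux:exp}, bounded roots together with unit leading coefficient, is needed to keep the approximating polynomials from degenerating. In the other direction the delicate point is choosing $\ell$ so that the perturbation preserves \eqref{dual:poly:exp:b} at the multiple roots without raising the degree past $p$, and it is the strict inequality $r' > r$ that confines the equality cases of \eqref{dual:poly:exp:b} to those roots and makes the construction go through.
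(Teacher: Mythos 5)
Your proof is correct, and its overall skeleton --- Farkas duality via Lemma~\ref{lem:farkas:exp}, a perturbation argument to bridge the gap between $q(0)=0$ and $q(0)<0$, and the normalization of Lemma~\ref{lem:aux:exp} combined with Bolzano--Weierstrass compactness --- is the same as the paper's. The direction \eqref{dualcond:exp:s1}$\Rightarrow$\eqref{dualcond:exp:s2} is essentially identical to the paper's proof (the paper first reduces to $r=\Cexp(k,p)$ by monotonicity of the system in $r$ and takes $r_n\downarrow\Cexp(k,p)$, you take $r'_n\downarrow r$ directly; this is immaterial), and you make explicit a point the paper glosses over, namely that the Farkas certificates are nonconstant so that Lemma~\ref{lem:aux:exp} applies. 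The genuine difference is in \eqref{dualcond:exp:s2}$\Rightarrow$\eqref{dualcond:exp:s1}: the paper performs root surgery on $q$ --- it deletes the roots at $0$ and, if needed for the sign at the origin, inserts a root at a small $\epsilon>0$ --- and then verifies \eqref{dual:poly:exp:a}--\eqref{dual:poly:exp:c} at the midpoint $\tfrac{\rho_0+\rho_1}{2}$ of its contradiction hypothesis $\Cexp(k,p)=\rho_1>\rho_0=r$, using the sign and logarithmic-derivative criteria of Lemma~\ref{lem:dual:roots}; you instead use the additive perturbation $\tilde q = q-\varepsilon\ell$, with $\ell$ vanishing simply at the simple integer roots and doubly at the multiple integer roots of $q$ in $[1,k]$ and sign-normalized at $0$. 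Your construction is more elementary and self-contained: it bypasses Lemma~\ref{lem:dual:roots} entirely, resting only on the observation that $-q'(j)+r'q(j)$ can vanish only at multiple integer roots once $r'>r$ is strict, plus a finite slack argument; the degree count $\deg\ell\le p_0-1$ via the extra root at $0$ is also right. What the paper's multiplicative construction buys instead is explicit control of the root configuration, which is the currency of its subsequent lemmas (Lemma~\ref{lem:dualpropx:exp} and onward), so the same technique is reused there. One cosmetic remark: in your compactness step you should note, as the paper does via conjugate pairs of roots, that the limit polynomial $q^\ast$ is \emph{real} --- immediate, since the coefficients of $\tilde q_n$ are real and converge --- because statement \eqref{dualcond:exp:s2} asks for a real polynomial.
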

\begin{proof}
  We can check that the polynomial $q(x) :=  x-\frac12$ of degree one satisfies the system
  \eqref{dual:poly:exp:a}--\eqref{dual:poly:exp:c} with $r = 2$ for any value of $k$,
  and thus Lemma \ref{lem:farkas:exp} implies that $\Cexp(k, p) \le 2$, and hence it is finite,
  for all positive integers $k$ and $p$.
  Now we are ready to show the equivalence of statements \eqref{dualcond:exp:s1} and \eqref{dualcond:exp:s2}.

  \textbf{Proof that statement \eqref{dualcond:exp:s2} implies statement \eqref{dualcond:exp:s1}.}
  Suppose that $q:= c \cdot \prod_{m=1}^{p_0} (x - \lambda_m)$ is a non-zero real polynomial of degree $0 < p_0 \le p$ that
  satisfies the system \eqref{dual:poly2:exp:a}--\eqref{dual:poly2:exp:c} with
  $r = \rho_0 \ge 0$ for some positive integers $k$ and $p$, but $\Cexp(k, p) = \rho_1 > \rho_0$.
  Since $0$ is a root of $q$, we can assume without the loss of generality that
  $\lambda_1, \ldots, \lambda_{p_1} \ne 0$ and $\lambda_{p_1+1}, \ldots, \lambda_{p_0} = 0$
  for some integer $0 \le p_1 < p_0$.

  Let us choose $0 < \epsilon < 1$ so that $\frac{\epsilon}{1-\epsilon} \le \frac{\rho_1-\rho_0}{2}$,
  and let us take the polynomial
  \begin{equation*}
    \tilde{q}(x) :=
    \begin{cases}
      c \cdot \prod_{m=1}^{p_1} (x - \lambda_m) & \text{ if } c \cdot \prod_{m=1}^{p_1} ( - \lambda_m) < 0, \\
      c \cdot (x-\epsilon)\prod_{m=1}^{p_1} (x - \lambda_m) & \text{ if } c \cdot \prod_{m=1}^{p_1} ( - \lambda_m) > 0.
    \end{cases}
  \end{equation*}
  Let $s$ and $\tilde{s}$ be as in Lemma \ref{lem:dual:roots}
  with respect to $q$ and $\tilde{q}$, respectively.
  By construction, both the leading coefficients and the set of real roots greater than $\epsilon$ of
  $q$ and $\tilde{q}$ coincide, and thus $\tilde{s}(j) = s(j)$ for all $1 \le j \le k$.

  We can check that $\tilde{q}$ satisfies conditions \eqref{dual:poly:exp:a} and
  \eqref{dual:poly:exp:c}.
  Moreover, in view of Lemma \ref{lem:dual:roots}, if $j$ is not a root of $q$ for some $1 \le j \le k$
  then \eqref{dual:roots:exp:b} holds for the roots of $q$ with $r = \rho_0$, and thus
  \begin{align*}
    \rho_0 & \ge \sum_{m=1}^{p_0} \frac1{j-\lambda_m} = \sum_{m=1}^{p_1} \frac{1}{j-\lambda_m}
    + (p_0-p_1)\frac{1}{j} \ge \sum_{m=1}^{p_1} \frac{1}{j-\lambda_m} + \frac{1}{j} \\
    & = \sum_{m=1}^{p_1} \frac1{j-\lambda_m} + \frac{1}{j-\epsilon}
    - \frac{\epsilon}{j(j-\epsilon)} \ge 
    \sum_{m=1}^{p_1} \frac1{j-\lambda_m} + \frac{1}{j-\epsilon} - \frac{\rho_1-\rho_0}{2} \\
    \frac{\rho_0+\rho_1}{2} & \ge \sum_{m=1}^{p_1} \frac1{j-\lambda_m} + \frac{1}{j-\epsilon}.
  \end{align*}

  Hence, in view of statement \eqref{roots:s2} of Lemma \ref{lem:dual:roots}, $\tilde{q}$ satisfies condition
  \eqref{dual:poly:exp:b} with $r = \frac{\rho_0 + \rho_1}{2}$.
  Now Lemma \ref{lem:farkas:exp} implies that $\C < \frac{\rho_0+\rho_1}{2}$ for any $k$-step
  explicit LM methods with order of accuracy $p$, which leads to the contradiction
  $\Cexp(k, p) \le \frac{\rho_0+\rho_1}{2} < \rho_1$.
  This completes the first part of the proof.

  \textbf{Proof that statement \eqref{dualcond:exp:s1} implies statement \eqref{dualcond:exp:s2}.}
  Let $k, p$ be positive integers.
  We can check that if a non-zero real polynomial $q$ satisfies the system
  \eqref{dual:poly2:exp:a}--\eqref{dual:poly2:exp:c} with $r = \rho$
  then it satisfies the same system with any $r > \rho$. Hence it is enough to
  prove that there is a non-zero real polynomial $q$ of degree at most $p$ that satisfies the system
  \eqref{dual:poly2:exp:a}--\eqref{dual:poly2:exp:c} with $r = \Cexp(k,p)$.

  Let $\mathcal{H}$ be as in Lemma \ref{lem:aux:exp}.
  By the definition of $\Cexp(k, p)$ and due to Lemma \ref{lem:farkas:exp}, there
  is a real sequence $(r_n)_{n = 1}^\infty$ and a corresponding sequence of
  real polynomials $(q_n)_{n = 1}^\infty$ of degree at most $p$, such that
  $\lim_{n\to \infty} r_n = \Cexp(k, p)$, and for all $n$, the polynomial $q_n$
  satisfies the system \eqref{dual:poly:exp:a}--\eqref{dual:poly:exp:c} with $r = r_n$.
  Due to Lemma \ref{lem:aux:exp}, for any positive integer $n$, there exists a
  non-zero real polynomial
  $\tilde{q}_n(x) := \tilde{c}_n \cdot \prod_{m=1}^p (x - \tilde{\lambda}_{n, m})$
  of degree $p$, such that $\tilde{q}_n$ satisfies the system
  \eqref{dual:poly2:exp:a}--\eqref{dual:poly2:exp:c} with $r=r_n$, moreover
  $\tilde{c}_n = \pm 1$ and $\tilde{\lambda}_{n, m} \in \mathcal{H}$ for all $1 \le m \le p$.

  Since $(\tilde{\lambda}_{n, 1}, \tilde{\lambda}_{n, 2}, \ldots, \tilde{\lambda}_{n, p}) \in \mathcal{H}^p$
  for all $n$, and since $\mathcal{H}^p \subset \Comp^p$ is closed and bounded,
  the Bolzano--Weiertrass theorem implies the existence of a strictly increasing 
  sequence of indices $(n_\ell)_{\ell=1}^\infty$, such that for all $1 \le m \le p$, the limit
  $\tilde{\lambda}_{m} := \lim_{\ell \to \infty} \tilde{\lambda}_{n_\ell, m}$
  exists, $\tilde{\lambda}_m \in \mathcal{H}$ and non-real $\tilde{\lambda}$ occur in
  complex conjugate pairs, moreover
  $\tilde{c}_{n_\ell} = \tilde{c}$ for all $\ell$.
  A continuity argument yields that
  $\tilde{q}(x) := \tilde{c}\cdot \prod_{m=1}^p (x - \tilde{\lambda}_m)$ is a real polynomial of degree $p$ that
  satisfies the system \eqref{dual:poly2:exp:a}--\eqref{dual:poly2:exp:c}
  with $r = \Cexp(k, p)$, which completes the proof.
\end{proof}

\begin{lem} \label{lem:dualcond:imp}
  Let $r \ge 0$ and let $k$, $p$ be positive integers.
  Then the following statements are equivalent.
  \begin{enumerate}[(i)]
    \item \label{dualcond:imp:s1}
      $\Cimp(k,p) \le r$.
    \item \label{dualcond:imp:s2}
      There exists a non-zero real polynomial $q$ of degree at most $p$ that satisfies
      the system \eqref{dual:poly2:exp:a}--\eqref{dual:poly2:exp:d}.
  \end{enumerate}
\end{lem}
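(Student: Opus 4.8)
The plan is to follow the proof of Lemma~\ref{lem:dualcond:exp} step by step, replacing each explicit ingredient by its implicit analogue: Lemma~\ref{lem:farkas:imp} in place of Lemma~\ref{lem:farkas:exp}, Lemma~\ref{lem:aux:imp} in place of Lemma~\ref{lem:aux:exp}, and carrying the extra conditions \eqref{dual:poly:imp:d} and \eqref{dual:poly2:exp:d} through both implications. As preliminaries I would record two facts. First, a polynomial satisfying \eqref{dual:poly2:exp:a}--\eqref{dual:poly2:exp:d} with parameter $\rho$ satisfies it with every $r\ge\rho$, since only \eqref{dual:poly2:exp:b} involves $r$ and $q(j)\ge0$; hence it suffices to treat $r=\Cimp(k,p)$. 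Second, when $p=1$ no polynomial of degree at most $1$ can meet $q(0)=q'(0)=0$ together with $q(j)\ge0$, so statement~\eqref{dualcond:imp:s2} fails; correspondingly $\Cimp(k,1)=\infty$ (backward Euler has $\C=\infty$), so \eqref{dualcond:imp:s1} fails too and the equivalence is trivially true. For $p\ge2$ the polynomial $q(x)=x^2-\tfrac14$ satisfies \eqref{dual:poly:exp:a}--\eqref{dual:poly:exp:c} and \eqref{dual:poly:imp:d} with $r=\tfrac83$ for every $k$, so $\Cimp(k,p)$ is finite and I may write $\Cimp(k,p)=\rho_1$ below.

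For \eqref{dualcond:imp:s2}$\Rightarrow$\eqref{dualcond:imp:s1} I would argue by contradiction as in the explicit case: suppose $q$ of degree $0<p_0\le p$ satisfies \eqref{dual:poly2:exp:a}--\eqref{dual:poly2:exp:d} with $r=\rho_0$ but $\Cimp(k,p)=\rho_1>\rho_0$, and build $\tilde q$ satisfying the implicit \emph{inequality} system \eqref{dual:poly:exp:a}--\eqref{dual:poly:exp:c} and \eqref{dual:poly:imp:d} with $r=\tfrac{\rho_0+\rho_1}{2}$; Lemma~\ref{lem:farkas:imp} then forces $\Cimp(k,p)\le\tfrac{\rho_0+\rho_1}{2}<\rho_1$, a contradiction. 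The new feature is that \eqref{dual:poly2:exp:c} and \eqref{dual:poly2:exp:d} together make $0$ a root of $q$ of multiplicity $\nu\ge2$, contributing $\nu/j\ge 2/j$ to $\sum_m(j-\lambda_m)^{-1}$ at each grid point. I would form $\tilde q$ by deleting all $\nu$ zero roots and reinserting one real root at a point $-b<0$ near $0$ (and, when the sign of $c\prod_m(-\lambda_m)$ would otherwise give $\tilde q(0)>0$, a second root at a point $\epsilon\in(0,1)$, exactly as in the explicit proof). Since $-b<0<1\le j$ and $\epsilon<1\le j$, neither added root lies at or above a grid point, so with $s,\tilde s$ as in Lemma~\ref{lem:dual:roots} we have $\tilde s(j)=s(j)$; conditions \eqref{dual:poly:exp:a} and \eqref{dual:poly:exp:c} then follow from Lemma~\ref{lem:dual:roots}\eqref{roots:s1} just as before, and the cases $j\in\{\lambda_m\}$ are handled through Lemma~\ref{lem:dual:roots}\eqref{roots:s2} verbatim.

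Checking the two quantitative conditions is where the added root must be tuned. For \eqref{dual:poly:exp:b} I would use that the reinserted roots contribute at most $(j+b)^{-1}+(j-\epsilon)^{-1}\le 2/j+\epsilon/(1-\epsilon)\le \nu/j+\tfrac{\rho_1-\rho_0}{2}$ for each $1\le j\le k$, so that $\sum_m(j-\tilde\lambda_m)^{-1}\le\rho_0+\tfrac{\rho_1-\rho_0}{2}=\tfrac{\rho_0+\rho_1}{2}$ once $\epsilon$ is chosen with $\epsilon/(1-\epsilon)\le\tfrac{\rho_1-\rho_0}{2}$; this is exactly where multiplicity $\nu\ge2$ is needed, since it supplies the $2/j$ of budget that pays for both reinserted roots. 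The genuinely new condition is \eqref{dual:poly:imp:d}: as $\tilde q(0)<0$, it is equivalent to $\sum_m\tilde\lambda_m^{-1}\le0$ by Lemma~\ref{lem:dual:roots}\eqref{roots:s4}, and the nonzero roots $\lambda_1,\dots,\lambda_{p_1}$ of $q$ carry over with an \emph{a priori} uncontrolled sum $S=\sum_m\lambda_m^{-1}$. The point of inserting the root at $-b$ is that it adds $1/b$ to $-\sum_m\tilde\lambda_m^{-1}$; choosing $b$ small enough (after $\epsilon$ is fixed) that $1/b\ge S+1/\epsilon$ drives $\sum_m\tilde\lambda_m^{-1}\le0$ and secures \eqref{dual:poly:imp:d}. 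The main obstacle is precisely this simultaneous bookkeeping: one construction that preserves the sign pattern \eqref{dual:poly:exp:a}/\eqref{dual:poly:exp:c}, respects the tightened bound \eqref{dual:poly:exp:b}, and meanwhile forces $\sum_m\tilde\lambda_m^{-1}$ non-positive; the staged choice ($\epsilon$ first for \eqref{dual:poly:exp:b}, then $b$ for \eqref{dual:poly:imp:d}) is what keeps these compatible.

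For \eqref{dualcond:imp:s1}$\Rightarrow$\eqref{dualcond:imp:s2} I would reproduce the compactness argument of Lemma~\ref{lem:dualcond:exp} essentially unchanged. Taking $r_n=\Cimp(k,p)+1/n$, the problem \eqref{lp:primal:imp} is infeasible for each $r_n$, so Lemma~\ref{lem:farkas:imp} yields polynomials satisfying \eqref{dual:poly:exp:a}--\eqref{dual:poly:exp:c} and \eqref{dual:poly:imp:d} with $r=r_n$; Lemma~\ref{lem:aux:imp} upgrades these to degree-$p$ polynomials $\tilde q_n$ with leading coefficient $\pm1$ and roots in the compact set $\mathcal H$, satisfying the \emph{equality} system \eqref{dual:poly2:exp:a}--\eqref{dual:poly2:exp:d}. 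Bolzano--Weierstrass extracts a subsequence along which the roots and leading coefficient converge, and the limiting polynomial satisfies \eqref{dual:poly2:exp:a}--\eqref{dual:poly2:exp:d} with $r=\Cimp(k,p)$ by continuity; here \eqref{dual:poly2:exp:d}, like the other conditions, is closed and survives the limit. This direction presents no obstacle beyond those already resolved for Lemma~\ref{lem:dualcond:exp}.
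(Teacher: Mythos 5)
Your proof is correct and takes exactly the route the paper intends: the paper's own proof of this lemma is a one-line deferral to Lemma~\ref{lem:dualcond:exp}, and your argument is precisely that adaptation, substituting Lemma~\ref{lem:farkas:imp} and Lemma~\ref{lem:aux:imp} for their explicit counterparts and carrying the extra condition through both directions. The details you add beyond the explicit case --- the vacuous $p=1$ case (where both statements fail since $\Cimp(k,1)=\infty$), and the root planted at $-b$ with $b$ chosen after $\epsilon$ so that Lemma~\ref{lem:dual:roots}\eqref{roots:s4} yields \eqref{dual:poly:imp:d} --- are exactly the adjustments hidden in the paper's ``essentially the same,'' and you handle both correctly.
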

\begin{proof}
  The proof is essentially the same as for Lemma \ref{lem:dualcond:exp}.
\end{proof}

The results of this paper are based on analysing the polynomials $q$ appearing
in Lemma \ref{lem:dualcond:exp} and in Lemma \ref{lem:dualcond:imp},
and for this reason, we establish some basic properties of these polynomials in
Lemma \ref{lem:dualpropx:exp} and in Lemma \ref{lem:dualpropx:imp}.

\begin{lem} \label{lem:dualpropx:exp}
  Let $k, p$ be positive integers and let $r \ge \Cexp(k, p)$ be a real number.
  Then there exists a non-zero real polynomial $q$ that solves the system
  \eqref{dual:poly2:exp:a}--\eqref{dual:poly2:exp:c} and has the following properties.
  \begin{enumerate}
      \item The degree of $q$ is exactly $p$. \label{prop1}
      \item The real parts of all roots of $q$ except for $0$ lie in the interval $[1,k]$. \label{prop2}
      \item All real roots of $q$ are integers. \label{prop3}
      \item The multiplicity of the root $0$ is $1$. For $1\leq j \leq k-1$,
          if $j$ is a root of $q$ then its multiplicity is $2$. \label{prop5}
      \item For even $p$, $k$ is a root of $q$ of multiplicity $1$. For odd $p$, if $k$ is a root of $q$ then its multiplicity is $2$. \label{prop4}
      \item The polynomial $q$ is non-negative on the interval $[0, k]$. \label{prop6}
  \end{enumerate}
  Moreover, if $\Cexp(k, p) > 0$ and $r = \Cexp(k, p)$ then every non-zero real
  polynomial $q$ of degree at most $p$ that solves the system
  \eqref{dual:poly2:exp:a}--\eqref{dual:poly2:exp:c} has these properties.
\end{lem}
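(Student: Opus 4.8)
The plan is to reduce the whole statement to the extremal value $r=\Cexp(k,p)$ and then to read the three defining conditions through the root structure described in Lemma~\ref{lem:dual:roots}. By Lemma~\ref{lem:dualcond:exp}, a non-zero polynomial of degree at most $p$ solving \eqref{dual:poly2:exp:a}--\eqref{dual:poly2:exp:c} exists exactly when $\Cexp(k,p)\le r$, and the compactness argument in that proof already produces such a polynomial at $r=\Cexp(k,p)$ itself. Each of the six listed properties depends only on the roots and the leading coefficient of $q$, hence is invariant under increasing $r$, while any solution for $r=\Cexp(k,p)$ stays a solution for every larger $r$ (the monotonicity in $r$ recorded in the proof of Lemma~\ref{lem:dualcond:exp}). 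Thus the existence claim for arbitrary $r\ge\Cexp(k,p)$ reduces to exhibiting one optimal polynomial with the six properties, while the ``moreover'' clause asserts that, once $\Cexp(k,p)>0$, no optimal polynomial can lack any of them. Writing $q=c\prod_m(x-\lambda_m)$, I would use Lemma~\ref{lem:dual:roots} to encode \eqref{dual:poly2:exp:a} by the sign quantity $c\cdot(-1)^{s(j)}$ and \eqref{dual:poly2:exp:b} by the logarithmic-derivative sum $\sum_m 1/(j-\lambda_m)$, so that every step becomes a statement about relocating roots.

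The engine of the proof is a family of root modifications, each preserving \eqref{dual:poly2:exp:a} and \eqref{dual:poly2:exp:c} and not increasing $\max_{1\le j\le k}\sum_m 1/(j-\lambda_m)$, with a strict decrease whenever the modification is genuine. By Lemma~\ref{lem:dualcond:exp} a strict decrease yields a solution for some $r'<\Cexp(k,p)$, impossible when $\Cexp(k,p)>0$; read in the forward direction the same operations turn an arbitrary solution into one with the desired feature, which gives existence and also covers the degenerate case $\Cexp(k,p)=0$, where the contradiction is vacuous but the construction survives. The basic operation is to adjoin a real root $a>k$ after re-signing: since $a-x>0$ on $[0,k]$ this leaves \eqref{dual:poly2:exp:a} and \eqref{dual:poly2:exp:c} intact (the extra factor flips $c$ and raises $s(j)$ by one, so by \eqref{roots:s1} the sign quantity is unchanged) while adjoining the strictly negative term $1/(j-a)$ to every sum.

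I would deploy this operation and its variants property by property. Property~\ref{prop1} comes from Lemma~\ref{lem:aux:exp}, which already supplies a degree-$p$ solution with $c=\pm1$ and all roots in $\mathcal{H}=\{z:\abs{\Im(z)}\le(k-1)/2,\ 0\le\Re(z)\le k\}$; that every optimal $q$ has degree exactly $p$ follows because a deficient degree could be filled by the $a>k$ factor above, strictly lowering the sums. The same device applied to a surplus factor $x$ (whose term $1/j$ is positive) forces the root $0$ to be simple, giving the first half of property~\ref{prop5}. To reach property~\ref{prop2} I would replace any root with $\Re<1$ (other than $0$) by a real root beyond $k$, re-signing to keep \eqref{dual:poly2:exp:a}; both the removed and the inserted terms move each sum in the favourable direction. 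For property~\ref{prop3} a non-integer real root lying in an interval $(j_0,j_0+1)$ is slid to an adjacent integer, the direction chosen so that the sign pattern $c\cdot(-1)^{s(j)}$ is preserved and no sum increases. Finally, an interior integer root $1\le j\le k-1$ is forced to have multiplicity exactly two (property~\ref{prop5}): multiplicity greater than two wastes degree that the $a>k$ factor could spend more profitably, while an odd multiplicity produces a sign change that, together with \eqref{roots:s1} and the tightness of \eqref{dual:poly2:exp:b} at the active integers, contradicts optimality.

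The crux is the boundary point $j=k$ and property~\ref{prop4}, because \eqref{dual:poly2:exp:b} is imposed at $j=k$ while the conjugate-pair estimates used above (and inside Lemma~\ref{lem:aux:exp}) only control the sums for $1\le j\le k-1$; the root at $k$ must therefore be analysed by hand. Here I would run a parity count on $\deg q=p$: the simple root at $0$ contributes an odd amount, every interior real root is now doubled and the non-real roots occur in conjugate pairs, so all contributions except that at $k$ are even, and matching the parity of $p$ forces multiplicity one at $k$ when $p$ is even and even multiplicity when $p$ is odd. Property~\ref{prop6} then follows from properties~\ref{prop3}, \ref{prop5} and \ref{prop4}: the doubled interior roots and the conjugate-pair factors $(x-\Re\mu)^2+(\Im\mu)^2$ never change sign, so $q$ keeps one sign on $(0,k)$, and evaluating the sign just to the right of $0$ through the count $s(0)$ shows it is non-negative. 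I expect this endpoint bookkeeping to be the main obstacle: keeping \eqref{dual:poly2:exp:a} and \eqref{dual:poly2:exp:b} simultaneously satisfied at $j=k$ while the interior modifications are applied, and pinning the parity of $p$, is exactly where the sign and derivative conditions interact most tightly and where the even/odd dichotomy of property~\ref{prop4} is forced.
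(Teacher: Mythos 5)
Your overall strategy coincides with the paper's: reduce to $r=\Cexp(k,p)$ by monotonicity in $r$, then use root surgery together with Lemma~\ref{lem:dualcond:exp} to show that any violation of a listed property would produce a solution of \eqref{dual:poly2:exp:a}--\eqref{dual:poly2:exp:c} at some $r'<\Cexp(k,p)$, a contradiction when $\Cexp(k,p)>0$. For that case your operations are workable variants of the paper's (the paper relocates stray roots to the point $k$ rather than to a point $a>k$, but for contradiction purposes either choice gives the needed strict decrease of the logarithmic-derivative sums). The genuine gap is the degenerate case $\Cexp(k,p)=0$, which you wave through with ``the contradiction is vacuous but the construction survives.'' It does not survive when $p\ge 2k+1$. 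Properties \ref{prop2}, \ref{prop3}, \ref{prop5} and \ref{prop4} cap the degree carried by real roots at $1+2(k-1)+2=2k+1$ (the root $0$ simple, each of $1,\dots,k-1$ at most double, $k$ at most double), so any degree-$p$ polynomial having all six properties with $p>2k+1$ must contain non-real conjugate pairs with real parts in $[1,k]$. Every operation in your toolkit either moves real roots or adjoins real roots; none of them creates a conjugate pair, so starting from an arbitrary solution at $r=0$ your forward construction cannot terminate at a polynomial with all six properties. The paper covers exactly this case by a separate explicit construction: for odd $p$ it takes $q(x)=x(x-k+i)^{(p-2k-1)/2}(x-k-i)^{(p-2k-1)/2}\prod_{j=1}^{k}(x-j)^2$ (and a similar formula for even $p$), dumping the surplus degree into conjugate pairs at $k\pm i$ and verifying the system directly. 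Some such construction is indispensable in your argument as well.

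Two smaller defects concern the constructive direction, which is the only direction available once $\Cexp(k,p)=0$. First, adjoining a root at $a>k$, or replacing a root with $\Re<1$ by a root ``beyond $k$,'' yields a polynomial that itself violates property~\ref{prop2}; constructively the new roots must be placed at $k$ exactly, and the resulting multiplicity pile-up at $k$ must then be redistributed down the integers --- precisely the cascade that exhausts itself when $p\ge 2k+1$. Second, your parity argument for property~\ref{prop4} only yields that the multiplicity of $k$ is odd (for even $p$) or even (for odd $p$); to conclude it equals $1$ or $2$ you also need the multiplicity cap at $j=k$, which your surgery establishes only for interior integers, and the simplicity of the root $0$ (first half of property~\ref{prop5}) likewise needs its own relocation argument rather than the interior one. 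These last points are repairable within your framework; the missing $p\ge 2k+1$ construction is the real hole.
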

\begin{proof}
  \textbf{Case $r = \Cexp(k, p) > 0$.}
  Suppose that positive integers $k, p$ are such that $\Cexp(k, p) > 0$.
  We know that $\Cexp(k, p) < \infty$, see the proof of Lemma \ref{lem:dualcond:exp}.
  Let $q(x) := c \cdot \prod_{m=1}^{p_0} (x-\lambda_m)$ be a non-zero real polynomial of degree $p_0 \le p$ that satisfies
  the system \eqref{dual:poly2:exp:a}--\eqref{dual:poly2:exp:c} with $r = \rho_0 := \Cexp(k, p)$;
  the existence of such a polynomial is guaranteed by Lemma \ref{lem:dualcond:exp}.
  Due to condition \eqref{dual:poly2:exp:c}, $\lambda_{m_0} = 0$ for some $1 \le m_0 \le p_0$.
  Now we show that $q$ has all the required properties.

  \textbf{Proofs for properties \ref{prop1} and \ref{prop2}.}
  Suppose that either property \ref{prop1} or \ref{prop2} does not hold for $q$.
  Let $M := |\{m: 1 \le m \le p_0, \text{ and } m \ne m_0, \text{ and } \lambda_m \not\in [1, k] \}|$ be the number of roots of $q$ with real parts outside the interval $[1,k]$, not counting $\lambda_{m_0}$.
  Let us take
  \begin{align*}
    \tilde{c} & := (-1)^{M+p-p_0}\cdot c \\
    \tilde{\lambda}_m & :=
      \begin{cases}
        \lambda_m & \text{ if } 1 \le m \le p_0 \text{ and } \Re(\lambda_m) \in [1, k], \text{ or if } m = m_0 \\
        k & \text{ if } 1 \le m \le p_0 \text{ and } m \ne m_0 \text{ and } \Re(\lambda_m) \not\in [1, k], \text{ or if } p_0 < m \le p
      \end{cases}\\
      \tilde{q}(x) & := \tilde{c}\cdot \prod_{m=1}^{p} (x-\tilde{\lambda}_m).
  \end{align*}
  By following the argument in the proof of Lemma \ref{lem:aux:exp},
  we can show that $\tilde{q}$ satisfies the system \eqref{dual:poly2:exp:a}--\eqref{dual:poly2:exp:c}
  with $r = \rho_1$ for some $0 \le \rho_1 < \rho_0$.
  Now Lemma \ref{lem:dualcond:exp} yields $\Cexp(k, p) \le \rho_1 < \rho_0$, which
  contradicts our choice of $q$.

  \textbf{Proof for property \ref{prop3}.}
  Suppose that property \ref{prop3} does not hold for $q$.
  Let us take
  \begin{align*}
    \tilde{\lambda}_m & :=
    \begin{cases}
      \lambda_m & \text{ if } 1 \le m\le p_0 \text{ and } \lambda_m \not\in \Real \\
      \lfloor \lambda_m \rfloor & \text{ if } 1 \le m\le p_0 \text{ and } \lambda_m \in \Real
    \end{cases} \\
    \tilde{q}(x) & := c \cdot \prod_{m=1}^{p_0} (x-\tilde{\lambda}_m).
  \end{align*}
  Let $s$ and $\tilde{s}$ be as in Lemma \ref{lem:dual:roots} with respect to
  $q$ and $\tilde{q}$, respectively.
  We can check that $\tilde{s}(j) = s(j)$ for all $0 \le j \le k$.
  Moreover, if $j$ is a root of $q$ for some integer $1 \le j \le k$ then
  $j$ is a root of $\tilde{q}$ too, with at least the same multiplicity.
  If, however, $j$ is not a root of $\tilde{q}$ for some integer $0 \le j \le k$
  then, assuming that at least one root has moved, we have
  $\sum_{m = 0}^{p_0} (j-\lambda_m)^{-1} > \sum_{m = 0}^{p_0} (j-\tilde{\lambda}_m)^{-1}$.
  In view of Lemma \ref{lem:dual:roots}, the polynomial $\tilde{q}$ satisfies
  the system \eqref{dual:poly2:exp:a}--\eqref{dual:poly2:exp:c} with $r = \rho_1$
  for some $0 \le \rho_1 < \rho_0$, which again leads to a contradiction.

  \textbf{Proof for property \ref{prop5}.}
  We already know that properties \ref{prop1}--\ref{prop3} hold for $q$.
  Suppose that $\lambda_{m_1} = \lambda_{m_2} = \lambda_{m_3} = j$ for some
  integer $1 \le j \le k$ and indices $1 \le m_1 < m_2 < m_3 \le p$.
  Then we can take
  \begin{align*}
    \tilde{\lambda}_m & :=
    \begin{cases}
      \lambda_m & \text{ if } m \ne m_3 \\
      \lambda_m-1 & \text{ if } m = m_3
    \end{cases} \\
    \tilde{q}(x) & := c \cdot \prod_{m=1}^{p} (x-\tilde{\lambda}_m),
  \end{align*}
  and, as before, we can check that the polynomial $\tilde{q}$ satisfies
  the system \eqref{dual:poly2:exp:a}--\eqref{dual:poly2:exp:c} with $r = \rho_1$
  for some $0 \le \rho_1 < \rho_0$, which leads to a contradiction.
  Hence if $j$ is a root of $q$ for some integer $1 \le j \le k$ then its multiplicity
  is at most $2$.

  Suppose now that $j_0$ is a simple root of $q$ for some integer $1 \le j_0 < k$,
  and suppose further that $j$ is not a double root of $q$ for all integers $j_0 < j \le k$.
  Let $s$ be as in Lemma \ref{lem:dual:roots}, and
  let $j_0 < j_1 \le k$ be the smallest integer that is not a double root of $q$.
  Then $c\cdot (-1)^{s(j_0)} = - c \cdot (-1)^{s(j_1)}$,
  and in view of statements \eqref{roots:s1} and \eqref{roots:s2} of Lemma \ref{lem:dual:roots},
  $q$ does not satisfy the inequalities \eqref{dual:poly2:exp:a} and \eqref{dual:poly2:exp:b}
  for both $j=j_0$ and $j=j_1$.
  Therefore if $j_0$ is a simple root of $q$ for some integer $1 \le j_0 < k$
  then $j$ is a double root of $q$ for all $j_0 < j \le k$,
  and thus $\lambda_{m_1} = k$ for some index $1 \le m_1 \le p$. Now we can take
  \begin{align*}
    \tilde{\lambda}_m & :=
    \begin{cases}
      \lambda_m & \text{ if } m \ne m_1 \\
      j_0 & \text{ if } m = m_1
    \end{cases} \\
    \tilde{q}(x) & := c \cdot \prod_{m=1}^{p} (x-\tilde{\lambda}_m),
  \end{align*}
  and check that $\tilde{q}$ satisfies the system
  \eqref{dual:poly2:exp:a}--\eqref{dual:poly2:exp:c} with $r = \rho_1$
  for some $0 \le \rho_1 < \rho_0$, which again leads to a contradiction.

  \textbf{Proofs for properties \ref{prop4} and \ref{prop6}}
  Property \ref{prop4} is a consequence of properties \ref{prop1}-\ref{prop3} and \ref{prop5}, and the fact that non-real
  roots come in complex conjugate pairs.
  Property \ref{prop6} is a consequence of properties \ref{prop3} and \ref{prop5}.

  \textbf{Case $r > \Cexp(k, p) > 0$.}
  We can see that if $\Cexp(k, p)> 0$ and $q$ satisfies the system
  \eqref{dual:poly2:exp:a}--\eqref{dual:poly2:exp:c} with $r=\Cexp(k, p)$
  then $q$ satisfies the same system with any $r > \Cexp(k, p)$, and thus
  the claim of the lemma holds true in this case.

  \textbf{Case $\Cexp(k, p) = 0$ and $p < 2k+1$.}
  In this case, we can construct a non-zero real polynomial $\tilde{q}$ of degree $p$
  that satisfies the system \eqref{dual:poly2:exp:a}--\eqref{dual:poly2:exp:c}
  with $r = 0$ and has properties \ref{prop1}--\ref{prop6}, by applying the transformations presented
  in the proof of the first case to an arbitrary non-zero real polynomial $q$ of degree $p_0 \le p$
  that satisfies the same system with $r=0$.

  \textbf{Case $\Cexp(k, p) = 0$ and $p \ge 2k+1$.}
  For odd $p$ we define
  $q(x) := x(x-k+i)^{\frac{p-2k-1}{2}}(x-k-i)^{\frac{p-2k-1}{2}}\cdot \prod_{j=1}^k (x-j)^2$,
  and for even $p$ we define
  $q(x) := x(x-k+i)^{\frac{p-2k}{2}}(x-k-i)^{\frac{p-2k}{2}} (k-x)\cdot \prod_{j=1}^{k-1} (x-j)^2$.
  We can check that these polynomials satisfy all the required conditions.
\end{proof}

\begin{lem} \label{lem:dualpropx:imp}
  Let $k$ and $p \ge 2$ be positive integers and let $r \ge \Cimp(k, p)$ be a real number.
  Then there exists a non-zero real polynomial $q$ that solves the system
  \eqref{dual:poly2:exp:a}--\eqref{dual:poly2:exp:d}, and has the following properties.
  \begin{enumerate}
    \item The degree of $q$ is exactly $p$.
    \item The real parts of all roots of $q$ except for $0$ lie in the interval $[1,k]$.
    \item All real roots of $q$ are integers.
    \item The multiplicity of the root $0$ is $2$. For $1\leq j \leq k-1$,
      if $j$ is a root of $q$ then its multiplicity is $2$.
    \item For odd $p$, $k$ is a root of $q$ of multiplicity $1$. For even $p$, if $k$ is a root of $q$ then its multiplicity is $2$.
    \item The polynomial $q$ is non-negative on the interval $[0, k]$.
  \end{enumerate}
\end{lem}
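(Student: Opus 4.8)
The plan is to mirror the structure of the proof of Lemma~\ref{lem:dualpropx:exp} as closely as possible, since Lemmas~\ref{lem:aux:imp} and \ref{lem:dualcond:imp} are the exact implicit analogues of the explicit results used there. First I would reduce to the main case $r = \Cimp(k,p) > 0$; the cases $r > \Cimp(k,p)$ and $\Cimp(k,p)=0$ are handled exactly as in the explicit lemma (monotonicity of the system in $r$, and an explicit construction when $\Cimp(k,p)=0$). For the main case, Lemma~\ref{lem:dualcond:imp} guarantees a non-zero real polynomial $q = c\cdot\prod_{m=1}^{p_0}(x-\lambda_m)$ of degree $p_0\le p$ satisfying the system \eqref{dual:poly2:exp:a}--\eqref{dual:poly2:exp:d} with $r = \rho_0 := \Cimp(k,p)$. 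The two new features relative to the explicit case are the extra condition $q'(0)=0$, which by \eqref{dual:poly2:exp:c}--\eqref{dual:poly2:exp:d} forces $0$ to be a root of multiplicity at least two, and the correspondingly modified Property~4 (multiplicity of $0$ equals $2$) and parity statement in Property~5.

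For each property I would apply the same root-manipulation strategy: assume the property fails, construct a modified polynomial $\tilde{q}$ by relocating or rounding roots (pushing roots with real part outside $[1,k]$ to $k$, rounding real roots down to $\lfloor\lambda_m\rfloor$, collapsing multiplicities), and invoke Lemma~\ref{lem:dual:roots} to verify that $\tilde{q}$ still satisfies \eqref{dual:poly2:exp:a}--\eqref{dual:poly2:exp:d} but with a strictly smaller $r = \rho_1 < \rho_0$, contradicting $\Cimp(k,p)=\rho_0$ via Lemma~\ref{lem:dualcond:imp}. The key point that makes every such relocation admissible is that all the transformations must preserve the double root at $0$ so that conditions \eqref{dual:poly2:exp:c} and \eqref{dual:poly2:exp:d} are maintained; concretely, I would single out two indices $m_0\ne m_1$ with $\lambda_{m_0}=\lambda_{m_1}=0$ (these exist by the forced double root at $0$) and freeze them throughout, applying the root moves only to the remaining $p_0-2$ roots, just as Lemma~\ref{lem:aux:imp} freezes $0$ at $m_1$.

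The proof of Property~4 needs the small extra observation that $0$ cannot be a root of multiplicity three or higher: if it were, one could decrease the multiplicity of $0$ (or shift one copy of $0$ slightly while retaining the double root) to strictly reduce the left-hand side of \eqref{dual:roots:exp:b} at every $1\le j\le k$, lowering $r$ and contradicting optimality; combined with the forced multiplicity $\ge 2$, this pins the multiplicity at exactly $2$. The parity statement in Property~5 then follows from Properties~1--4 together with the fact that non-real roots occur in complex conjugate pairs: counting the total degree $p$ against a double root at $0$, double roots in $[1,k-1]$, an even number of non-real roots, and a possible root at $k$, a parity count forces $k$ to be a simple root when $p$ is odd and allows multiplicity $2$ when $p$ is even. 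Property~6 is immediate from Properties~3 and~4, since a real polynomial that is non-negative on the integers in $[0,k]$ and whose only real roots in the open interval have even multiplicity cannot change sign there.

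\medskip

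The main obstacle I anticipate is verifying the sign and logarithmic-derivative bookkeeping of Lemma~\ref{lem:dual:roots} for the transformed polynomials while the two frozen roots at $0$ are present: the parity of $\tilde{s}(j)$ and the inequality \eqref{dual:roots:exp:b} must be checked at the endpoints $j=0$ and $j=k$ with particular care, since that is exactly where the implicit conditions \eqref{dual:poly2:exp:c}--\eqref{dual:poly2:exp:d} and the boundary behaviour at $k$ interact. In all other respects the argument is routine and parallels Lemma~\ref{lem:dualpropx:exp} line by line, so the proof would end by remarking that the remaining cases and the verifications proceed as in that lemma.
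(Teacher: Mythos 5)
Your proposal is correct and takes essentially the same approach as the paper: the paper's entire proof of this lemma reads ``The proof is essentially the same as for Lemma~\ref{lem:dualpropx:exp},'' and your plan is precisely that explicit-case argument adapted to the implicit setting. The modifications you single out---freezing the forced double root at $0$ through all root-relocation steps, pinning its multiplicity at exactly $2$ via the optimality/contradiction argument, and the parity count that swaps the behaviour at $k$ relative to the explicit case---are exactly the details the paper's one-line proof leaves to the reader.
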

  Moreover, if $\Cimp(k, p) > 0$ and $r = \Cimp(k, p)$ then every non-zero real
  polynomial $q$ of degree at most $p$ that solves the system
  \eqref{dual:poly2:exp:a}--\eqref{dual:poly2:exp:d} has these properties. 
\begin{proof}
  The proof is essentially the same as for Lemma~\ref{lem:dualpropx:exp}.
\end{proof}

The following two lemmas provide a way to analyze both the coefficient structure
and the uniqueness of ``optimal'' SSP LM methods with a given number of steps
and order of consistency.

\begin{lem}
  \label{lem:dualoptimal:exp}
  Let positive integers $k,p$  be such that $\Cexp(k,p) > 0$.
  Assume that $(\delta_j)_{j=1}^{k}$ and $(\beta_j)_{j=1}^{k}$ are the coefficients of an
  explicit LM method of order $p$ with $k$ steps and with SSP coefficient $\Cexp(k, p)$,
  and let $q$ be any non-zero polynomial of degree at most $p$ that satisfies the system
  \eqref{dual:poly2:exp:a}--\eqref{dual:poly2:exp:c} with $r=\Cexp(k,p)$.
  Let us define the sets
  $\mathcal{I} := \{j : 1 \le j \le k, \text{ and } q(j) = 0\}$ and
  $\mathcal{J} := \{j : 1 \le j \le k, \text{ and } -q'(j)+\Cexp(k,p) q(j) = 0\}$,
  and let us introduce the vectors $a_j := (j^m)_{m=0}^{p}$ for $0 \le j \le k$,
  and $b_j := (-m j^{m-1}+\Cexp(k,p) j^m)_{m=0}^{p}$ for $1 \le j \le k$.
  Then the following statements hold.
  \begin{enumerate}[(i)]
    \item \label{dualoptimal:exp:s1}
      If $\delta_j \ne 0$ for some $1\le j\le k$, then
        $q(j) = 0$.
    \item \label{dualoptimal:exp:s2}
      If $\beta_j \ne 0$ for some $1\le j\le k$, then
      $-q'(j)+\Cexp(k,p) q(j) = 0$.
    \item \label{dualoptimal:exp:s3}
      If the vectors $a_j$ for $j \in \mathcal{I}$ and $b_j$ for $j \in \mathcal{J}$
      are linearly independent then there exists no other explicit LM method of
      order $p$ with $k$ steps and SSP coefficient $\Cexp(k,p)$.
    \item \label{dualoptimal:exp:s4}
      The polynomial $q$ can be chosen so that the vectors $a_j$ for $j \in \mathcal{I}$
      and $b_j$ for $j \in \mathcal{J}$ span a $p$ dimensional subspace of $\Real^{p+1}$.
  \end{enumerate}
\end{lem}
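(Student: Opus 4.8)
The plan is to recognize the constraint system of \eqref{lp:primal:exp} with $r=\Cexp(k,p)$ as a primal LP in standard form and to read off statements \eqref{dualoptimal:exp:s1} and \eqref{dualoptimal:exp:s2} as instances of complementary slackness. First I would observe that the order and SSP conditions defining an explicit $k$-step method of order $p$ with $\C=\Cexp(k,p)$ are exactly $\sum_{j=1}^k \delta_j a_j + \sum_{j=1}^k \beta_j b_j = a_0$ with $\delta_j,\beta_j\ge 0$, since the $m$-th component of this vector identity reproduces the $m$-th row of \eqref{lp:primal:exp} and $a_0=(0^m)_{m=0}^p$. Writing $A:=[\,a_1\cdots a_k\mid b_1\cdots b_k\,]$ and $x:=(\delta_1,\dots,\delta_k,\beta_1,\dots,\beta_k)^\top$, this is the feasibility problem $Ax=a_0,\ x\ge 0$, which I regard as the primal LP of Proposition \ref{prop:lp-duality} with zero objective $c=0$ and $b=a_0$. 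A coefficient vector $y=(y_m)_{m=0}^p$ encodes a polynomial $q(x)=\sum_{m=0}^p y_m x^m$ for which $a_j^\top y=q(j)$, $b_j^\top y=-q'(j)+\Cexp(k,p)\,q(j)$ and $a_0^\top y=q(0)$; hence the dual ``minimize $a_0^\top y$ subject to $A^\top y\ge 0$'' is precisely the search for a polynomial satisfying \eqref{dual:poly2:exp:a}, \eqref{dual:poly2:exp:b} and minimizing $q(0)$.

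Second, I would pin down optimality on both sides. The hypothesised method provides a primal-feasible $x$, so the primal optimum is $0$; by weak duality $a_0^\top y=q(0)\ge 0$ for every dual-feasible $q$, and every $q$ solving \eqref{dual:poly2:exp:a}--\eqref{dual:poly2:exp:c} attains $q(0)=0$, so such $q$ are exactly the dual optimizers (Lemma \ref{lem:dualcond:exp} guarantees one exists). Both optima are finite and equal to $0$, so Proposition \ref{prop:compslack} applies to the given method and any such $q$: $x_i>0$ forces $(A^\top y)_i=0$. For the components indexed by $\delta_j$ this reads $q(j)=0$, giving \eqref{dualoptimal:exp:s1}, and for those indexed by $\beta_j$ it reads $-q'(j)+\Cexp(k,p)\,q(j)=0$, giving \eqref{dualoptimal:exp:s2}. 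Statement \eqref{dualoptimal:exp:s3} then follows: by \eqref{dualoptimal:exp:s1}--\eqref{dualoptimal:exp:s2} every optimal method is supported on $\mathcal I$ and $\mathcal J$, so its coefficients solve $\sum_{j\in\mathcal I}\delta_j a_j+\sum_{j\in\mathcal J}\beta_j b_j=a_0$; if the vectors $a_j\ (j\in\mathcal I)$ and $b_j\ (j\in\mathcal J)$ are linearly independent, this representation of $a_0$ is unique (Proposition \ref{prop:rouchecapelli}), whence the method is unique.

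For \eqref{dualoptimal:exp:s4} I would pass to orthogonal complements. Let $S:=\operatorname{span}\bigl(\{a_j\}_{j\in\mathcal I}\cup\{b_j\}_{j\in\mathcal J}\bigr)$; since each active vector is orthogonal to $y$, we have $S\subseteq y^\perp$ and thus $\dim S\le p$, with equality iff $S^\perp=\{w: a_j^\top w=0\ (j\in\mathcal I),\ b_j^\top w=0\ (j\in\mathcal J)\}$ is one-dimensional. The decisive observation is that $a_0\in S$, because the optimal method writes $a_0$ as a nonnegative combination of the active $a_j$ and $b_j$ (via \eqref{dualoptimal:exp:s1}--\eqref{dualoptimal:exp:s2}); consequently $S^\perp\subseteq a_0^\perp$. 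The polynomials solving \eqref{dual:poly2:exp:a}--\eqref{dual:poly2:exp:c} form a closed convex cone $K$ in $\Real^{p+1}$, and I would choose $q$ to generate an extreme ray of $K$. At an extreme ray the normals of the active constraints, namely $a_0$ together with $\{a_j:j\in\mathcal I\}\cup\{b_j:j\in\mathcal J\}$, span a $p$-dimensional subspace; since $a_0\in S$ this span equals $S$, giving $\dim S=p$.

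The only non-routine point, and the one I expect to need the most care, is the existence of such an extreme ray, i.e.\ pointedness of $K$. I would show $K$ contains no line: a nonzero $q$ with $\pm q\in K$ would satisfy $q(j)=q'(j)=0$ for $1\le j\le k$ and $q(0)=0$, forcing $x\prod_{j=1}^k(x-j)^2\mid q$ and hence $\deg q\ge 2k+1$. Under the standing hypothesis $\Cexp(k,p)>0$ one has $p\le 2k$ (the construction in the final case of Lemma \ref{lem:dualpropx:exp} shows $p\ge 2k+1$ forces $\Cexp(k,p)=0$), so no such $q$ exists and $K$ is pointed; being nontrivial, it possesses an extreme ray. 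The remaining work is the bookkeeping that an extreme ray makes the active normals have rank exactly $p$ and that $a_0$ lies in their span, whereas parts \eqref{dualoptimal:exp:s1}--\eqref{dualoptimal:exp:s3} are direct transcriptions of complementary slackness and a rank argument.
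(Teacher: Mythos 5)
Your proposal is correct. For statements (i)--(iii) it coincides with the paper's own proof: the same primal--dual pair (zero objective over \eqref{lp:primal:exp} with $r=\Cexp(k,p)$ versus minimizing $y_0=q(0)$ over \eqref{dual:lp:exp:a}--\eqref{dual:lp:exp:b}), the observation that every $q$ solving \eqref{dual:poly2:exp:a}--\eqref{dual:poly2:exp:c} is a dual optimizer, complementary slackness (Proposition \ref{prop:compslack}) for (i)--(ii), and Rouch\'e--Capelli (Proposition \ref{prop:rouchecapelli}) for (iii). For statement (iv) you take a genuinely different, though closely parallel, route: you invoke the standard theory of pointed polyhedral cones --- pointedness of the solution cone $K$, existence of an extreme ray, and the rank characterization of extreme rays --- whereas the paper proves the same fact by hand. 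It picks $q$ binding the maximal number of inequalities in \eqref{dual:poly2:exp:a}--\eqref{dual:poly2:exp:b}, notes that the rank of the active rows (including $a_0$) is at most $p$ because the coefficient vector $\tilde y$ of $q$ is a nonzero kernel element, and rules out rank less than $p$ via Rouch\'e--Capelli: an independent kernel vector $\hat y$ could be combined with $\tilde y$ to produce an optimal dual solution binding strictly more inequalities, contradicting maximality. Your pointedness check (a line in $K$ would force $x\prod_{j=1}^k(x-j)^2$ to divide $q$, impossible since $\Cexp(k,p)>0$ gives $p\le 2k$) plays exactly the role of the paper's preliminary step that at least one inequality must be strict, where the contradiction is instead drawn from Lemma \ref{lem:dualcond:exp} with $r=0$; both work. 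The two proofs also finish identically, using consistency of the primal system --- i.e., the existence of the optimal method --- to place $a_0$ in the span of the active vectors and thereby drop it from the spanning set. What your route buys is brevity and a clean conceptual statement (optimal dual polynomials with maximal active sets are extreme-ray generators); what it costs is reliance on two polyhedral facts (existence of an extreme ray of a nonzero pointed polyhedral cone, and the rank-$p$ characterization) that are not among the propositions quoted in the paper, so a fully self-contained write-up would still need to prove them --- and their standard proofs are essentially the paper's maximality-plus-perturbation argument.
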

\begin{proof}
  \textbf{The proofs of statements \eqref{dualoptimal:exp:s1} and \eqref{dualoptimal:exp:s2} of the Lemma.}
  Suppose that positive integers $k$ and $p$, coefficients $(\tilde{\delta}_j)_{j=1}^{k}$ and $(\tilde{\beta}_j)_{j=1}^{k}$,
  and the polynomial $\tilde{q}$ satisfy the conditions of the lemma.
  Let us consider the following primal--dual pair of LP problems.
  \begin{align}
    \text{Maximize } & 0 \text{ with respect to } (\delta_j)_{j=1}^{k}, (\beta_j)_{j=1}^{k}
      \text{ subject to \eqref{lp:primal:exp} with } r = \Cexp(k, p). \label{lp:primal:a} \\
    \text{Minimize } & y_0 \text{ with respect to } (y_m)_{m=0}^p \text{ subject to \eqref{dual:lp:exp:a}
      and \eqref{dual:lp:exp:b} with } r = \Cexp(k, p) . \label{lp:dual:a}
  \end{align}
  On the one hand $(\tilde{\delta}_j)_{j=1}^{k}$, $(\tilde{\beta}_j)_{j=1}^{k}$ is an optimal solution to
  \eqref{lp:primal:a}, and on the other hand $y_m := 0$ for $0 \le m \le p$ is a feasible solution to \eqref{lp:dual:a}.
  Therefore both \eqref{lp:primal:a} and \eqref{lp:dual:a} are feasible, and thus
  the Duality theorem in LP (Proposition ~\ref{prop:lp-duality}) implies that the two optima are equal,
  hence a feasible solution $(y_m)_{m=0}^p$ to \eqref{lp:dual:a} is optimal if and only if
  $y_0 = 0$.

  Let $\tilde{y}_m := \tilde{q}^{(m)}(0) / m!$ for $0 \le m \le p$ be the coefficients of $\tilde{q}$.
  Then $\tilde{y}$ is an optimal solution to \eqref{lp:dual:a},
  because $\sum_{m=0}^p \tilde{y}_m j^m = \tilde{q}(j) \ge 0$ for all $1 \le j \le k$,
  and $\sum_{m=0}^p (-mj^{m-1} + \Cexp(k,p) j^m)\tilde{y}_m = -\tilde{q}'(j) + \Cexp(k,p) \tilde{q}(j) \ge 0$ for all $1 \le j \le k$,
  and $\tilde{y}_0 = \tilde{q}(0) = 0$.

  Suppose that $\tilde{\delta}_j \ne 0$ for some $1 \le j \le k$. Then the Complementary
  slackness theorem (Proposition~\ref{prop:compslack}) implies that
  $\tilde{q}(j) = \sum_{m=0}^pj^m\tilde{y}_m = 0$. Similarly, if $\tilde{\beta}_j \ne 0$ for some $1\le j \le k$
  then the Complementary slackness theorem implies
  $-{\tilde{q}}'(j)+\Cexp(k,p) \tilde{q}(j) = 0$. Hence the proofs of statements
  \eqref{dualoptimal:exp:s1} and \eqref{dualoptimal:exp:s2} are complete.

  \textbf{The proof of statement \eqref{dualoptimal:exp:s3} of the Lemma.}
  Suppose that positive integers $k$ and $p$, coefficients $(\tilde{\delta}_j)_{j=1}^{k}$ and $(\tilde{\beta}_j)_{j=1}^{k}$,
  and the polynomial $\tilde{q}$ satisfy the conditions of the lemma.
  Suppose further that the vectors $a_j$ for $j \in \mathcal{I}$ and $b_j$ for
  $j \in \mathcal{J}$ are linearly independent.
  Consider the system of linear equations obtained from the equality
  constraints of the LP problem \eqref{lp:primal:a} by fixing
  $\delta_j$ for $j \not\in \mathcal{I}$ and $\beta_j$ for $j\not\in \mathcal{J}$
  to zero.
  In view of statements \eqref{dualoptimal:exp:s1} and \eqref{dualoptimal:exp:s2} of the lemma, this system is satisfied by
  the possibly non-zero coefficients
  $(\delta_j)_{j \in \mathcal{I}}$ and $(\beta_j)_{j\in \mathcal{J}}$
  of any optimal method.
  Since the matrix of this system is formed by the column vectors
  $a_j^\transp$ for $j\in \mathcal{I}$ and $b_j^\transp$ for $j \in \mathcal{J}$,
  its columns are linearly independent, and thus
  the Rouch\'e--Capelli theorem (Proposition \ref{prop:rouchecapelli}) implies
  that $(\tilde{\delta}_j)_{j \in \mathcal{I}}$, $(\tilde{\beta}_j)_{j\in \mathcal{J}}$
  is a unique solution to this system.
  From this, the third statement of the lemma follows.

  \textbf{The proof of statement \eqref{dualoptimal:exp:s4} of the Lemma.}
  Suppose that positive integers $k$ and $p$, coefficients $(\tilde{\delta}_j)_{j=1}^{k}$ and $(\tilde{\beta}_j)_{j=1}^{k}$,
  and the polynomial $\tilde{q}$ satisfy the conditions of the lemma.
  Suppose further that the polynomial $\tilde{q}$ is chosen so that it satisfies
  the maximum number of inequalities in \eqref{dual:poly2:exp:a}--\eqref{dual:poly2:exp:b}
  with equality.
  Let $\tilde{y}_m$ for $1 \le m \le p$ be as before.

  We show that $\tilde{y}$ satisfies at least one inequality in
  \eqref{dual:lp:exp:a}--\eqref{dual:lp:exp:b} with strict inequality.
  Suppose that is not the case, then for all $1 \le j \le k$ integers $j$ is
  a multiple root of $\tilde{q}$, and thus $\tilde{q}$ satisfies the system
  \eqref{dual:poly2:exp:a}--\eqref{dual:poly2:exp:c} with $r = 0$.
  Lemma~\ref{lem:dualcond:exp} leads to the contradiction $\Cexp(k,p) = 0$.

  Consider the homogeneous system of linear equations formed by the equation $y_0 = 0$ and by
  the equations corresponding to the binding inequalities in \eqref{dual:lp:exp:a}--\eqref{dual:lp:exp:b} with $r = \Cexp(k,p)$.
  The matrix of this system is formed by the row vectors
  $a_j$ for $j\in \mathcal{I}\cup\{0\}$ and $b_j$ for $j \in \mathcal{J}$.

  The rank of this matrix is at most $p$, since $\tilde{y}$ is a
  non-zero solution to the corresponding homogeneous system with $p+1$ variables.
  Suppose now that the rank of the matrix is less than $p$.
  Then, by the Rouch\'e--Capelli theorem (Proposition \ref{prop:rouchecapelli}),
  we have a $\hat{y}$ solution to the system such that
  $\hat{y}$ and $\tilde{y}$ are linearly independent, and
  $\hat{y}$ is an optimal solution to the LP problem \eqref{lp:dual:a}, too.
  Now, for appropriate real numbers $\tilde{c}$ and $\hat{c}$, the vector
  $\tilde{c}\cdot \tilde{y} + \hat{c}\cdot \hat{y}$ is a non-zero optimal
  solution to the LP problem \eqref{lp:dual:a}, and it satisfies more
  inequalities in \eqref{dual:lp:exp:a}--\eqref{dual:lp:exp:b} with equality
  than $\tilde{y}$ does. We can check that the corresponding polynomial
  $Q(x) := \sum_{m=0}^p(\tilde{c}\cdot \tilde{y}_m + \hat{c}\cdot \hat{y}_m)x^m$
  satisfies all the conditions of the lemma, and it satisfies more inequalities
  in \eqref{dual:poly2:exp:a}--\eqref{dual:poly2:exp:b} with equality than $\tilde{q}$
  does, which contradicts our choice of $\tilde{q}$.
  Hence the rank of the matrix is precisely $p$.

  Consider again the linear system for the possibly non-zero coefficients
  $(\delta_j)_{j \in \mathcal{I}}$ and $(\beta_j)_{j\in \mathcal{J}}$, obtained
  from the equality constraints of the LP problem \eqref{lp:primal:a}.
  The matrix of the system is formed by the column vectors
  $a_j^\transp$ for $j\in \mathcal{I}$ and $b_j^\transp$ for $j \in \mathcal{J}$,
  and the right hand side of the system is just $a_0^\transp$.
  Since this system is consistent, the Rouch\'e--Capelli theorem implies that
  the rank of its matrix is equal to the rank of its augmented matrix.
  We already know that the rank of its augmented matrix is precisely $p$,
  from which the proof of the statement follows.
\end{proof}

\begin{lem}
  \label{lem:dualoptimal:imp}
  Let positive integers $k$ and $p > 1$  be such that $\Cimp(k,p) > 0$.
  Assume that $(\delta_j)_{j=1}^{k}$ and $(\beta_j)_{j=0}^{k}$ are the coefficients of an
  implicit LM method of order $p$ with $k$ steps and with SSP coefficient $\Cimp(k, p)$,
  and let $q$ be any non-zero polynomial of degree at most $p$ that satisfies the system
  \eqref{dual:poly2:exp:a}--\eqref{dual:poly2:exp:d} with $r=\Cimp(k,p)$.
  Let us define the sets
  $\mathcal{I} := \{j : 1 \le j \le k, \text{ and } q(j) = 0\}$ and
  $\mathcal{J} := \{j : 0 \le j \le k, \text{ and } -q'(j)+\Cimp(k,p) q(j) = 0\}$,
  and let us introduce the vectors $a_j := (j^m)_{m=0}^{p}$ for $0 \le j \le k$,
  and $b_j := (-m j^{m-1}+\Cimp(k,p) j^m)_{m=0}^{p}$ for $1 \le j \le k$,
  and $b_0 = (0, -1, 0, \ldots, 0)$.
  Then the following statements hold.
  \begin{enumerate}[(i)]
    \item
      If $\delta_j \ne 0$ for some $1\le j\le k$, then
        $q(j) = 0$.
    \item
      If $\beta_j \ne 0$ for some $0\le j\le k$, then
      $-q'(j)+\Cimp(k,p) q(j) = 0$.
    \item
      If the vectors $a_j$ for $j \in \mathcal{I}$ and $b_j$ for $j \in \mathcal{J}$
      are linearly independent then there exists no other implicit LM method of
      order $p$ with $k$ steps and SSP coefficient $\Cimp(k,p)$.
    \item
      The polynomial $q$ can be chosen so that the vectors $a_j$ for $j \in \mathcal{I}$
      and $b_j$ for $j \in \mathcal{J}$ span a $p$ dimensional subspace of $\Real^{p+1}$.
  \end{enumerate}
\end{lem}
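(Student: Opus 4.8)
The plan is to mirror the proof of Lemma~\ref{lem:dualoptimal:exp} verbatim, adapting it only to accommodate the extra implicit coefficient $\beta_0$ and the corresponding extra dual inequality \eqref{dual:lp:imp:d}. First I would set up the primal--dual pair: the primal maximizes $0$ over $(\delta_j)_{j=1}^k$ and $(\beta_j)_{j=0}^k$ subject to \eqref{lp:primal:imp} with $r=\Cimp(k,p)$, while the dual minimizes $y_0$ over $(y_m)_{m=0}^p$ subject to \eqref{dual:lp:exp:a}, \eqref{dual:lp:exp:b} for $1\le j\le k$, and \eqref{dual:lp:imp:d}, again with $r=\Cimp(k,p)$. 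The assumed optimal method furnishes a feasible primal point and $y\equiv 0$ is feasible for the dual, so by the Duality theorem (Proposition~\ref{prop:lp-duality}) both optima equal $0$; hence a feasible dual vector is optimal iff $y_0=0$.

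Next I would verify that the coefficient vector $\tilde{y}_m := q^{(m)}(0)/m!$ of the given polynomial $q$ is dual-optimal. The three families of dual inequalities read $q(j)\ge 0$ and $-q'(j)+\Cimp(k,p)\,q(j)\ge 0$ for $1\le j\le k$, and $-q'(0)\ge 0$; these hold by \eqref{dual:poly2:exp:a}, \eqref{dual:poly2:exp:b}, and \eqref{dual:poly2:exp:d} respectively, while $y_0=q(0)=0$ by \eqref{dual:poly2:exp:c}. With $\tilde{y}$ optimal, the Complementary slackness theorem (Proposition~\ref{prop:compslack}) immediately gives (i) and (ii): $\delta_j\ne 0$ forces the $j$-th inequality \eqref{dual:lp:exp:a} tight, i.e.\ $q(j)=0$, and $\beta_j\ne 0$ forces \eqref{dual:lp:exp:b} tight for $1\le j\le k$, or \eqref{dual:lp:imp:d} tight for $j=0$. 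The case $j=0$ is automatic, since \eqref{dual:poly2:exp:d} makes $-q'(0)=0$, so $0\in\mathcal{J}$ always.

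For (iii), I would record that the column of $\delta_j$ in the primal equality system is $a_j$ and that of $\beta_j$ is $b_j$ for $0\le j\le k$, with right-hand side $a_0=(1,0,\dots,0)^\transp$. By (i)--(ii) the nonzero coefficients of any optimal method are supported on $\mathcal{I}$ and $\mathcal{J}$ and solve the subsystem whose columns are $\{a_j : j\in\mathcal{I}\}\cup\{b_j : j\in\mathcal{J}\}$; linear independence of these columns together with the Rouch\'e--Capelli theorem (Proposition~\ref{prop:rouchecapelli}) forces a unique solution. For (iv) I would choose $q$ to make the maximal number of inequalities in \eqref{dual:poly2:exp:a}--\eqref{dual:poly2:exp:b} tight and then consider the homogeneous system consisting of $y_0=0$ together with the binding dual inequalities, whose rows are $a_j$ for $j\in\mathcal{I}\cup\{0\}$ and $b_j$ for $j\in\mathcal{J}$. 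Since $\tilde{y}\ne 0$ solves it, its rank is at most $p$; were it strictly less, Rouch\'e--Capelli would produce a second independent optimal dual solution $\hat{y}$, and a suitable combination $\tilde{c}\,\tilde{y}+\hat{c}\,\hat{y}$ would correspond to a polynomial meeting the lemma's hypotheses but with strictly more tight inequalities, contradicting the choice of $q$. Transferring the rank-$p$ conclusion to the primal column system via Rouch\'e--Capelli (its augmented matrix adjoins only $a_0$, which is already among the rows counted) yields the spanning statement.

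The main obstacle I anticipate is the strict-inequality step inside (iv): I must exclude the possibility that every inequality is tight. Here the implicit case genuinely differs from the explicit one, because the constraint $-q'(0)\ge 0$ is always tight, so the needed strict inequality must come from some $j\in\{1,\dots,k\}$. If none did, every such $j$ would be a multiple root of $q$, whence $q$ would satisfy \eqref{dual:poly2:exp:a}--\eqref{dual:poly2:exp:d} with $r=0$, and Lemma~\ref{lem:dualcond:imp} would give $\Cimp(k,p)\le 0$, contradicting $\Cimp(k,p)>0$.
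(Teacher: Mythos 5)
Your proposal is correct and takes essentially the same route as the paper: the paper proves this lemma by declaring it ``essentially the same'' as the proof of Lemma~\ref{lem:dualoptimal:exp}, and your argument is exactly that proof transplanted to the implicit primal--dual pair, with the right identification of the $\beta_0$ column as $b_0=(0,-1,0,\ldots,0)$ and of the extra dual constraint \eqref{dual:lp:imp:d}. Your closing observation --- that $0\in\mathcal{J}$ is automatic by \eqref{dual:poly2:exp:d}, so the strict-inequality needed in statement (iv) must come from some $1\le j\le k$ and is supplied by Lemma~\ref{lem:dualcond:imp} with $r=0$ --- is precisely the one adaptation the paper leaves implicit, and you handle it correctly.
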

\begin{proof}
  The proof is essentially the same as that of Lemma~\ref{lem:dualoptimal:exp}.
\end{proof}

\section{Upper bounds on SSP coefficients}

In this section we derive upper bounds on SSP coefficients using Lemma~\ref{lem:dualcond:exp}
and Lemma~\ref{lem:dualcond:imp}.
Proposition~\ref{prop:upperbound:exp} is just the classical upper bound on the
SSP coefficient for explicit LM methods, found in \cite{Lenferink_1989}.

\begin{prop} \label{prop:upperbound:exp}
  Let $k, p$ be positive integers. Then the following inequality holds.
  \begin{equation*}
    \Cexp(k,p) \leq
      \begin{cases}
        \frac{k-p}{k-1} & \text{if $k \ge 2$ and $p \le k$} \\
        0 & \text{if $p > k$} \\
        1 & \text{if $k = p = 1$.}
      \end{cases}
  \end{equation*}
\end{prop}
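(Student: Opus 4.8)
The plan is to invoke Lemma~\ref{lem:dualcond:exp}, which reduces the task of bounding $\Cexp(k,p)$ from above to the purely polynomial problem of exhibiting, for a given candidate value $r$, a non-zero real polynomial $q$ of degree at most $p$ satisfying the system \eqref{dual:poly2:exp:a}--\eqref{dual:poly2:exp:c}: namely $q(j)\ge 0$ and $-q'(j)+r\,q(j)\ge 0$ for $1\le j\le k$, together with $q(0)=0$. Once such a $q$ is produced with $r$ equal to the claimed bound, the lemma immediately yields $\Cexp(k,p)\le r$. So the whole proof becomes a construction of explicit ``barrier'' polynomials tailored to each of the three cases.

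\textbf{The three cases.} First I would dispose of the easy cases. For $k=p=1$, I would exhibit the linear polynomial $q(x):=x-\tfrac12$ and check directly that it satisfies \eqref{dual:poly2:exp:c}-type conditions with $r=1$ (indeed the proof of Lemma~\ref{lem:dualcond:exp} already records that $x-\tfrac12$ works with $r=2$ for all $k$, so only a mild adjustment is needed to pin the bound at $1$). For the regime $p>k$, I would produce a polynomial witnessing $r=0$: the natural choice is $q(x):=x\prod_{j=1}^{k}(x-j)^2$, which has degree $2k+1$; whenever $p\ge 2k+1$ this degree is admissible and with $r=0$ the condition $-q'(j)\ge 0$ at each $j\in\{1,\dots,k\}$ holds because each such $j$ is a double root (so $q'(j)=0$), while $q(j)=0$ and $q(0)=0$ are immediate. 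If $k<p<2k+1$ I would instead take a product of fewer double factors topped off with a single linear factor so that the degree lands exactly at $p$ while keeping $r=0$ feasible; the sign pattern is controlled exactly as in Lemma~\ref{lem:dual:roots}. The main case is $2\le p\le k$, where the target bound $\tfrac{k-p}{k-1}$ is a genuine positive number. Here I expect the sharp construction to be a polynomial whose double roots sit at a well-chosen subset of $\{1,\dots,k\}$ and which is calibrated so that the logarithmic-derivative inequality \eqref{dual:roots:exp:b}, $\sum_m (j-\lambda_m)^{-1}\le r$, is tight. A clean candidate is $q(x):=x\prod_{j=1}^{p-1}(x-j)^2$ or a shifted/rescaled analogue whose non-zero double roots are placed to make the worst-case value of $\sum_m (j-\lambda_m)^{-1}$ over $1\le j\le k$ equal exactly to $\tfrac{k-p}{k-1}$.

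\textbf{Verification mechanics.} For whichever polynomial I settle on in the main case, the verification splits, via Lemma~\ref{lem:dual:roots}, into three checks at each integer $j$ with $1\le j\le k$: the sign condition \eqref{dual:roots:exp:a} for \eqref{dual:poly2:exp:a}, the logarithmic-derivative condition \eqref{dual:roots:exp:b} for \eqref{dual:poly2:exp:b} at the non-roots (with multiple roots handled automatically), and $q(0)=0$. Because $0$ is a simple root and the other roots are double integer roots chosen in $[1,k]$, the sign pattern $c\cdot(-1)^{s(j)}$ is constant and positive, so \eqref{dual:poly2:exp:a} holds throughout; the real work is bounding $\sum_m (j-\lambda_m)^{-1}$ uniformly in $j$.

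\textbf{Expected obstacle.} The hard part will be the main case $2\le p\le k$: getting the precise constant $\tfrac{k-p}{k-1}$ rather than a cruder bound. This requires choosing the root locations so that the maximum over $j\in\{1,\dots,k\}$ of the sum $\sum_m (j-\lambda_m)^{-1}$ equals exactly the claimed value, which amounts to identifying the extremal configuration of double roots and then evaluating a telescoping or partial-fraction sum. I would expect the extremal $q$ to have its $p-1$ double roots (plus the simple root at $0$) arranged to leave the ``gap'' structure that forces the worst integer $j$ to be an endpoint, most plausibly $j=1$, where the sum reduces to a short closed form matching $\tfrac{k-p}{k-1}$.
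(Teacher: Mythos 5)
Your overall strategy is exactly the paper's: invoke Lemma~\ref{lem:dualcond:exp} and exhibit, in each case, an explicit non-zero real polynomial of degree at most $p$ satisfying \eqref{dual:poly2:exp:a}--\eqref{dual:poly2:exp:c} with $r$ equal to the claimed bound. However, there is a genuine gap precisely where you flag the ``expected obstacle'': in the main case $2\le p\le k$ you never produce an admissible witness. Your candidate $q(x)=x\prod_{j=1}^{p-1}(x-j)^2$ has degree $2p-1$, which exceeds $p$ for every $p\ge2$, so Lemma~\ref{lem:dualcond:exp} simply does not apply to it; and no variant with $p-1$ double roots spread over $\{1,\dots,k\}$ can fit within degree $p$, since a simple root at $0$ plus $p-1$ double roots already costs degree $2p-1$ (within degree $p$ you can afford at most $\lfloor(p-1)/2\rfloor$ double roots besides $0$). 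Two smaller problems: your $k=p=1$ witness $q(x)=x-\tfrac12$ violates \eqref{dual:poly2:exp:c} (it has $q(0)=-\tfrac12\ne0$), and even as a witness for the Farkas system \eqref{dual:poly:exp} it only certifies $\Cexp(1,1)\le 2$, not $\le1$; and your treatment of the subcase $k<p<2k+1$ is only a sketch in which the placement of the double roots (which matters) is left unspecified.

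The missing idea, which is the entire content of the paper's proof, is to concentrate all the multiplicity at the right endpoint: take $q(x):=x(k-x)^{p-1}$, of degree exactly $p$, in all cases. Then $q(0)=0$ and $q(j)\ge0$ for $1\le j\le k$; for a non-root integer $1\le j\le k-1$ the logarithmic derivative is $\frac1j-\frac{p-1}{k-j}$, which is decreasing in $j$, so its maximum is attained at $j=1$ and equals exactly $\frac{k-p}{k-1}$, while $j=k$ is a root of multiplicity $p-1$ handled by Lemma~\ref{lem:dual:roots}. This single polynomial certifies $r=\max\bigl(\frac{k-p}{k-1},0\bigr)$ for $k\ge2$ (covering both $p\le k$ and $p>k$ at once), $r=0$ for $k=1$ and $p\ge2$, and $r=1$ for $k=p=1$ (where $q(x)=x$ and the binding condition is $-q'(1)+r\,q(1)=r-1\ge0$), so all three cases of the proposition follow from Lemma~\ref{lem:dualcond:exp} with no case analysis over root configurations.
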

\begin{proof}
  Let us consider $q(x) := x(k-x)^{p-1}$, a polynomial of degree $p$.
  If $k \ge 2$ then $q$ satisfies the system
  \eqref{dual:poly2:exp:a}--\eqref{dual:poly2:exp:c}
  with $r = \max\left(\frac{k-p}{k-1}, 0\right)$.
  If $k = 1$ and $p \ge 2$ then $q$ satisfies the same system with $r=0$.
  Finally, if $p = k = 1$ then $q$ satisfies the same system with $r=1$.
  The statement of the proposition follows from
  Lemma~\ref{lem:dualcond:exp}.
\end{proof}

Theorem \ref{thm:upperbound:imp} can be viewed as a refinement of the upper bound
$\Cimp(k,p) \leq 2 \text{ for all } k\geq 1 \text{ and } p\geq 2$ proved
in \cite{Lenferink_1991, Hundsdorfer_Ruuth_2005}. 

\begin{thm} \label{thm:upperbound:imp}
  Let $k \ge 1$ and $p \ge 2$ be arbitrary integers.
  Then the following inequality holds. 
  \begin{equation*}
    \Cimp(k,p) \leq
      \begin{cases}
          \frac{2k-p}{k-1} & \text{if $k \ge 2$ and $p \le 2k$} \\
        0 & \text{if $p > 2k$} \\
        2 & \text{if $k = 1$ and $p = 2$.}
      \end{cases}
  \end{equation*}
\end{thm}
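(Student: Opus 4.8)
The plan is to mirror the proof of Proposition~\ref{prop:upperbound:exp}: exhibit a single explicit ``dual certificate'' polynomial $q$ and apply Lemma~\ref{lem:dualcond:imp}, which reduces the bound $\Cimp(k,p)\le r$ to the existence of a non-zero real polynomial $q$ of degree at most $p$ satisfying \eqref{dual:poly2:exp:a}--\eqref{dual:poly2:exp:d}. The only structural difference from the explicit case is the extra condition \eqref{dual:poly2:exp:d}, i.e. $q'(0)=0$, which together with \eqref{dual:poly2:exp:c} forces $0$ to be a root of multiplicity at least two. Accordingly I would replace the explicit certificate $x(k-x)^{p-1}$ by
\[
  q(x) := x^2\,(k-x)^{p-2},
\]
a polynomial of degree exactly $p$ (recall $p\ge 2$). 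Conditions \eqref{dual:poly2:exp:c} and \eqref{dual:poly2:exp:d} are immediate from the double root at $0$, and \eqref{dual:poly2:exp:a} holds because $q(j)=j^2(k-j)^{p-2}\ge 0$ for $1\le j\le k$.

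The heart of the argument is verifying \eqref{dual:poly2:exp:b}, for which I would invoke statement \eqref{roots:s2} of Lemma~\ref{lem:dual:roots}. The roots of $q$ are $0$ with multiplicity $2$ and $k$ with multiplicity $p-2$. For every integer $1\le j\le k-1$, which is not a root, the condition reduces to the logarithmic-derivative inequality \eqref{dual:roots:exp:b}:
\[
  \sum_{m=1}^{p}\frac{1}{j-\lambda_m}=\frac{2}{j}-\frac{p-2}{k-j}\le r.
\]
Both terms on the left are decreasing in $j$ on $[1,k-1]$, so the maximum is attained at $j=1$ and equals $\frac{2}{1}-\frac{p-2}{k-1}=\frac{2k-p}{k-1}$. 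Taking $r:=\max\!\left(\frac{2k-p}{k-1},\,0\right)$ therefore secures \eqref{dual:poly2:exp:b} for all $1\le j\le k-1$. The remaining value $j=k$ is a root whenever $p>2$: if $p\ge 4$ it is a multiple root and the first bullet of statement \eqref{roots:s2} applies automatically, while if $p=3$ it is a simple root and I would check the sign condition \eqref{dual:roots:exp:a} directly (the leading coefficient is $-1$ and $s(k)=1$, so $c\cdot(-1)^{s(k)}=1>0$). When $p=2$ there is no root at $k$, and the sum inequality already covers $j=k$ since $2/k\le 2$. With $r$ chosen as above, Lemma~\ref{lem:dualcond:imp} yields $\Cimp(k,p)\le\frac{2k-p}{k-1}$ when $k\ge 2$ and $p\le 2k$, and $\Cimp(k,p)\le 0$ when $k\ge 2$ and $p>2k$ (there $\frac{2k-p}{k-1}<0$, so one uses $r=0$, and $k$ is necessarily a multiple root).

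Finally I would dispatch the degenerate case $k=1$ separately, since the formula $\frac{2k-p}{k-1}$ is undefined there. For $k=1,\,p=2$ the certificate collapses to $q(x)=x^2$, for which \eqref{dual:poly2:exp:a}--\eqref{dual:poly2:exp:d} hold with $r=2$ (the only constraint is at $j=1$, where $-q'(1)+2q(1)=-2+2=0$), giving $\Cimp(1,2)\le 2$; for $k=1,\,p\ge 3$ one has $p>2k$, and $q(x)=x^2(1-x)^{p-2}$ works with $r=0$, matching the stated bound $0$. The main obstacle is not any single computation but the endpoint bookkeeping: one must split on whether $j=k$ is a simple root ($p=3$), a multiple root ($p\ge 4$), or not a root ($p=2$) so that the correct bullet of statement \eqref{roots:s2} of Lemma~\ref{lem:dual:roots} applies, and verify the sign condition \eqref{dual:roots:exp:a} in the simple-root case.
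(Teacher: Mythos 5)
Your proposal is correct and follows essentially the same route as the paper: the paper's proof uses the very same dual certificate $q(x) = x^2(k-x)^{p-2}$ together with Lemma~\ref{lem:dualcond:imp}, treating the cases $k\ge 2$, $k=1$ with $p\ge 3$, and $k=p-1=1$ exactly as you do. The only difference is that the paper states the verification as ``we can check,'' whereas you carry out that check explicitly via statement \eqref{roots:s2} of Lemma~\ref{lem:dual:roots}, including the correct endpoint analysis at $j=k$.
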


\begin{proof}
  Let us consider $q(x) := x^2(k-x)^{p-2}$, a polynomial of degree $p$.
  If $k \ge 2$ then $q$ satisfies the system
  \eqref{dual:poly2:exp:a}--\eqref{dual:poly2:exp:d} with $r = \max\left(\frac{2k-p}{k-1}, 0\right)$.
  If $k = 1$ and $p \ge 3$ then $q$ satisfies the same system with $r=0$.
  Finally, if $k = 1$ and $p = 2$ then $q$ satisfies the same system with $r=2$.
  The statement of the theorem follows from Lemma~\ref{lem:dualcond:imp}.
\end{proof}

\begin{prop} \label{prop:optimal}
  The following statements hold.
  \begin{enumerate}[(i)]
    \item \label{prop:optimal:s1}
      Suppose that positive integers $p$ and $k$ are such that $\Cexp(k,p) > 0$.
      Then there exists an explicit LM method of order $p$ with $k$ steps, and with $\C = \Cexp(k, p)$.
    \item \label{prop:optimal:s2}
      Suppose that integers $p \ge 2$ and $k \ge 1$ are such that $\Cimp(k,p) > 0$.
      Then there exists an implicit LM method of order $p$ with $k$ steps, and with $\C = \Cimp(k, p)$.
  \end{enumerate}
\end{prop}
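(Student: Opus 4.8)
The plan is to prove both statements by showing that the suprema defining $\Cexp(k,p)$ and $\Cimp(k,p)$ are attained, via a direct compactness argument on the coefficients of the methods rather than on the dual polynomials. I treat statement \eqref{prop:optimal:s1} first; statement \eqref{prop:optimal:s2} will go through identically. Since $\Cexp(k,p) > 0$, I would begin by choosing a maximizing sequence of $k$-step explicit methods of order $p$, with coefficient vectors $(\alpha_j^{(n)})_{j=1}^k$, $(\beta_j^{(n)})_{j=1}^k$ and SSP coefficients $\C_n$, such that $\C_n \to \Cexp(k,p)$ and $\C_n > 0$ for all $n$; this is possible because the supremum is positive and, by the proof of Lemma~\ref{lem:dualcond:exp}, finite. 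For each such method the order conditions \eqref{primal:oc1}--\eqref{primal:oc2} hold with $\beta_0^{(n)} = 0$, and the sign conditions in the definition of $\C$ give $\alpha_j^{(n)} \ge 0$, $\beta_j^{(n)} \ge 0$, and $\alpha_j^{(n)} - \C_n\,\beta_j^{(n)} \ge 0$ for all $1 \le j \le k$.

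The next step is to bound the coefficients uniformly. From \eqref{primal:oc1} together with $\alpha_j^{(n)} \ge 0$ I get $\alpha_j^{(n)} \in [0,1]$. Because $\C_n \to \Cexp(k,p) > 0$, we have $\C_n \ge \Cexp(k,p)/2$ for all large $n$, so $\beta_j^{(n)} \le \alpha_j^{(n)}/\C_n \le 2/\Cexp(k,p)$. Thus, after discarding finitely many terms, all coefficient vectors lie in the compact set $[0,1]^k \times [0, 2/\Cexp(k,p)]^k \subset \Real^{2k}$, and the Bolzano--Weierstrass theorem furnishes a subsequence along which $\alpha_j^{(n)} \to \alpha_j^\ast$ and $\beta_j^{(n)} \to \beta_j^\ast$ for every $j$.

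It then remains to identify the limit. The order conditions are linear equalities and hence are preserved in the limit, so $(\alpha_j^\ast),(\beta_j^\ast)$ (with $\beta_0^\ast = 0$) define a $k$-step explicit method of order $p$. The inequalities $\alpha_j^\ast \ge 0$ and $\beta_j^\ast \ge 0$ pass to the limit, and since $\alpha_j^{(n)} - \C_n\beta_j^{(n)} \ge 0$ with $\C_n \to \Cexp(k,p)$, I obtain $\alpha_j^\ast - \Cexp(k,p)\beta_j^\ast \ge 0$ for all $j$. Moreover \eqref{primal:oc2} with $m=1$ reads $\sum_{j} j\alpha_j^\ast = \sum_j \beta_j^\ast$, and $\sum_j \alpha_j^\ast = 1$ with $\alpha_j^\ast \ge 0$ forces $\sum_j j\alpha_j^\ast \ge 1$, so at least one $\beta_j^\ast$ is positive. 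Hence the limit method falls in the first case of the definition of $\C$, and its SSP coefficient equals $\min_{j:\beta_j^\ast \ne 0} \alpha_j^\ast/\beta_j^\ast \ge \Cexp(k,p)$; as this is a genuine $k$-step order-$p$ method, the reverse inequality holds by definition of the supremum, so its SSP coefficient is exactly $\Cexp(k,p)$, proving \eqref{prop:optimal:s1}.

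For the implicit case the only new point is to bound $\beta_0^{(n)}$: taking $m=1$ in \eqref{primal:oc2} gives $\beta_0^{(n)} = \sum_j j\alpha_j^{(n)} - \sum_j \beta_j^{(n)} \le k$, and $\beta_0^{(n)} \ge 0$, so $\beta_0^{(n)} \in [0,k]$ is likewise bounded; the rest of the argument is unchanged. The one delicate point throughout, and the place where the hypothesis $\Cexp(k,p)>0$ (respectively $\Cimp(k,p)>0$) is genuinely needed, is the uniform bound on the $\beta_j$: without positivity of the limiting SSP coefficient the ratios $\alpha_j/\beta_j$ give no control on $\beta_j$, and a maximizing sequence could escape to infinity. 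I would also note that the attaining SSP coefficient cannot strictly exceed $\Cexp(k,p)$, which is immediate from the definition of the supremum; equivalently, the construction exhibits a feasible point of \eqref{lp:primal:exp} at $r=\Cexp(k,p)$, so that no strict dual certificate, i.e.\ a polynomial satisfying \eqref{dual:poly:exp:a}--\eqref{dual:poly:exp:c} at that value of $r$, can exist.
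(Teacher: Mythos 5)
Your proof is correct, and it takes a genuinely different route from the paper's. The paper stays entirely inside its duality framework: if no method attained $r_0 = \Cexp(k,p)$, then the feasibility problem \eqref{lp:primal:exp} would be infeasible at $r=r_0$, so Lemma~\ref{lem:farkas:exp} would produce a dual polynomial $q$; the shifted polynomial $q - q(0)/2$ is then a certificate with slack, which remains valid at some level strictly below $r_0$, making the LP infeasible below the supremum --- a contradiction (the implicit case runs identically via Lemma~\ref{lem:farkas:imp} and Theorem~\ref{thm:upperbound:imp}). You instead work on the primal side: a maximizing sequence of methods, uniform coefficient bounds, Bolzano--Weierstrass, and passage to the limit in the (linear) order conditions and sign constraints. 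Your crux --- that positivity of the limiting SSP coefficient is exactly what bounds the $\beta_j$ via $\beta_j^{(n)} \le \alpha_j^{(n)}/\C_n$ --- is the right one, and the $m=1$ order condition correctly rules out the degenerate limit with all $\beta_j^\ast=0$ in the explicit case. As for what each approach buys: the paper's proof is a two-line corollary of machinery already built and stays uniform with the rest of the paper, while yours is elementary and self-contained (no Farkas lemma needed, only finiteness of the supremum, which you import correctly) and exhibits the optimal method concretely as a limit of near-optimal ones.

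One loose end in your implicit case: the claim that ``the rest of the argument is unchanged'' glosses over the step producing a nonzero $\beta_j^\ast$ with $1\le j\le k$. For implicit methods the $m=1$ condition reads $\sum_j j\alpha_j^\ast = \beta_0^\ast + \sum_{j\ge 1}\beta_j^\ast$, so the mass could a priori sit entirely in $\beta_0^\ast$, in which case the minimum defining $\C$ runs over an empty index set. Two one-line patches: either invoke the $m=2$ condition, which gives $\sum_j j^2\alpha_j^\ast = 2\sum_{j\ge 1} j\beta_j^\ast$ while $\sum_j j^2\alpha_j^\ast \ge \sum_j \alpha_j^\ast = 1$, forcing some $\beta_j^\ast>0$ with $j\ge 1$; or observe that an empty minimum would mean the limit method is unconditionally SSP (as for backward Euler), contradicting the bound $\Cimp(k,p)\le 2$ of Theorem~\ref{thm:upperbound:imp}. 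With either patch the implicit case closes just as cleanly as the explicit one.
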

\begin{proof}
  The proof of statement \eqref{prop:optimal:s1} is based on Lemma~\ref{lem:farkas:exp} and Proposition~\ref{prop:upperbound:exp},
  and on the fact that if a polynomial $q$ solves the system \eqref{dual:poly:exp} with $r = r_0 > 0$,
  then the polynomial $\tilde{q} := q - q(0) / 2$ solves the same system with $r = r_1$ for some $0 < r_0 < r_1$.
  Similarly, the proof of statement \eqref{prop:optimal:s2} is based on Lemma~\ref{lem:farkas:imp} and Theorem~\ref{thm:upperbound:imp}.
\end{proof}

Theorem~\ref{thm:impexp} presents a relation between the optimal SSP coefficients for explicit and implicit LM methods
of different orders.
This inequality, together with the existence of arbitrary order implicit LM methods with $\C > 0$
proved in \cite{Sand_1986}, implies the existence of arbitrary order LM methods with $\C > 0$.
Using this approach, we can show that $\Cexp(k,p) > 0$ for a given order $p$, if the number of steps is at least $k \ge 2^{p-1}+1$.
In contrast, Theorem~\ref{thm:exist:exp} proves the existence of explicit LM methods with $\C > 0$
for a much lower number of steps.

\begin{thm} \label{thm:impexp}
  Let $k, p$ be positive integers. Then the following inequality holds.
  \begin{equation} \label{impexp}
    \Cimp(k,2p) \leq 2\Cexp(k,p).
  \end{equation}
\end{thm}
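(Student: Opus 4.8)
The plan is to use the dual characterizations of the optimal SSP coefficients. By Lemma~\ref{lem:dualcond:imp} applied with order $2p$ and threshold $r = 2\Cexp(k,p)$, it suffices to exhibit a single non-zero real polynomial $Q$ of degree at most $2p$ satisfying the implicit dual system \eqref{dual:poly2:exp:a}--\eqref{dual:poly2:exp:d} with that value of $r$. The idea is to obtain $Q$ by squaring an optimal explicit dual polynomial, so that the order doubles from $p$ to $2p$ and the threshold doubles from $\Cexp(k,p)$ to $2\Cexp(k,p)$.

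First I would invoke Lemma~\ref{lem:dualcond:exp} with $r = \Cexp(k,p) \ge 0$: because $\Cexp(k,p) \le \Cexp(k,p)$ holds trivially, there exists a non-zero real polynomial $q$ of degree at most $p$ satisfying the explicit system \eqref{dual:poly2:exp:a}--\eqref{dual:poly2:exp:c} with $r = \Cexp(k,p)$, i.e.\ $q(j) \ge 0$ and $-q'(j) + \Cexp(k,p)\,q(j) \ge 0$ for $1 \le j \le k$, together with $q(0) = 0$. I would then set $Q(x) := q(x)^2$, which is a non-zero real polynomial of degree at most $2p$, and verify the four implicit conditions for $Q$ with $r = 2\Cexp(k,p)$. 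Three of them are immediate: $Q(j) = q(j)^2 \ge 0$ gives \eqref{dual:poly2:exp:a}; $Q(0) = q(0)^2 = 0$ gives \eqref{dual:poly2:exp:c}; and $Q'(0) = 2q(0)q'(0) = 0$ gives \eqref{dual:poly2:exp:d}, using $q(0) = 0$.

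The only computation worth isolating is condition \eqref{dual:poly2:exp:b}. Since $Q' = 2qq'$,
\begin{equation*}
  -Q'(j) + 2\Cexp(k,p)\,Q(j) = 2\,q(j)\bigl(-q'(j) + \Cexp(k,p)\,q(j)\bigr)
\end{equation*}
is a product of two non-negative factors for each $1 \le j \le k$, hence non-negative. Applying Lemma~\ref{lem:dualcond:imp} to $Q$ then yields $\Cimp(k,2p) \le 2\Cexp(k,p)$. There is no genuine obstacle here; the construction is explicit and the verification elementary. The single point requiring attention is precisely the factorization above, which is what both forces the factor $2$ on the threshold and shows why squaring the dual polynomial is the right move.
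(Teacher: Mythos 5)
Your proposal is correct and follows exactly the paper's own argument: both obtain a non-zero dual polynomial $q$ of degree at most $p$ for the explicit system via Lemma~\ref{lem:dualcond:exp} with $r=\Cexp(k,p)$, square it, verify the four conditions of \eqref{dual:poly2:exp:a}--\eqref{dual:poly2:exp:d} with $r=2\Cexp(k,p)$ using the same factorization $-Q'(j)+2\Cexp(k,p)Q(j)=2q(j)\bigl(-q'(j)+\Cexp(k,p)q(j)\bigr)$, and conclude by Lemma~\ref{lem:dualcond:imp}. No gaps; this matches the paper's proof step for step.
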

\begin{proof}
  Lemma \ref{lem:dualcond:exp} guarantees the existence of a non-zero real
  polynomial $q$ of degree at most $p$ that satisfies the system
  \eqref{dual:poly2:exp:a}--\eqref{dual:poly2:exp:c} with $r = \Cexp(k,p)$.
  Let us consider the polynomial $\tilde{q}:= q^2$.
  Clearly $\tilde{q}$ is a non-zero real polynomial of degree at most $2p$.
  We can check that $\tilde{q}$ satisfies conditions \eqref{dual:poly2:exp:a},
  \eqref{dual:poly2:exp:c} and \eqref{dual:poly2:exp:d}.

  The above $\tilde{q}$ satisfies condition \eqref{dual:poly2:exp:b}, too, with $r = 2\Cexp(k,p)$.
  This is because $-\tilde{q}'(j) + 2\Cexp(k,p)\tilde{q}(j) =
  2q(j)\left(-q'(j) + \Cexp(k,p)q(j)\right) \ge 0$ for all $1 \le j \le k$.
  Here we have used that $q$ satisfies conditions \eqref{dual:poly2:exp:a} and
  \eqref{dual:poly2:exp:b} with $r = \Cexp(k,p)$. 
  Hence $\tilde{q}$ satisfies the system \eqref{dual:poly2:exp:a}--\eqref{dual:poly2:exp:d}
  with $r = 2\Cexp(k,p)$, and thus Lemma~\ref{lem:dualcond:imp}
  implies the inequality \eqref{impexp}.
\end{proof}

\section{Existence of arbitrary order SSP explicit LMMs}
In \cite[Theorem 2.3(ii)]{Lenferink_1989}, it is asserted that there
exist explicit contractive linear multistep methods of arbitrarily high order;
the justification cited is \cite[Theorem~2.3]{Sand_1986}.  The latter Theorem
does prove existence of arbitrary-order contractive linear multistep methods;
however, it uses an assumption that if $\alpha_j\ne0$ for some $j$, then also
$\beta_j\ne 0$, which cannot hold for explicit methods (since necessarily 
$\alpha_0\ne 0$ and $\beta_0=0$).  Hence the methods constructed there are
necessarily implicit.

In view of Lemma \ref{lem:dualcond:exp}, the existence of arbitrary-order SSP explicit LMMs
can be shown by proving the infeasibility of \eqref{dual:poly2:exp:a}, \eqref{dual:poly2:exp:b} and
\eqref{dual:poly2:exp:c} with $r=0$ and a large enough $k$ for all $p$.
This can be achieved by applying Markov brothers-type inequalities to the polynomial $q$.
These inequalities give bounds on the maximum of the derivatives of a polynomial over an interval
in terms of the maximum of the polynomial, and
they are widely applied in approximation theory.

\begin{prop}[Markov brothers' inequality \cite{markovaa,markovva}] \label{prop:markov}
  Let $P$ be a real univariate polynomial of degree $n$. Then 
  \begin{equation*}
    \max_{-1 \le x \le 1} \abs{P^{(\ell)}(x)} \le
    \frac{n^2 (n^2 - 1^2) (n^2 - 2^2) \cdots (n^2 - (\ell-1)^2)}{1 \cdot 3 \cdot 5 \cdots (2\ell-1)} \max_{-1 \leq x \leq 1} \abs{P(x)}
  \end{equation*}
  for all $\ell$ positive integers. 
\end{prop}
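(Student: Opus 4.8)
The plan is to prove the sharp inequality by identifying the Chebyshev polynomial $T_n(x) := \cos(n\arccos x)$ as the extremal polynomial and then computing the extremal constant explicitly. By homogeneity we may normalize so that $\max_{-1\le x\le 1}\abs{P(x)}\le 1$; the goal then reduces to showing $\max_{-1\le x\le 1}\abs{P^{(\ell)}(x)}\le T_n^{(\ell)}(1)$, together with the verification that $T_n^{(\ell)}(1)$ equals the stated product. Since $T_n$ itself attains equality, this simultaneously confirms that the constant is sharp and that we have the correct target value.

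First I would pin down the constant, which is the routine part. The Chebyshev polynomial solves the differential equation $(1-x^2)T_n'' - xT_n' + n^2 T_n = 0$ with $T_n(1)=1$. Differentiating this identity $k$ times by the Leibniz rule and evaluating at $x=1$ (where the factor $1-x^2$ annihilates the top-order term, so only finitely many contributions survive) yields the one-term recursion $(2k+1)\,T_n^{(k+1)}(1) = (n^2-k^2)\,T_n^{(k)}(1)$. Iterating from $T_n^{(0)}(1)=1$ gives $T_n^{(\ell)}(1)=\prod_{k=0}^{\ell-1}\frac{n^2-k^2}{2k+1}$, which is exactly the factor appearing in the statement.

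The substantive step is the extremal characterization: among all $P$ of degree at most $n$ with $\max_{-1\le x\le1}\abs{P}\le 1$, the quantity $\max_{-1\le x\le 1}\abs{P^{(\ell)}}$ is largest for $P=\pm T_n$, and this maximum is attained at an endpoint. I would split this into two parts. In part (a) one shows $\max_{-1\le x\le 1}\abs{T_n^{(\ell)}}$ is attained at $x=\pm1$, using that $T_n^{(\ell)}$ is a constant multiple of a Gegenbauer (ultraspherical) polynomial with positive parameter $\ell$, whose maximum modulus on $[-1,1]$ sits at the endpoints. In part (b) one fixes $x_0$ and studies the linear functional $P\mapsto P^{(\ell)}(x_0)$ on the convex set $\{P:\deg P\le n,\ \max_{-1\le x\le1}\abs{P}\le 1\}$; its maximum is attained at an extreme point, which must meet the constraint $\abs{P}=1$ at many points of $[-1,1]$. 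Invoking the equioscillation of $T_n$ at the nodes $\cos(k\pi/n)$ and counting zeros of $\pm T_n - P$ together with zeros of its successive derivatives (via Rolle's theorem, carefully accounting for multiplicities at $x=\pm1$) forces the extremizer to coincide with $\pm T_n$.

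The hard part is precisely this zero-counting argument in part (b): one must track the multiplicities of the zeros of the difference and of its iterated derivatives near the endpoints, and this bookkeeping is exactly the content of V.\,A.~Markov's classical theorem, whose complete rigorous proof is notoriously lengthy. For this reason the pragmatic route I would adopt in the paper is to invoke the original results \cite{markovaa,markovva} directly, having independently verified above both the value of the extremal constant and the fact that $T_n$ realizes it.
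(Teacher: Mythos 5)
The paper offers no proof of this proposition at all: it is quoted as the classical Markov brothers' inequality and justified solely by the citations to the original papers, and your proposal --- after correctly deriving the constant $T_n^{(\ell)}(1)=\prod_{k=0}^{\ell-1}\frac{n^2-k^2}{2k+1}$ from the Chebyshev differential equation $(1-x^2)T_n''-xT_n'+n^2T_n=0$ and correctly identifying $\pm T_n$ as the extremal polynomial --- ultimately does the same thing, deferring the substantive zero-counting step (V.~A.~Markov's theorem) to those same references. So your route is essentially the paper's own (citation of the classical result); your verification of the constant and of sharpness is sound but is supplementary rather than a replacement for the omitted hard argument.
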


\begin{thm} \label{thm:exist:exp}
  Let positive integers $k$ and $p$ be such that
  \begin{equation} \label{step:sufficient}
    k > \sqrt{\frac{p^2(p^2-1)}6\cdot\floorM{\frac{p+2}2}}.
  \end{equation}
  Then $\Cexp(k, p) > 0$, that is, there exists an explicit LM method with $k$ steps, order of accuracy $p$ and $\C > 0$.
\end{thm}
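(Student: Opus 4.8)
The plan is to use Lemma~\ref{lem:dualcond:exp}: since $r=\Cexp(k,p)$ corresponds to the existence of a non-zero real polynomial satisfying \eqref{dual:poly2:exp:a}--\eqref{dual:poly2:exp:c}, we have $\Cexp(k,p)=0$ precisely when such a polynomial exists with $r=0$, and $\Cexp(k,p)>0$ when no such polynomial exists. So to prove $\Cexp(k,p)>0$ it suffices to show that, under hypothesis~\eqref{step:sufficient}, there is \emph{no} non-zero real polynomial $q$ of degree at most $p$ satisfying
\begin{subequations}
\begin{align*}
  q(j) &\ge 0 && \text{for } 1 \le j \le k,\\
  -q'(j) &\ge 0 && \text{for } 1 \le j \le k,\\
  q(0) &= 0,
\end{align*}
\end{subequations}
which is exactly system \eqref{dual:poly2:exp:a}--\eqref{dual:poly2:exp:c} specialized to $r=0$. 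I would argue by contradiction: assume such a $q$ exists and derive a violation of the Markov brothers' inequality (Proposition~\ref{prop:markov}).

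First I would extract the structural consequences of the constraints. By Lemma~\ref{lem:dualpropx:exp} applied with $r=0$, I may assume $q$ has degree exactly $p$, is non-negative on $[0,k]$, has $0$ as a simple root, all real roots are integers in $\{0,1,\dots,k\}$, interior integer roots $1\le j\le k-1$ have multiplicity $2$, and the behavior at $k$ is controlled by the parity of $p$. The conditions $q(j)\ge 0$ together with $-q'(j)\ge 0$ at the integer nodes force $q$ to be non-negative and non-increasing at each of the $k$ grid points, yet $q(0)=0$ with $q$ non-negative on $[0,k]$ means $q$ must rise away from $0$ and then be pushed back down; the tension is that $q$ cannot be both small (forced by $q(0)=0$ and monotonicity at nodes) and have a derivative large enough to accommodate a degree-$p$ polynomial over the long interval $[0,k]$.

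The key step is to quantify this tension via Markov's inequality. After rescaling the interval $[0,k]$ (or $[0,k-1]$, matching the $\frac{k-1}{2}$ half-width appearing in the region $\mathcal H$) to $[-1,1]$, I would normalize so that $\max_{[0,k]}|q|=1$ and then compare $\max|q'|$ or a higher derivative against the Markov bound $\frac{n^2(n^2-1)\cdots}{1\cdot 3\cdots(2\ell-1)}$ with $n=p$. The non-negativity and monotonicity at the $k$ integer nodes, combined with $q(0)=0$, should force $q$ to oscillate or to have a derivative whose size at some point exceeds what a degree-$p$ polynomial can sustain on an interval of length $\sim k$ once $k$ is as large as the right-hand side of~\eqref{step:sufficient}. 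The factor $\frac{p^2(p^2-1)}{6}\floor{\frac{p+2}{2}}$ strongly suggests taking $\ell=2$ in Proposition~\ref{prop:markov} (which gives the numerator $p^2(p^2-1)/3$) and pairing it with a counting argument over roughly $\floor{(p+2)/2}$ subintervals or sign changes—so I would set up the contradiction as: $k$ nodes with the prescribed sign/monotonicity pattern force at least a certain number of critical points, but a degree-$p$ polynomial with bounded second derivative (via Markov) cannot realize that many, once $k^2$ exceeds the stated product.

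The main obstacle I anticipate is getting the constant in~\eqref{step:sufficient} sharp: matching the precise combinatorial count (the $\floor{(p+2)/2}$ factor, presumably the number of distinct interior double roots or sign-alternation segments dictated by properties~\ref{prop5} and~\ref{prop4} of Lemma~\ref{lem:dualpropx:exp}) to the correct derivative order $\ell$ and rescaling constant in Markov's inequality. The qualitative infeasibility for $k$ large is clear, but pinning down exactly why $k>\sqrt{\tfrac{p^2(p^2-1)}{6}\floor{\tfrac{p+2}{2}}}$ is the threshold—rather than some other power of $p$—will require carefully tracking how the double roots consume degree and how the rescaling from $[0,k]$ to $[-1,1]$ transforms the derivative bound. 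I would organize the final argument so that assuming $q$ exists yields both a lower bound on some $\max|q^{(\ell)}|$ (from the forced root/critical-point structure) and the Markov upper bound, and check that~\eqref{step:sufficient} is exactly the inequality making these incompatible.
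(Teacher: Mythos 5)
Your plan follows exactly the paper's route: reduce via Lemma~\ref{lem:dualcond:exp} to the infeasibility of \eqref{dual:poly2:exp:a}--\eqref{dual:poly2:exp:c} with $r=0$, normalize $q$ using Lemma~\ref{lem:dualpropx:exp} (only properties \ref{prop1} and \ref{prop6} are actually needed), rescale $[0,k]$ to $[-1,1]$, and contradict the Markov brothers' inequality with $\ell=2$. However, the proposal stops precisely where the work lies: neither of the two quantitative inequalities that produce the contradiction is derived, and your one concrete guess about the mechanism is off. The paper sets $a:=\max_{[0,k]}q$ and $b:=\max_{[0,k]}\abs{q''}$ and proves
\begin{equation*}
  b\cdot\frac{k^2}{4}\;\le\; a\cdot\frac{p^2(p^2-1)}{6},
  \qquad\qquad
  a\;\le\;\frac{b}{4}\floorM{\frac{p+2}{2}},
\end{equation*}
whose combination gives $k^2\le\frac{p^2(p^2-1)}{6}\floorM{\frac{p+2}{2}}$, contradicting \eqref{step:sufficient}. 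For the first inequality, Markov is applied not to a mere rescaling of $q$ but to the centered polynomial $P(x):=q\bigl((x+1)\tfrac k2\bigr)-\tfrac a2$: since $0\le q\le a$ on $[0,k]$, the shift halves the sup-norm to $a/2$, and this is exactly where the $\tfrac16$ in \eqref{step:sufficient} comes from. You correctly noticed that Markov with $\ell=2$ only gives the constant $p^2(p^2-1)/3$; normalizing $\max\abs{q}=1$ as you propose does not close that factor of $2$, so your version would prove the theorem only with a threshold larger by $\sqrt{2}$.

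The second inequality is where $\floorM{\frac{p+2}{2}}$ enters, and it is not, as you conjecture, a count of interior double roots via properties \ref{prop5} and \ref{prop4} of Lemma~\ref{lem:dualpropx:exp}; the root structure plays no role. Instead, condition \eqref{dual:poly2:exp:b} with $r=0$ gives $q'(j)\le 0$ at every integer $1\le j\le k$, and together with $\abs{q''}\le b$ this yields $q'(x)\le b(1-x)$ on $[0,1]$ and $q'(x)\le b\min(x-j,\,j+1-x)$ on each interior unit interval $[j,j+1]$, so $\int_0^1 q'\le b/2$ and $\int_j^{j+1}q'\le b/4$. Since $q'$ has degree $p-1$, and each interior unit interval on which $q'$ becomes positive forces two sign changes of $q'$ (one in $[0,1]$), $q'$ can be positive on at most $\floorM{p/2}$ of the unit subintervals. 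Finally, $q(0)=0$ and $q\ge 0$ give $a\le\int_0^k\max(q',0)\diff x\le \frac b2+\frac b4\floorM{\frac{p-2}{2}}=\frac b4\floorM{\frac{p+2}{2}}$. Without the centering trick and this derivative-sign/degree counting argument, your outline establishes only the qualitative fact that $\Cexp(k,p)>0$ for all sufficiently large $k$, not the explicit bound \eqref{step:sufficient}; supplying them is the substance of the proof.
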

\begin{proof}
  Suppose to the contrary that $\Cexp(k, p) = 0$ for some $p$ and $k$ satisfying \eqref{step:sufficient}.
  Then by Lemma~\ref{lem:dualcond:exp}, there exists a non-zero real polynomial $q$ of degree at most $p$
  that satisfies the system \eqref{dual:poly2:exp:a}--\eqref{dual:poly2:exp:c} with $r = 0$.
  We choose this $q$ to satisfy properties \ref{prop1} and \ref{prop6} of Lemma \ref{lem:dualpropx:exp}.
  That is, $q$ is of degree $p$, and $q(x) \ge 0$ for all $x\in [0,k]$.

  First we define $a := \max_{x \in[0,k]} q(x)$ and $b := \max_{x\in[0,k]} |q''(x)|$,
  and we introduce the polynomial $P(x) := q((x+1)\frac{k}2) - \frac{a}{2}$.
  By construction, $\max_{x \in[-1,1]} |P(x)| = \frac12a$ and
  $\max_{x \in[-1,1]} |P''(x)| = \frac{k^2}{4}b$,
  and thus the Markov brothers' inequality for the second derivative
  (Proposition \ref{prop:markov} with $\ell = 2$) applied to $P$ implies that
  \begin{equation} \label{ba}
    b\cdot \frac{k^2}{4} \leq a\cdot\frac{p^2(p^2-1)}{6}.
  \end{equation}
  
  Now we give an upper estimate for $a$ in terms of $b$.
  By the definition of $b$, and since $q'(j) \ge 0$ for all integers $1 \le j \le k$
  due to condition \eqref{dual:poly2:exp:b},
  we have $q'(x) \le b(1-x)$ for $x \in [0,1]$, and 
  $q'(x) \le b\cdot \max(x-j, j+1-x)$ for $x \in [j, j+1]$ for all integers $1 \le j \le k-1$.
  From this, we get that $\int_0^1 q'(x) \diff x \le \frac{b}{2}$ and  
  $\int_j^{j+1} q'(x) \diff x \le \frac{b}{4}$ for all for all integers $1 \le j \le k-1$.

  Since $q'$ is a polynomial of degree $p-1$, it follows that 
  \begin{equation*}
    \left| \left\{j \in \mathbb{Z}: 0 \le j \le k-1, \text{ and } \max_{x\in[j,j+1]}q'(x) > 0\right\}\right| \le \left\lfloor{\frac{p}2} \right\rfloor,
  \end{equation*}
  that is, $q'$ can take positive values on at most $\left\lfloor\frac{p}{2}\right\rfloor$
  subintervals of $[0,k]$ of length one.
  Since $q(0) = 0$ and $q$ is non-negative on $[0, k]$, the interval $[0,1]$ is one of
  these subintervals.

  Now, because of $q(0) = 0$, we can estimate $a$ from above by
  \begin{equation} \label{ab}
    a \leq \int_0^k \max(q'(x), 0) \diff x \le \frac{b}{2} + \frac{b}{4}\cdot \left\lfloor\frac{p-2}{2}\right\rfloor
      = \frac{b}{4} \left\lfloor\frac{p+2}{2}\right\rfloor.
  \end{equation}

  Combining \eqref{ab} and \eqref{ba} yields that
  \begin{equation*}
    k \le \sqrt{\frac{p^2(p^2-1)}6\cdot\left\lfloor\frac{p+2}{2}\right\rfloor},
  \end{equation*}
  which contradicts the assumption of the theorem.
  Hence $\Cexp(k,p) > 0$ for all $k$ and $p$ positive integers satisfying
  \eqref{step:sufficient}.
\end{proof}

\section{Asymptotic behavior of the optimal step size coefficient for large number of steps}

Since for all positive $p$, $\Cexp(k,p)$ and $\Cimp(k,p)$ are trivially non-decreasing in $k$,
the following definitions are meaningful.
\begin{align*}
  \Cexp(\infty,p) & := \lim_{k\to \infty} \Cexp(k, p) &
  \Cimp(\infty,p) & := \lim_{k\to \infty} \Cimp(k, p)
\end{align*}
Due to the existence of arbitrary order explicit and implicit LM methods with $\C > 0$ (Theorem \ref{thm:exist:exp}),
and due to the upper bounds on SSP coefficients (Proposition~\ref{prop:upperbound:exp}
and Theorem~\ref{thm:upperbound:imp}), $0 < \Cimp(\infty,p) \le 1$ for all positive $p$,
and $0 < \Cimp(\infty,p) \le 2$ for all $p \ge 2$.

\begin{thm} \label{thm:asympt:exp}
  Let $p$ be a positive integer.
  Then the following statements hold.
  \begin{enumerate}[(i)]
    \item \label{asympt:exp:s1}
      If $p$ is odd then there exists a positive integer $K_p$ such that
      for all integers $k \ge K_p$
      \begin{equation} \label{asympt:odd:exp}
        \Cexp(k, p) = \Cexp(\infty, p).
      \end{equation}
    \item \label{asympt:exp:s2}
      If $p$ is even then there exists a positive integer $k_p$ such that
      for all integers $k > k_p$
      \begin{equation} \label{asympt:even:exp}
        \frac{1}{k-1} \le \Cexp(\infty, p-1) - \Cexp(k, p) \le \frac{1}{k-k_p}.
      \end{equation}
  \end{enumerate}
\end{thm}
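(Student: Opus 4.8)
The plan is to work entirely on the dual side, using Lemma~\ref{lem:dualcond:exp} to identify
\[
  \Cexp(k,p) \;=\; \min_{q}\ \max_{\substack{1\le j\le k\\ q(j)\neq 0}} \frac{q'(j)}{q(j)},
\]
where $q$ ranges over non-zero real polynomials of degree at most $p$ with $q(0)=0$ and $q(j)\ge 0$ for $1\le j\le k$; here I use that \eqref{dual:poly2:exp:b} reads $r\,q(j)\ge q'(j)$, so the least admissible $r$ for a given $q$ is the displayed maximum (constraints at double roots being vacuous). For fixed $q$ the inner maximum is non-decreasing in $k$, which re-proves monotonicity and hence convergence to $\Cexp(\infty,p)$. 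Throughout I exploit the detailed structure of the minimizers furnished by Lemma~\ref{lem:dualpropx:exp}, and the fact that $\Cexp(\infty,p)>0$ (from Theorem~\ref{thm:exist:exp}). I would prove the odd case \eqref{asympt:odd:exp} first, since the even case \eqref{asympt:even:exp} reduces to it.

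For the even case I first establish the lower bound $\Cexp(\infty,p-1)-\Cexp(k,p)\ge \frac1{k-1}$ by an explicit construction. Let $\tilde q$ be a degree-$(p-1)$ minimizer witnessing $\Cexp(k,p-1)=\Cexp(\infty,p-1)=:\rho$; such a $\tilde q$, valid on all of $\{1,\dots,k\}$, exists once $k\ge K_{p-1}$ by the already-proved odd case. Set $q(x):=(k-x)\tilde q(x)$, a polynomial of degree $p$ with $q(0)=0$ and $q(j)\ge 0$ for $1\le j\le k$. A one-line computation using $\tilde q'(j)\le \rho\,\tilde q(j)$ gives
\[
  -q'(j)+r\,q(j)\;\ge\; \tilde q(j)\bigl(1+(r-\rho)(k-j)\bigr),
\]
which is non-negative for all $1\le j\le k$ precisely when $r\ge \rho-\frac1{k-1}$ (the binding index being $j=1$). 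Hence $q$ solves \eqref{dual:poly2:exp:a}--\eqref{dual:poly2:exp:c} with $r=\rho-\frac1{k-1}$, and Lemma~\ref{lem:dualcond:exp} yields $\Cexp(k,p)\le \Cexp(\infty,p-1)-\frac1{k-1}$.

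The matching upper bound is the reverse comparison. Let $q$ be a minimizer for $\Cexp(k,p)$ with $p$ even; by Lemma~\ref{lem:dualpropx:exp} it has a \emph{simple} root at $k$ and is non-negative on $[0,k]$, so $\bar q(x):=q(x)/(k-x)$ is a non-zero polynomial of degree $p-1$ with $\bar q(0)=0$ and $\bar q(j)\ge 0$ for $1\le j<k$. Differentiating $q=(k-x)\bar q$ gives the identity
\[
  \frac{\bar q'(j)}{\bar q(j)}\;=\;\frac{q'(j)}{q(j)}+\frac1{k-j}\qquad(1\le j<k,\ q(j)\neq0).
\]
Restricting to $1\le j\le K_{p-1}$ and using $q'(j)/q(j)\le \Cexp(k,p)$ together with $\frac1{k-j}\le \frac1{k-K_{p-1}}$ shows that $\bar q$ solves the degree-$(p-1)$ dual system on $\{1,\dots,K_{p-1}\}$ with $r=\Cexp(k,p)+\frac1{k-K_{p-1}}$. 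By Lemma~\ref{lem:dualcond:exp} and the odd case, $\Cexp(\infty,p-1)=\Cexp(K_{p-1},p-1)\le \Cexp(k,p)+\frac1{k-K_{p-1}}$, so taking $k_p:=K_{p-1}$ closes \eqref{asympt:even:exp}.

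It remains to prove the odd case, where the content is that $\sup_k\Cexp(k,p)$ is attained at a finite $K_p$; since the sequence is non-decreasing, this is equivalent to \eqref{asympt:odd:exp}. The crux, which I expect to be the hardest step, is a uniform bound: for odd $p$ there is $B_p$, independent of $k$, with every minimizer $q_k$ of Lemma~\ref{lem:dualpropx:exp} having all roots in $[0,B_p]$. The sign-pattern analysis behind Lemma~\ref{lem:dualpropx:exp} forces a real root in $[1,k-1]$ to be double and, for a largest-real-part real root at $m\le k-1$, yields $q_k'(m+1)/q_k(m+1)\ge 2$ (all remaining roots contributing non-negatively there), contradicting $\Cexp(k,p)<1$ from Proposition~\ref{prop:upperbound:exp} (the case $p=1$ being immediate); the only escape, a double root exactly at $k$, wastes two degrees — it perturbs the relevant thresholds by $o(1)$ as $k\to\infty$ — and is therefore beaten by a bounded-root competitor. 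Controlling complex-conjugate pairs whose real part drifts toward $k$ (or whose modulus blows up, degenerating the degree) is the delicate part, and I would handle it with the quantitative estimates from the proof of Lemma~\ref{lem:aux:exp}. Granting $B_p$, I conclude as follows: fix $k_1$ so large that $q_{k_1}$ has roots in $[0,B_p]$ and $\frac{p}{k_1+1-B_p}\le \Cexp(k_1,p)$ (possible since $\Cexp(\cdot,p)\ge\Cexp(\infty,p)>0$). For every integer $j>k_1$ we then have $q_{k_1}(j)>0$ and $q_{k_1}'(j)/q_{k_1}(j)=\sum_m (j-\lambda_m)^{-1}\le \frac{p}{k_1+1-B_p}\le \Cexp(k_1,p)$, so $q_{k_1}$ stays admissible for every $k\ge k_1$ with the same threshold. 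Hence $\Cexp(k,p)\le \Cexp(k_1,p)$, and monotonicity forces equality for all $k\ge k_1$, giving \eqref{asympt:odd:exp} with $K_p:=k_1$.
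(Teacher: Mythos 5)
Your treatment of the even case is sound, and in fact slightly cleaner than the paper's: the lower bound via $q(x)=(k-x)\tilde q(x)$ is exactly the paper's Step 1, and your upper bound --- dividing out the simple root at $k$ guaranteed by Lemma~\ref{lem:dualpropx:exp} and restricting the constraints \eqref{dual:poly2:exp:b} to $1\le j\le K_{p-1}$, so that the identity $\bar q'(j)/\bar q(j)=q'(j)/q(j)+1/(k-j)$ does all the work --- avoids the paper's need to first localize the remaining $p-1$ roots. Your final step is also exactly the paper's Step 2: once a single optimal dual polynomial for one large $k_1$ has all roots in a $k$-independent set $[0,B_p]$, it remains feasible for every $k\ge k_1$ with the same $r$, and monotonicity freezes the sequence. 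The problem is that the entire theorem therefore hinges on the root-localization claim for odd $p$, and that claim is not proved; it is precisely where the paper expends most of its effort (the $L(p,n)$, $N(p)$ machinery and Steps 3--4).

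Concretely, three things in your sketch of the crux do not hold up. First, the parenthetical justification of $q_k'(m+1)/q_k(m+1)\ge 2$ (``all remaining roots contributing non-negatively there'') is false: a complex conjugate pair $\lambda,\conj{\lambda}$ with $\Re(\lambda)>m+1$ contributes $2\bigl(m+1-\Re(\lambda)\bigr)/\abs{m+1-\lambda}^2<0$, and with up to roughly $p/2$ such pairs of moderate imaginary part these contributions can cancel the $+2$ from the double root, so no contradiction with $\Cexp(k,p)<1$ follows for general $p$. (This is not a corner case: for odd $p$ and large $k$ the optimal dual polynomials carry complex pairs rather than interior double roots, so the complex configuration is the essential one, not the real one.) Second, the assertion that a double root at $k$ ``wastes two degrees'' and ``is therefore beaten by a bounded-root competitor'' is a claim, not an argument. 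Third, the estimates in the proof of Lemma~\ref{lem:aux:exp} only confine roots to $\mathcal{H}$, whose real parts range over $[0,k]$; they cannot by themselves produce a bound independent of $k$. The paper closes exactly this gap by a different mechanism: it defines $N(p)$, the maximal number of roots of optimal dual polynomials that can be kept in a $k$-independent window, and proves $N(p)=p$ for all odd $p$ by induction, playing off two facts against each other --- if roots escape, i.e.\ $N(p)=p_0<p$, then the escaping factors contribute $o(1)$ to the logarithmic derivative and $\Cexp(\infty,p)=\Cexp(\infty,p_0)$ (Step 4); whereas whenever $N(p_0)=p_0$, appending a suitable far-away quadratic factor shows the strict decrease $\Cexp(\infty,p_0)>\Cexp(\infty,p_0+2)$ (Step 3). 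These two conclusions are incompatible, so no roots escape along odd orders. Some argument of this inductive type (or a genuinely new localization proof) is needed before your $B_p$ exists; as written, the odd case --- and with it both parts of the theorem, since your even case relies on it --- remains unproved.
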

\begin{proof}
  For positive integers $p$ and $n \le p$, let $L(p, n)$ denote the smallest whole number $L$ such that
  for all $k$ large enough that $\Cexp(k,p) > 0$, there exists a polynomial $q(x) := c\cdot \prod_{m=0}^p (x-\lambda_m)$
  of degree $p$ that satisfies the system \eqref{dual:poly2:exp:a}--\eqref{dual:poly2:exp:c} with $r = \Cexp(k,p)$,
  and whose roots satisfy
  \begin{equation} \label{roots:bound}
    \vert \{m : 1 \le m \le p, \Re(\lambda_n) \in [0, L]\} \vert \ge n;
  \end{equation}
  if no such $L$ exist, then we define $L(p,n) := \infty$.
  Let $N(p)$ denote the largest $n \le p$ positive integer such that $L(n, p) < \infty$.
  Because of properties \ref{prop1}--\ref{prop3} and \ref{prop5} of Lemma~\ref{lem:dualpropx:exp},
  $N(p)$ is odd for all $p \ge 1$.
  We break the proof into a number of steps.

  \textbf{Step 1.}
  Let integers $k \ge 2$ and $p \ge 1$ be such that $\Cexp(k,p) > 0$, and let $q$ be a polynomial
  that satisfies the system \eqref{dual:poly2:exp:a}--\eqref{dual:poly2:exp:c} with $r = \Cexp(k,p)$.
  We can check that $\tilde{q}(x) := (k-x)\cdot q(x)$, a polynomial of degree $p+1$,
  satisfies the same system with $r = \max\left(0, r_0-\frac{1}{k-1}\right)$,
  and thus Lemma~\ref{lem:dualcond:exp} implies that
  $\Cexp(k, p+1) \le \max\left(0, \Cexp(k, p) - \frac{1}{k-1}\right)$. 
  Hence $\Cexp(k,p)$ is strictly decreasing in $p$ until it reaches $0$.

  \textbf{Step 2.}
  Suppose that $N(p) = p$ for some positive integer $p$.
  We choose the positive integer $K_p$ so that $\Cexp(K_p, p) > 0$
  and $K_p \ge L(p,p) + p/\Cexp(K_p, p)$.
  Such a $K_p$ exists, because $\Cexp(k,p)$ is non-decreasing in $k$.
  Let $q$ be a polynomial of degree $p$
  that satisfies the system \eqref{dual:poly2:exp:a}--\eqref{dual:poly2:exp:c}
  with $r = \Cexp(K_p, p)$ and $k = K_p$, and whose roots
  satisfy \eqref{roots:bound} with $L = L(p, p)$ and $n = p$.
  We can check that $q$ satisfies the inequality \eqref{dual:poly2:exp:a} for all $j > K_p$.
  In view of statement \eqref{roots:s2} of Lemma~\ref{lem:dual:roots} and because of
  \begin{equation*}
    \sum_{m = 1}^p \frac{1}{j-\lambda_m} \le \sum_{m=1}^p\frac{1}{j-\Re(\lambda_m)} \le
    p\cdot \frac{1}{j-L(p,p)} \le \Cexp(K_p, p),
  \end{equation*}
  $q$ satisfies the inequality \eqref{dual:poly2:exp:b} with $r=\Cexp(K_p,p)$
  for all $j > K_p$.

  Therefore $q$ satisfies the system \eqref{dual:poly2:exp:a}--\eqref{dual:poly2:exp:c}
  with $r = \Cexp(K_p,p)$ for all $k\ge K_p$, and thus Lemma~\ref{lem:dualcond:exp} implies that $\Cexp(k,p) \le \Cexp(K_p,p)$
  for all $k \ge K_p$. Since $\Cexp(k,p)$ is non-decreasing in $k$, this means that
  $\Cexp(k,p) = \Cexp(K_p, p)$ for all $k \ge K_p$.
  Hence for all positive $p$, $N(p) = p$ implies that condition
  \eqref{asympt:odd:exp} holds for an appropriate choice of $K_p$.
 
  \textbf{Step 3.}
  Suppose that $N(p)=p$ holds for some positive integer $p$, and suppose that $q$ is
  a polynomial of degree $p$ that satisfies the system \eqref{dual:poly2:exp:a}--\eqref{dual:poly2:exp:c}
  with $r = \Cexp(\infty,p)$ for all positive $k$, and whose roots satisfy \eqref{roots:bound} with $L = L(p, p)$ and $n = p$.
  Such a $q$ exists in view of Step 2.
  Let us define $\mu := p/\Cexp(\infty, p)$, and let us introduce
  $\tilde{q}(x) := (x^2 - 2\left(4\mu+L(p,p)\right)\cdot x +
  \left(4\mu+L(p,p)\right)^2 + 4\mu^2) \cdot q(x)$, a polynomial of degree $p+2$.

  We can check that $\tilde{q}$ satisfies the inequality \eqref{dual:poly2:exp:a}
  for all positive $j$, and that $\tilde{q}$ satisfies condition \eqref{dual:poly2:exp:c}.
  Using the inequality $1/(j-\lambda)+1/(j-\conj{\lambda}) \le 1/\Im(\lambda)$,
  we can show, in a similar way as in Step 2, that $\tilde{q}$ satisfies the inequality
  \eqref{dual:poly2:exp:b} with $r = \frac34\Cexp(\infty, p)$ for all $j \ge L(p,p) + 2\mu$.
  Using that $1/(j-\lambda)+1/(j-\conj{\lambda})$ is decreasing in $j$
  in the range $0 \le j \le \Re(\lambda) - \Im(\lambda)$, we can show that
  $\tilde{q}$ satisfies the inequality \eqref{dual:poly2:exp:b} with
  $r = \Cexp(\infty, p) - 2(L(p,p)+4\mu) / ((L(p,p)+4\mu)^2 + 4\mu^2)$ for all
  $1 \le j < L(p,p) + 2\mu$.
  Therefore $\tilde{q}$ satisfies the system \eqref{dual:poly2:exp:a}--\eqref{dual:poly2:exp:c}
  with the maximum of the above expressions for $r$ for all positive $k$, and
  thus Lemma \ref{lem:dualcond:exp} implies that
  \begin{equation*}
    \Cexp(\infty,p+2) \le \max\left(\frac34\Cexp(\infty,p), \Cexp(\infty,p) - \frac{2(L(p,p)+4\mu)}{(L(p,p)+4\mu)^2 + 4\mu^2}\right).
  \end{equation*}
  Hence for all positive $p$, $N(p) = p$ implies that $\Cexp(\infty, p) > \Cexp(\infty,p+2)$.

  \textbf{Step 4.}
  Suppose that $N(p) = N(p_0) = p_0$ for some $p > p_0$ positive integers,
  and let $\epsilon > 0$ be arbitrary.
  In view of Step~2, there exists a positive integer $K_{p_0}$ such that
  $\Cexp(k,p_0) = \Cexp(\infty,p_0)$ for all $k \ge K_{p_0}$.
  Let us define $L := \max(L(p,p_0), K_{p_0})$ and $K := L + (p-p_0)/\epsilon$.

  Since $N(p) = p_0$, we can choose $k > K$ so that there
  exists a $q(x) := c\cdot\prod_{m=1}^p (x-\lambda_m)$ polynomial that
  satisfies the system \eqref{dual:poly2:exp:a}--\eqref{dual:poly2:exp:c}
  with $r = \Cexp(k, p)$, and whose roots satisfy the conditions
  $\Re(\lambda_1), \ldots, \Re(\lambda_{p_0}) \in [0,L]$ and
  $\Re(\lambda_{p_0+1}), \ldots, \Re(\lambda_p) \ge K$.
  Let us introduce $\tilde{q} := c\cdot (-1)^{p-p_0}\cdot\prod_{m=1}^{p_0} \left(x-\lambda_m\right)$,
  a polynomial of degree $p_0$.
  We can check that $\tilde{q}$ satisfies the conditions of statement \eqref{roots:s1} of
  Lemma~\ref{lem:dual:roots} for all positive $j$, and that $\tilde{q}$ satisfies
  condition \eqref{dual:poly2:exp:c}.
  Moreover, $\tilde{q}$ satisfies the conditions of statement \eqref{roots:s2} of Lemma~\ref{lem:dual:roots}
  with $r = \Cexp(k,p) + \epsilon$ for all $1\le j \le L$, since
  \begin{equation*}
    \sum_{m=0}^p\frac1{j-\lambda_m} = \sum_{m=0}^{p_0}\frac1{j-\lambda_m} +
    \sum_{m=p_0+1}^{p}\frac1{j-\lambda_m} \ge \sum_{m=0}^{p_0}\frac1{j-\lambda_m} +
    (p-p_0)\frac1{L-K} = \sum_{m=0}^{p_0}\frac1{j-\lambda_m} - \epsilon
  \end{equation*}
  for all $1 \le j \le L$.
  Now Lemma \ref{lem:dual:roots} implies that $\tilde{q}$ satisfies the system
  \eqref{dual:poly2:exp:a}--\eqref{dual:poly2:exp:c} with $r = \Cexp(k,p) + \epsilon$.
  From this, Lemma~\ref{lem:dualcond:exp} implies that
  $\Cexp(k,p) + \epsilon \ge \Cexp(L,p_0) = \Cexp(K_{p_0},p_0) = \Cexp(\infty,p_0)$.
  Since $\epsilon > 0$ is arbitrary and $\Cexp(\infty,p)$ is non-increasing in $p$,
  this means that $\Cexp(\infty, p) = \Cexp(\infty, p_0)$.
  Hence $N(p) = N(p_0) = p_0$ implies that $\Cexp(\infty, p) = \Cexp(\infty, p_0)$.

  Furthermore if $p = p_0 + 1$ then $p$ is even, and thus property \ref{prop4} of Lemma~\ref{lem:dualpropx:exp}
  guarantees that $\lambda_p = k$. Then a similar reasoning yields that $N(p) = N(p-1) = p-1$
  implies $\Cexp\left(\infty,p-1\right) - \Cexp(k,p) \le \frac{1}{k-L}$ for all $k > L$.

  \textbf{The proof of statement \eqref{asympt:exp:s1} of the Theorem.}
  Clearly, $N(1) = 1$ holds, due to condition \eqref{dual:poly2:exp:c}.

  Assume now that $N(2j+1) = 2j+1$ holds for all integers $0 \le j \le m-1$.
  Since $1 \le N(p) \le p$ and $N(p)$ is odd, we have $N\left(2m+1\right) = N(2j_0 + 1)$ for some integer $0 \le j_0 \le m$.
  Suppose that $j_0 < m$. Then Step 4 implies that $\Cexp(2m+1) = \Cexp(2j_0+1)$,
  which contradicts the result of Step 3.
  Therefore $N(2m + 1) = 2m +1$.

  By induction we conclude that $N(p) = p$ for all odd positive integers $p$.
  Finally, Step 2 implies statement \eqref{asympt:exp:s1} of the Theorem.
  
  \textbf{The proof of statement \eqref{asympt:exp:s2} of the Theorem.}
  Let $p$ be an arbitrary even positive integer.
  Then $N(p) = p_0$ for some odd positive integer $1 \le p_0 < p$, since $1 \le N(p) \le p$
  and $N(p)$ is odd.
  We already know that $N(p_0) = p_0$,
  thus Step~4 implies that $\Cexp(\infty, p) = \Cexp(\infty, p_0)$.
  Suppose that $p_0 \ne p-1$. Then Step~3 implies that $\Cexp(\infty,p) = \Cexp(\infty, p_0) > \Cexp(\infty, p-1)$,
  which contradicts the fact that $\Cexp(\infty, p)$ is non-increasing in $p$.
  Therefore $p_0 = p-1$, and thus Step~4 implies that
  $\Cexp\left(\infty,p-1\right) - \Cexp(k,p) \le \frac{1}{k-k_p}$ for a fixed $k_p$ and for all $k > k_p$.

  For $k$ large enough, the inequality $\frac{1}{k-1} \le \Cexp\left(\infty,p-1\right) - \Cexp(k,p)$
  follows from Step 1 and the fact that $\Cexp(k, p-1)$ is non-decreasing in $k$.
\end{proof}

\begin{thm} \label{thm:asympt:imp}
  Let $p \ge 2$ be an arbitrary integer.
  Then the following statements hold.
  \begin{enumerate}[(i)]
    \item
      If $p$ is even then there exists a positive integer $K_p$ such that
      for all integers $k \ge K_p$
      \begin{equation} \label{asympt:odd:imp}
        \Cimp(k, p) = \Cimp(\infty, p).
      \end{equation}
    \item
      If $p$ is odd then there exists a positive integer $k_p$ such that
      for all integers $k > k_p$
      \begin{equation} \label{asympt:even:imp}
        \frac{1}{k-1} \le \Cimp(\infty, p-1) - \Cimp(k, p) \le \frac{1}{k-k_p}.
      \end{equation}
  \end{enumerate}
\end{thm}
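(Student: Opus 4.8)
The plan is to transcribe the proof of Theorem~\ref{thm:asympt:exp} almost verbatim, replacing the explicit dual system \eqref{dual:poly2:exp:a}--\eqref{dual:poly2:exp:c} by the implicit system \eqref{dual:poly2:exp:a}--\eqref{dual:poly2:exp:d}, the coefficient $\Cexp$ by $\Cimp$, and Lemma~\ref{lem:dualpropx:exp} by Lemma~\ref{lem:dualpropx:imp}. Concretely, for positive integers $n \le p$ I would let $L(p,n)$ be the smallest $L$ such that for all $k$ large enough that $\Cimp(k,p) > 0$ there is a degree-$p$ polynomial satisfying \eqref{dual:poly2:exp:a}--\eqref{dual:poly2:exp:d} with $r = \Cimp(k,p)$ and at least $n$ of whose roots have real part in the interval $[0,L]$, and let $N(p)$ be the largest such $n$ with $L(p,n) < \infty$.

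The one genuinely new ingredient is a parity fact: here $N(p)$ is \emph{even} for every $p \ge 2$, whereas in the explicit setting it was odd. This is immediate from Lemma~\ref{lem:dualpropx:imp}, which forces the root $0$ to have multiplicity exactly two, every interior integer root in $[1,k-1]$ to have multiplicity two, and the non-real roots to occur in conjugate pairs; the only root whose real part grows with $k$ is the one at $k$ (simple for odd $p$), so every root that stays within a fixed bounded region as $k \to \infty$ is counted an even number of times. This single change of parity is precisely what interchanges the roles of even and odd $p$ relative to Theorem~\ref{thm:asympt:exp}.

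With $L$ and $N$ so defined, I would reproduce Steps~1--4 of the proof of Theorem~\ref{thm:asympt:exp} up to these substitutions, the only extra bookkeeping being to verify that the additional condition \eqref{dual:poly2:exp:d}, i.e.\ $q'(0) = 0$, survives each transformation. For $\tilde q(x) := (k-x)q(x)$ in Step~1 one has $\tilde q'(0) = -q(0) + k q'(0) = 0$, so Step~1 again yields $\Cimp(k,p+1) \le \max\bigl(0,\Cimp(k,p) - \frac{1}{k-1}\bigr)$ and hence strict decrease of $\Cimp(k,p)$ in $p$ until it reaches $0$. In Step~3 one multiplies by a quadratic $Q$ with distant complex-conjugate roots, and since $Q(0) \ne 0$ one has $\tilde q'(0) = Q'(0)q(0) + Q(0)q'(0) = 0$; in Steps~2 and~4 the transformations keep the bounded double root at the origin, so $q(0) = q'(0) = 0$ persists. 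Thus Step~2 shows $N(p) = p$ implies $\Cimp(k,p) = \Cimp(\infty,p)$ for $k$ beyond some $K_p$, Step~3 shows $N(p) = p$ implies $\Cimp(\infty,p) > \Cimp(\infty,p+2)$, and Step~4 shows $N(p) = N(p_0) = p_0$ with $p > p_0$ implies $\Cimp(\infty,p) = \Cimp(\infty,p_0)$, with the refinement $\Cimp(\infty,p-1) - \Cimp(k,p) \le \frac{1}{k-k_p}$ in the case $p_0 = p-1$.

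Finally I would run the induction exactly as before but driven by even parity. The base case is $N(2) = 2$, since $q(x) = x^2$ has its only (double) root at the origin and satisfies \eqref{dual:poly2:exp:a}--\eqref{dual:poly2:exp:d} with $r = 2$ for every $k$. An induction using Steps~3 and~4 then gives $N(p) = p$ for all even $p$, and Step~2 yields statement~(i). For odd $p$ we have $N(p) = p_0$ for some even $p_0 < p$ with $N(p_0) = p_0$; Step~4 gives $\Cimp(\infty,p) = \Cimp(\infty,p_0)$, and if $p_0 \ne p-1$ then Step~3 together with monotonicity of $\Cimp(\infty,\cdot)$ would force $\Cimp(\infty,p_0) > \Cimp(\infty,p)$, a contradiction; hence $p_0 = p-1$, which delivers the upper bound in \eqref{asympt:even:imp}. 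The lower bound follows from Step~1 together with statement~(i) applied to the even order $p-1$. The main obstacle is not conceptual: it is the careful check, at each transformation, that the double root at the origin (equivalently condition \eqref{dual:poly2:exp:d}) is preserved and that $N(p)$ is correctly identified as even; once these are in place, the argument is a faithful transcription of Theorem~\ref{thm:asympt:exp}.
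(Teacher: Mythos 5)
Your proposal is correct and takes essentially the same route as the paper, whose entire proof of this theorem is the remark that it is ``essentially the same as that of Theorem~\ref{thm:asympt:exp}''. Your transcription supplies exactly the details that remark leaves implicit: the parity of $N(p)$ flips to even (because Lemma~\ref{lem:dualpropx:imp} forces a double root at the origin and double interior integer roots, leaving only the simple root at $k$ for odd $p$ to escape any bounded region), and the extra condition \eqref{dual:poly2:exp:d} is preserved under each of the transformations in Steps~1--4, which is precisely what interchanges the roles of even and odd $p$ in statements (i) and (ii).
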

\begin{proof}
  The proof is essentially the same as that of Theorem~\ref{thm:asympt:exp}.
\end{proof}

The proofs of the following two theorems make use of the assumption that step
sizes are fixed.

\begin{thm} \label{thm:unique:exp}
  Let $p$ be an odd positive number, and let $K_p$ be the smallest positive
  integer such that $\Cexp(K_p, p) = \Cexp(\infty, p)$.
  Then for all $k \ge K_p$ there is a unique optimal explicit LM method with $k$
  steps, order of consistency $p$ and SSP coefficient $\Cexp(\infty, p)$,
  and for different values of $k$, the non-zero coefficients of the unique optimal
  methods coincide.
  Moreover, there is a finite sequence of positive integers $j_1, j_2, \ldots, j_{\frac{p-1}{2}}$,
  such that $1 < j_1 < j_1+1 < j_2 < j_2+1 < \ldots < j_{\frac{p-1}{2}} < j_{\frac{p-1}{2}}+1$,
  and that the coefficients of the optimal method satisfy $\delta_j=0$ for all $1 \le j \le k$,
  and that if $\beta_j \ne 0$ for some $1 \le j \le k$
  then $j \in \{1, j_1, j_1+1, \ldots, j_{\frac{p-1}{2}}, j_{\frac{p-1}{2}}+1\}$.
\end{thm}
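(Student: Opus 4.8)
The plan is to combine the asymptotic stabilization from Theorem~\ref{thm:asympt:exp} with the optimality/uniqueness machinery of Lemma~\ref{lem:dualoptimal:exp}, all organized around a single $k$-independent extremal polynomial. First I would invoke Theorem~\ref{thm:asympt:exp}\eqref{asympt:exp:s1}: for odd $p$ and $k \ge K_p$ we have $\Cexp(k,p) = \Cexp(\infty,p) =: r > 0$, and (from Step~2 of its proof, where $N(p) = p$) there is one polynomial $q$ of degree $p$ that satisfies \eqref{dual:poly2:exp:a}--\eqref{dual:poly2:exp:c} with this $r$ simultaneously for \emph{all} $k \ge K_p$, whose roots have real parts bounded by a fixed $L = L(p,p)$. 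Existence of an optimal method comes from Proposition~\ref{prop:optimal}\eqref{prop:optimal:s1}. I would then read off the root structure of $q$ from Lemma~\ref{lem:dualpropx:exp}: $q$ has degree exactly $p$, a simple root at $0$, and every other real root is an integer of multiplicity $2$ lying in $[1,L]$; since $L < k$ for $k \ge K_p$, $k$ itself is not a root, and $q$ is strictly positive on $[1,k]$ off its roots.

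The central object is the auxiliary polynomial $w(x) := r\,q(x) - q'(x)$, which has degree $p$ with leading coefficient $rc$ (the leading coefficient $c$ of $q$ being positive by property~\ref{prop6} of Lemma~\ref{lem:dualpropx:exp}). Condition \eqref{dual:poly2:exp:b} is exactly $w(j) \ge 0$ for $1 \le j \le k$, and $\mathcal{J} = \{j : w(j) = 0\}$, while $\mathcal{I}$ consists of the double integer roots of $q$ in $[1,k]$. Because $q(0)=0$ and $q'(0) = c\,h(0) > 0$ (where $q = c\,x\,h$ with $h(0)>0$), we get the sign fact $w(0) = -q'(0) < 0$. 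Next I would use Lemma~\ref{lem:dualoptimal:exp}\eqref{dualoptimal:exp:s4} to fix $q$ so that the active columns $\{a_j : j\in\mathcal{I}\}\cup\{b_j : j\in\mathcal{J}\}$ span a $p$-dimensional subspace of $\Real^{p+1}$. The key step is then to show $\mathcal{I} = \emptyset$: at a double root $n$ one has $w(n)=0$ and $w'(n) = -q''(n) < 0$, so $w$ crosses downward there and is negative just to the right; a parity count on the odd-degree $w$ (which runs from $-\infty$ through $w(0)<0$ to $+\infty$) together with the constraint $w \ge 0$ at every integer rules out such real double roots for odd $p$. With $\mathcal{I} = \emptyset$, statement \eqref{dualoptimal:exp:s1} forces $\delta_j = 0$ for all $j$, and since $\{b_j : j\in\mathcal{J}\}$ must still span a $p$-dimensional space while $|\mathcal{J}| \le \deg w = p$, we obtain $|\mathcal{J}| = p$; the $p$ spanning vectors are therefore linearly independent.

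With $|\mathcal{J}| = p$, all $p$ roots of $w$ are distinct integers in $[1,k]$, and I would deduce the combinatorial pattern from the sign data. Since $w(0) < 0 \le w(1)$, the smallest root is simple and, with $w$ forced non-negative on all of $\{1,\dots,k\}$ thereafter, it must sit at $j = 1$ and be isolated (there is no integer to its left where negativity would be forbidden). Each subsequent negative excursion of $w$ must be confined to an open unit interval $(j_i, j_i+1)$ bounded by two consecutive integer roots, giving the decomposition
\begin{equation*}
  \mathcal{J} = \{1\}\cup\{j_1, j_1+1, \ldots, j_{(p-1)/2}, j_{(p-1)/2}+1\}, \qquad 1 < j_1,\quad j_i+1 < j_{i+1},
\end{equation*}
which accounts for exactly $1 + (p-1) = p$ simple roots. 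Complementary slackness (Lemma~\ref{lem:dualoptimal:exp}\eqref{dualoptimal:exp:s2}) confines the support of $\beta$ to $\mathcal{J}$, yielding the stated form of the optimal method.

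Finally, uniqueness follows from Lemma~\ref{lem:dualoptimal:exp}\eqref{dualoptimal:exp:s3}, since the $p$ vectors $\{b_j : j\in\mathcal{J}\}$ are linearly independent; for each $k \ge K_p$ this forbids any second optimal method. Coincidence of the nonzero coefficients across different $k$ is then immediate: $q$, the set $\mathcal{J}$, and the integers $j_i$ are all independent of $k$ for $k \ge K_p$, and the order conditions \eqref{primal:oc1}--\eqref{primal:oc2} restricted to the fixed support $\{1, j_1, j_1+1, \dots\}$ contain no reference to $k$, so they determine the same $\beta_j$ for every admissible $k$. I expect the main obstacle to be the middle paragraph, specifically the parity argument eliminating real roots of $q$ (hence $\mathcal{I} = \emptyset$) and the interlacing/sign analysis of $w$ that pins its roots to $1$ together with consecutive integer pairs; the surrounding reductions and the final uniqueness bookkeeping should be routine given the earlier lemmas.
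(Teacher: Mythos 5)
Your skeleton matches the paper's: stabilize $r = \Cexp(\infty,p)$ via Theorem~\ref{thm:asympt:exp}, work with a $k$-independent dual polynomial, extract $\delta \equiv 0$ and the support of $\beta$ from the complementary-slackness statements of Lemma~\ref{lem:dualoptimal:exp}, pin down $\mathcal{J}$ by a sign analysis of $w := rq - q'$, and close with statements \eqref{dualoptimal:exp:s3}--\eqref{dualoptimal:exp:s4}. But the step you yourself flagged as the main obstacle --- ruling out integer roots of $q$ in $[1,k]$ by a parity count on $w$ --- is a genuine gap, and no counting or parity argument can close it. If $q$ has $t$ double integer roots $n_1 < \dots < n_t$ and $c$ conjugate pairs of non-real roots (which Lemma~\ref{lem:dualpropx:exp} does \emph{not} exclude: property~\ref{prop2} permits non-real roots with real part in $[1,k]$), then $p = 1 + 2t + 2c$ and $\deg w = p$ with positive leading coefficient. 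The sign data you collect --- $w(0) < 0$, a downward crossing at each $n_i$, and $w \ge 0$ at every integer in $[1,k]$ --- forces at least $t$ downward crossings and $t+1$ upward crossings, hence at least $2t+1$ real roots of $w$. Since $2t+1 \le p$ always holds, and since any such configuration automatically has the odd total parity that an odd-degree $w$ running from $-\infty$ to $+\infty$ requires, there is no contradiction; when $c \ge 1$ there is slack in the degree budget, and even when $c = 0$ the forced count equals $p$ exactly. So the conclusion $\mathcal{I} = \emptyset$ is true but cannot be reached this way.

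What the paper does instead is a global translation trick that exploits the saturation $\Cexp(K_p,p) = \Cexp(2K_p,p) = \Cexp(\infty,p) > 0$: it works at level $\kappa_p := 2K_p$ and shows that \emph{any} $q$ solving \eqref{dual:poly2:exp:a}--\eqref{dual:poly2:exp:c} at level $\kappa_p$ with $r = \Cexp(\infty,p)$ has no integer root $j_0 \in [1,\kappa_p]$. Indeed, if $j_0 > K_p$, then $q$ also solves the system at level $K_p$ but violates property~\ref{prop2} of Lemma~\ref{lem:dualpropx:exp}, whose ``moreover'' clause applies because $r = \Cexp(K_p,p) > 0$; if $j_0 \le K_p$, then the translate $x \mapsto q(x+j_0)$ still solves the system at level $K_p$ (its constraints sit inside $[1+j_0, K_p+j_0] \subseteq [1,\kappa_p]$) but has the negative root $-j_0$, again contradicting property~\ref{prop2}. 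The doubling $\kappa_p = 2K_p$ exists precisely to leave room for this translation. Your Step-2 polynomial satisfies the constraints at every integer $j \ge 1$, so the shift argument applies to it verbatim and would repair your proof. A secondary wrinkle you would also need to fix: after re-choosing $q$ via statement \eqref{dualoptimal:exp:s4} of Lemma~\ref{lem:dualoptimal:exp} at a fixed level $k$, the new polynomial need not inherit the Step-2 properties, so you must re-derive for it both $\mathcal{I} = \emptyset$ and validity for all $k' \ge K_p$; the paper does this by proving root-freeness for every solution at level $\kappa_p$ and then observing that, once all $p$ roots of $w$ lie in $\mathcal{J} \subseteq [1,\kappa_p]$ and $q > 0$ on $(0,\infty)$, the chosen $q$ satisfies the system for every $k$. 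As written, your closing claim that ``$q$, $\mathcal{J}$, and the $j_i$ are independent of $k$'' is unjustified for the modified polynomial, though uniqueness at each level $k$ (which you do get from statement \eqref{dualoptimal:exp:s3}) would suffice to conclude that the nonzero coefficients coincide across $k$.
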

\begin{proof}
  Suppose that $p$ and $K_p$ satisfy the conditions of the theorem.
  Theorem \ref{thm:asympt:exp} guarantees the existence of such a $K_p$ for all
  odd positive integers $p$.
  Let us set $\kappa_p := 2K_p$.
  Then $\Cexp(\kappa_p, p) = \Cexp(\infty, p) > 0$, and in view of Lemma \ref{lem:dualcond:exp}
  and property \ref{prop1} of Lemma \ref{lem:dualpropx:exp},
  there exists a $q$ polynomial of degree $p$ that satisfies the system
  \eqref{dual:poly2:exp:a}--\eqref{dual:poly2:exp:c} with $r = \Cexp(\infty, p)$
  and $k = \kappa_p$.

  First we show that $q$ satisfies the inequality \eqref{dual:poly2:exp:a} for all
  $1 \le j \le \kappa_p$ with strict inequality.
  Suppose to the contrary that $q(j_0) = 0$ for some positive integer $1 \le j_0 \le \kappa_p$.
  If $j_0 > K_p$ then $q$ satisfies the system \eqref{dual:poly2:exp:a}--\eqref{dual:poly2:exp:c}
  with $r = \Cexp(\infty, p) = \Cexp(K_p, p)$ and $k=K_p$, but does not
  satisfy property \ref{prop2} of Lemma \ref{lem:dualpropx:exp}, which is a contradiction.
  Similarly, if $j_0 \le K_p$ then the polynomial $\tilde{q}(x) := q(x-j_0)$
  satisfies the system \eqref{dual:poly2:exp:a}--\eqref{dual:poly2:exp:c}
  with $r = \Cexp(\infty, p) = \Cexp(K_p, p)$ and $k=K_p$, but fails to satisfy
  property \ref{prop2} of Lemma \ref{lem:dualpropx:exp}, which is again a contradiction.
  Hence none of the inequalities in \eqref{dual:poly2:exp:a} hold with equality.
  
  Since $-q'+\Cexp(\infty, p)q$ is a polynomial of degree $p$, it satisfies at
  most $p$ inequalities in \eqref{dual:poly2:exp:b} with equality.
  Because of statement \eqref{dualoptimal:exp:s4} of Lemma \ref{lem:dualoptimal:exp} and due to
  $q(j) > 0$ for all $1 \le j \le \kappa_p$, we can choose
  $q$ so that the number of binding inequalities in \eqref{dual:poly2:exp:b}
  is precisely $p$. Then $-q'(0)+\Cexp(\infty, p)q(0) = -q'(0) < 0$ because of
  properties \ref{prop5} and \ref{prop6} of Lemma \ref{lem:dualpropx:exp}.
  Let $\mathcal{J}$ and $b_j$ for $1 \le j \le \kappa_p$ be as in Lemma \ref{lem:dualoptimal:exp}.
  We can see that \eqref{dual:poly2:exp:b} holds for all $1 \le j \le \kappa_p$
  if and only if $\mathcal{J} = \{1, j_1, j_1+1, \ldots, j_{\frac{p-1}{2}}, j_{\frac{p-1}{2}}+1\}$
  for a sequence of integers $j_1, j_2, \ldots, j_{\frac{p-1}{2}}$
  satisfying $1 < j_1 < j_1+1 < \ldots < j_{\frac{p-1}{2}} < j_{\frac{p-1}{2}} + 1 \le \kappa_p$.
  Now property \ref{prop2} of Lemma \ref{lem:dualpropx:exp} implies that $q(j) > 0$ for
  all $j > \kappa_p$. We can see that $-q'(j)+\Cexp(\infty, p)q(j) > 0$ for all
  $j > \kappa_p$. This is because the set $\mathcal{J}$ contains all the $p$ roots of
  $-q'+\Cexp(\infty, p)q$.
  It follows that $q$ satisfies the system \eqref{dual:poly2:exp:a}--\eqref{dual:poly2:exp:c}
  with $r=\Cexp(\infty, p)$ for all positive $k$.

  Suppose that $\delta_j$ and $\beta_j$ for $1 \le j \le k$ are the coefficients
  of an explicit LM method with $k \ge K_p$ steps, order of consistency $p$, and SSP coefficient $\Cexp(\infty,p)$.
  Proposition \ref{prop:optimal} guarantees the existence of such a method for all $k \ge K_p$.
  Statement \eqref{dualoptimal:exp:s1} of Lemma \ref{lem:dualoptimal:exp} implies that $\delta_j = 0$ for all $1 \le j \le k$,
  and statement \eqref{dualoptimal:exp:s2} of Lemma \ref{lem:dualoptimal:exp} implies that if $\beta_j = 0$ for some
  $1 \le j \le k$, then $j \in \mathcal{J}$.
  Since $b_j$ for $j \in \mathcal{J}$ are linearly independent (by statement \eqref{dualoptimal:exp:s4} of Lemma \ref{lem:dualoptimal:exp} and our choice of $q$),
  statement \eqref{dualoptimal:exp:s3} of Lemma \ref{lem:dualoptimal:exp} implies that there is a unique optimal method
  with SSP coefficient $\Cexp(\infty,p)$ for all $k \ge K_p$.
  The possibly non-zero coefficients of this unique optimal method, namely $\beta_j$ for $j \in \mathcal{J}$,
  satisfy the same linear system for all $k \ge K_p$. Thus for different values of $k$,
  the non-zero coefficients of the unique optimal methods coincide.
\end{proof}
\begin{thm} \label{thm:unique:imp}
  Let $p$ be an even positive number, and let $K_p$ be the smallest positive
  integer such that $\Cimp(K_p, p) = \Cimp(\infty, p)$.
  Then for all $k \ge K_p$ there is a unique optimal implicit LM method with $k$
  steps, order of consistency $p$ and SSP coefficient $\Cimp(\infty, p)$,
  and for different values of $k$, the non-zero coefficients of the unique optimal
  methods coincide.
  Moreover, there is a finite sequence of positive integers $j_1, j_2, \ldots, j_{\frac{p-2}{2}}$,
  such that $1 < j_1 < j_1+1 < j_2 < j_2+1 < \ldots < j_{\frac{p-2}{2}} < j_{\frac{p-2}{2}}+1$,
  and that the coefficients of the optimal method satisfy $\delta_j=0$ for all $1 \le j \le k$,
  and that if $\beta_j \ne 0$ for some $0 \le j \le k$
  then $j \in \{0, 1, j_1, j_1+1, \ldots, j_{\frac{p-2}{2}}, j_{\frac{p-2}{2}}+1\}$.
\end{thm}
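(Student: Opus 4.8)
The plan is to mirror the proof of Theorem~\ref{thm:unique:exp}, replacing each explicit tool by its implicit counterpart: Theorem~\ref{thm:asympt:imp} for the existence of $K_p$, Lemma~\ref{lem:dualcond:imp} and Lemma~\ref{lem:dualpropx:imp} for the dual polynomial, and Lemma~\ref{lem:dualoptimal:imp} for the optimality and uniqueness bookkeeping. Since $p$ is even, the first part of Theorem~\ref{thm:asympt:imp} guarantees a smallest $K_p$ with $\Cimp(K_p,p)=\Cimp(\infty,p)>0$. I would set $\kappa_p:=2K_p$, so that $\Cimp(\kappa_p,p)=\Cimp(\infty,p)$, and invoke Lemma~\ref{lem:dualcond:imp} together with the first property of Lemma~\ref{lem:dualpropx:imp} to fix a real polynomial $q$ of degree exactly $p$ solving \eqref{dual:poly2:exp:a}--\eqref{dual:poly2:exp:d} with $r=\Cimp(\infty,p)$ and $k=\kappa_p$. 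By Lemma~\ref{lem:dualpropx:imp} this $q$ has only integer real roots, confined to $\{0\}\cup[1,\kappa_p]$, with $0$ a \emph{double} root and every interior integer root double.

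First I would prove that $q(j)>0$ strictly for $1\le j\le\kappa_p$, by contradiction. If $q(j_0)=0$ with $j_0>K_p$, then $q$ already solves the system with $k=K_p$ and $r=\Cimp(K_p,p)$, so the ``moreover'' clause of Lemma~\ref{lem:dualpropx:imp} forces every real root other than $0$ into $[1,K_p]$, contradicting $j_0>K_p$. If instead $1\le j_0\le K_p$, then $j_0$ is an interior integer root and hence a double root, so $q'(j_0)=0$; the shift $\tilde q(x):=q(x+j_0)$ then satisfies $\tilde q(0)=q(j_0)=0$ and $\tilde q'(0)=q'(j_0)=0$, and since $2\le j+j_0\le\kappa_p$ for $1\le j\le K_p$ it solves \eqref{dual:poly2:exp:a}--\eqref{dual:poly2:exp:d} with $k=K_p$ and $r=\Cimp(K_p,p)$. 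But the double root $0$ of $q$ becomes a root $-j_0<0$ of $\tilde q$, again violating the ``moreover'' clause. It is precisely the double (rather than simple) root at $0$ that lets the shift $\tilde q(x)=q(x+j_0)$ preserve both \eqref{dual:poly2:exp:c} and \eqref{dual:poly2:exp:d}; keeping this straight is the first delicate point. Consequently $\mathcal I=\{j:1\le j\le\kappa_p,\,q(j)=0\}=\varnothing$ for every optimal $q$ at level $\kappa_p$.

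Next I would use statement (iv) of Lemma~\ref{lem:dualoptimal:imp} to choose $q$ so that the vectors $b_j$, $j\in\mathcal J$, span a $p$-dimensional space; as $g:=-q'+\Cimp(\infty,p)\,q$ has degree $p$, this forces $|\mathcal J|=p$ with $g$ having $p$ simple integer roots. The main obstacle is then to pin down the shape of $\mathcal J$. Because $0$ is a double root of $q$ we get $g(0)=-q'(0)=0$, so $0\in\mathcal J$; and since $q(x)\sim c_2x^2$ near $0$ with $c_2>0$, we have $g(x)<0$ just right of $0$, so the requirement $g(1)\ge0$ combined with integrality of the roots of $g$ forces $g(1)=0$, i.e. $1\in\mathcal J$. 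The leading coefficient of $g$ is positive, so a sign-alternation argument shows the remaining $p-2$ simple integer roots must occur in adjacent pairs $\{j_i,j_i+1\}$ (else $g$ would be negative at an intermediate integer, contradicting \eqref{dual:poly2:exp:b}), giving $\mathcal J=\{0,1,j_1,j_1+1,\dots,j_{(p-2)/2},j_{(p-2)/2}+1\}$ with $1<j_1<j_1+1<j_2<\cdots$. The genuinely new feature here, absent from the explicit case, is the forced membership $0\in\mathcal J$, which is what permits $\beta_0\ne0$ as befits an implicit method. Property~6 and the evenness of $p$ give $q>0$ beyond $\kappa_p$, and as all $p$ roots of $g$ lie in $[0,\kappa_p]$ and $g$ inherits the positive leading coefficient of $q$, also $g>0$ there; hence $q$ solves \eqref{dual:poly2:exp:a}--\eqref{dual:poly2:exp:d} for \emph{every} positive $k$.

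Finally, for each $k\ge K_p$ we have $\Cimp(k,p)=\Cimp(\infty,p)$, so Proposition~\ref{prop:optimal} supplies an optimal implicit method with $\C=\Cimp(\infty,p)$. Applying Lemma~\ref{lem:dualoptimal:imp} with this $q$ (whose sets are $\mathcal I_k=\varnothing$ and $\mathcal J_k=\mathcal J\cap[0,k]$), statements (i) and (ii) give $\delta_j=0$ for $1\le j\le k$ and confine the support of $(\beta_j)$ to $\mathcal J_k$, while linear independence of $\{b_j:j\in\mathcal J\}$ and statement (iii) yield uniqueness and the claimed support set. For the coincidence across $k$, I would note that the existence of a $K_p$-step optimal method places $a_0$ in the span of $\{b_j:j\in\mathcal J\cap[0,K_p]\}$; by linear independence the unique solution of $\sum_{j\in\mathcal J_k}\beta_j b_j=a_0$ then has all nonzero entries on this fixed index set and solves the same linear system for every $k\ge K_p$, so the nonzero coefficients agree. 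The closing statement of Theorem~\ref{thm:unique:imp} records this description of the support.
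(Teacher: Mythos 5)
Your proposal is correct and takes exactly the route the paper intends: the paper's own proof of Theorem~\ref{thm:unique:imp} is simply ``essentially the same as for Theorem~\ref{thm:unique:exp},'' and you have carried out that mirroring faithfully, substituting Theorem~\ref{thm:asympt:imp}, Lemmas~\ref{lem:dualcond:imp}, \ref{lem:dualpropx:imp} and \ref{lem:dualoptimal:imp} for their explicit counterparts. The implicit-specific adjustments you flag --- the double root at $0$ making the shift $q(x+j_0)$ preserve both \eqref{dual:poly2:exp:c} and \eqref{dual:poly2:exp:d}, the forced membership $0\in\mathcal{J}$ via $g(0)=-q'(0)=0$, and the resulting support set $\{0,1,j_1,j_1+1,\dots,j_{\frac{p-2}{2}},j_{\frac{p-2}{2}}+1\}$ --- are precisely the modifications the paper's one-line proof leaves to the reader.
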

\begin{proof}
  The proof is essentially the same as for Theorem \ref{thm:unique:exp}.
\end{proof}

\bibliographystyle{alpha}
\bibliography{ssp-lmm}
\end{document}